\newtheorem{theorem}{Theorem}[section]
\newtheorem{lemma}[theorem]{Lemma}
\newtheorem{corollary}[theorem]{Corollary}
\newtheorem{question}[theorem]{Question}
\newcommand{\ty}{\nabla\mathrm{Y}}
\newcommand{\yt}{\mathrm{Y}\nabla}
\newcommand{\DD}{\mathcal{D}_4}
\newcommand{\F}{\mathcal{F}}
\newcommand{\PP}{\mathcal{P}}
\newcommand{\R}{\mathbb{R}}
\begin{document}
	
\title{Dips at small sizes for topological graph obstruction sets}
	
\author[H.\ Kim]{Hyoungjun Kim}
\address{Institute of Data Science, Korea University, Seoul 02841, Korea}
\email{kimhjun@korea.ac.kr}
\author[T.W.\ Mattman]{Thomas W.\ Mattman}
\address{Department of Mathematics and Statistics,
California State University, Chico,
Chico, CA 95929-0525}
\email{TMattman@CSUChico.edu}

\begin{abstract}
The Graph Minor Theorem of Robertson and Seymour implies 
a finite set of obstructions for
any minor closed graph property. 
We show that there are only three obstructions
to knotless embedding of size 23, which is far fewer than the 92 of size 22 and the hundreds known to exist at larger sizes.
We describe several other topological properties whose obstruction set
demonstrates a similar dip at small size. For
order ten graphs, we classify the 35 obstructions to knotless embedding and the 49 maximal knotless graphs.
\end{abstract}

\thanks{The first author(Hyoungjun Kim) was supported by the National Research
Foundation of Korea (NRF) grant funded by the Korea government Ministry of Science 
and ICT(NRF-2021R1C1C1012299 and NRF-2022M3J6A1063595 ).}

\maketitle

\section{Introduction}
	
The Graph Minor Theorem of Robertson and Seymour~\cite{RS} implies that 
any minor closed graph property $\PP$ is characterized by a finite
set of obstructions. For example, planarity is
determined by $K_5$ and $K_{3,3}$ \cite{K,W} while 
linkless embeddability has
seven obstructions, known as the Petersen family~\cite{RST}.
However, Robertson and Seymour's proof is highly non-constructive and it remains frustratingly difficult to identify the obstruction set, even for a simple property such as apex (see~\cite{JK}).	
Although we know that the obstruction set for a property $\PP$ is finite, in practice it is often difficult to establish any useful bounds on its size. 
	
In the absence of concrete bounds,
information about the shape or distribution of an 
obstruction set would be welcome. Given that the number of obstructions is finite, one might anticipate a unimodal distribution with a central maximum and numbers tailing off to either side.
Indeed, many of the known obstruction sets do appear to follow this
pattern. In \cite[Table 2]{MW}, the authors present a listing of more than 17 thousand obstructions for torus embeddings. Although, the list is likely incomplete, it does appear to follow a normal
distribution both with respect to graph {\em order} (number of vertices)
and graph {\em size} (number of edges), see 
Figure~\ref{fig:TorObs}.
	
\begin{figure}[htb]
\centering
\subfloat[By order]{\includegraphics[width=0.4\textwidth]{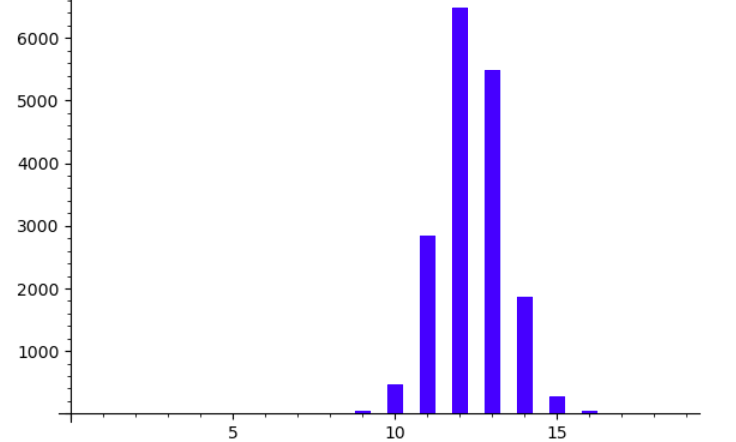}}
\hfill
\subfloat[By size]{\includegraphics[width=0.4\textwidth]{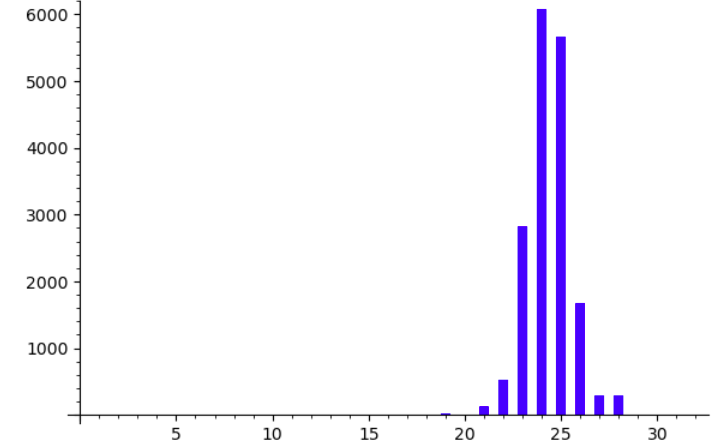}}
\caption{Distribution of torus embedding obstructions}
\label{fig:TorObs}
\end{figure}

\begin{table}[ht]
\centering
\begin{tabular}{l|ccccccccccc}
Size & 18 & 19 & 20 & 21 & 22 & 23 & $\cdots$ & 28 & 29 & 30 & 31 \\ \hline
Count & 6 & 19 & 8 & 123 & 517 & 2821 & $\cdots$ & 299 & 8 & 4 & 1 \end{tabular}
\caption{Count of torus embedding obstructions by size.}
\label{tab:TorSiz}
\end{table}
	
However, closer inspection (see Table~\ref{tab:TorSiz}) shows
that there is a {\em dip}, or local minimum, in the number of obstructions at size twenty.
We will say that the dip occurs at
{\em small size} meaning it is near the end of the left tail
of the size distribution.

In this paper, we will see that the knotless embedding property likewise
has a dip at size 23. 
A {\em knotless embedding} of a graph is an embedding in $\R^3$ such
that each cycle is a trivial knot. 
Having noticed this dip we investigated what
other topological properties have a dip, or even {\em gap} (a size,
or range of sizes, for which there is no obstruction), at small sizes.
We report on what we found in the next section. In a word, the most
prominent dips and gaps
seem to trace back to that perennial counterexample, the 
Petersen graph. 
	
In Section 3, we prove the following.
\begin{theorem} \label{thm:3MM}
There are exactly three obstructions to knotless embedding of size 23.
\end{theorem}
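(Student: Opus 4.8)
The plan is to turn the classification into a finite, ultimately computer-checkable search by first bounding the order of any size-$23$ obstruction. Every obstruction (a minor-minimal intrinsically knotted graph) has minimum degree at least $3$, since a vertex of degree $0$ or $1$ may be deleted and a degree-$2$ vertex suppressed, each a minor operation preserving intrinsic knottedness and so contradicting minimality. With $|E| = 23$ the handshake inequality $3n \le 2 \cdot 23 = 46$ forces $n \le 15$, while $\binom{7}{2} = 21 < 23$ forces $n \ge 8$. Thus every candidate has order $8 \le n \le 15$, and I would begin by using the known connectivity and degree-sequence constraints on obstructions to prune the admissible degree sequences before any enumeration.

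Next I would organize the enumeration around the $\ty$ and $\yt$ moves. A $\ty$ move trades a triangle for a claw, adding one vertex without changing the edge count, and a $\yt$ move is its inverse; crucially, a $\ty$ move is known to carry minor-minimal intrinsically knotted graphs to minor-minimal intrinsically knotted graphs. Hence the size-$23$ obstructions fall into $\ty$-families, each generated from a low-order representative by $\ty$ moves that raise the order one step at a time, up to order $15$. This replaces a hopeless search over all $23$-edge graphs by the far smaller task of locating the low-order seeds and regenerating their families, and I would still sweep every order $8 \le n \le 15$ to guarantee completeness.

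The decisive filter is the completeness of the obstruction sets at the smaller sizes surveyed earlier: the fourteen graphs of size $21$ and the ninety-two of size $22$. A size-$23$ obstruction can contain none of these $106$ graphs as a proper minor, for otherwise it would fail to be minor-minimal; yet it must itself be intrinsically knotted. For each surviving candidate $G$ I would therefore certify intrinsic knottedness --- either by displaying a known intrinsically knotted minor, or, for a genuine obstruction (which by definition contains none), by a self-contained Conway--Gordon-style parity computation over the cycles of $G$ --- and certify minor-minimality by checking that every single-edge deletion $G - e$ and contraction $G / e$ is \emph{not} intrinsically knotted, each such non-knottedness witnessed by an explicit knotless embedding. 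What survives the entire sweep should be exactly the three asserted graphs.

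The main obstacle is not the enumeration but the knottedness certificates. Proving that a graph is not intrinsically knotted means exhibiting an honest knotless embedding for every relevant minor, and proving that the three survivors \emph{are} intrinsically knotted cannot be delegated to a smaller obstruction, since they contain none; this forces a direct argument that every spatial embedding of each one carries a knotted cycle. Controlling the number of embeddings and minor tests so the computation remains feasible, and relying on the completeness of the size-$\le 22$ input lists on which the whole filtering depends, is where the real work lies.
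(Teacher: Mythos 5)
Your outer skeleton is logically sound: given the completeness of the size-21 and size-22 obstruction lists, a size-23 graph is an obstruction if and only if it is IK and contains none of those 106 graphs as a proper minor, and your degree/order bounds ($\delta \geq 3$, hence $8 \leq n \leq 15$) are correct. But there are two genuine problems. First, your ``crucial'' organizing lemma is false: it is \emph{not} known (and is not true) that a $\ty$ move carries MMIK graphs to MMIK graphs. Only the reverse implication holds (the paper's Lemma~\ref{lem:MMIK}: if $G$ is IK and a $\ty$ child $H$ of $G$ is MMIK, then $G$ is MMIK). Indeed, the $E_9+e$ family consists of 110 graphs, all IK, of which only 33 are MMIK; since by the true (backward) lemma the MMIK members of that family are closed under passing to parents, if they were also closed under passing to children, connectivity of the family under $\ty/\yt$ moves would force all 110 to be MMIK, a contradiction. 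So obstructions cannot be generated as ``$\ty$-descendants of low-order seeds.'' Your fallback of sweeping every order $8 \leq n \leq 15$ rescues completeness, but then the family structure buys nothing and you are left with brute force over all size-23 graphs with $\delta \geq 3$.

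That brute force is where the decisive gap lies: you have no workable method to settle IK versus nIK for the survivors of your filter. Excluding the 106 known smaller obstructions as minors still leaves an enormous number of graphs, almost all of them nIK, and your only certificate of nIK is an explicit knotless embedding --- for which there is no algorithm, and each of which requires verifying that every cycle is unknotted. Likewise there is no general decision procedure for IK; Conway--Gordon-style parity arguments must be crafted graph by graph (the paper needs many pages to do this for just $G_1$ and $G_2$). The missing idea, which is the engine of the paper's proof, is a cheap sufficient condition for nIK: 2-apex implies nIK (Lemma~\ref{lem:2apex}), combined with the classification of minor minimal non-2-apex (MMN2A) graphs through size 23 from~\cite{MP}. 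Since a size-23 MMIK graph cannot be 2-apex, it must contain an MMN2A graph as a minor; since it cannot properly contain an MMIK graph, that minor must be one of only eight MMN2A graphs that are not MMIK (six nIK Heawood family graphs of size 21 and the two 4-regular graphs $H_1$, $H_2$ of size 22). This collapses the entire search to size-23 expansions of eight specific graphs, makes nIK certification essentially free (a polynomial 2-apex test) for nearly all candidates, and leaves only a handful of graphs requiring explicit knotless embeddings or direct IK arguments. Without this reduction, or something playing the same role, your proposal cannot be carried out.
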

Since there are no obstructions of size 20 or 
less, 14 of size 21, 92 of size 22 and at least 156 of size 28 (see \cite{FMMNN, GMN}),
the theorem shows that the knotless embedding obstruction set has a dip at small size, 23. 

The proof of Theorem~\ref{thm:3MM} naturally breaks into two parts.
We show ``by hand'' that the three obstruction graphs have no
knotless embedding and are minor minimal for that property.  
To show these three are the {\bf only} obstructions of size 23 we make use
of the connection with 2-apex graphs. 

\begin{figure}[htb]
\centering
\includegraphics[scale=1]{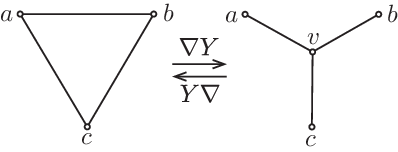}
\caption{$\ty$ and $\yt$ moves.} 
\label{fig:TY}
\end{figure}

This second part of the argument 
is novel. While our analysis depends on computer calculations,
the resulting observations may be of independent interest.
To describe these, we 
review some terminology for graph families, see~\cite{GMN}.
The {\em family} of graph $G$ is
the set of graphs related to $G$ by a sequence of zero or 
more $\ty$ and $\yt$
moves, see Figure~\ref{fig:TY}. The graphs
in $G$'s family are {\em cousins} of $G$. We do not allow $\yt$
moves that would result in doubled edges and all cousins
have the same size. If a $\ty$ move on $G$ results in graph $H$, we 
say $H$ is a {\em child} of $G$ and $G$ is a {\em parent} of $H$. 
The set of graphs that can be obtained from $G$ by a sequence of $\ty$
moves are $G$'s {\em descendants}. Similarly, the set of graphs
that can be obtained from $G$ by a sequence of $\yt$ moves
are $G$'s {\em ancestors}.

To show that the three graphs are the only obstructions of size 23 
relies on a careful
analysis of certain graph families
with respect to knotless embedding.
This analysis includes progress in resolving the following question.
\begin{question}
\label{que:d3MMIK}
If $G$ has a vertex of degree less than three, can an ancestor or descendant of $G$ be an obstruction
for knotless embedding?
\end{question}

It has been fruitful to search in graph families for obstructions to 
knotless embedding. For example, 
of the 264 known obstructions described in \cite{FMMNN},
all but three occur as part of four graph families. The same paper states
``It is natural to investigate the graphs obtained by adding one edge to
each of ... six graphs'' in the family of the Heawood graph. We carry
out this investigation and classify, with respect to knotless embedding, the graphs obtained 
by adding an edge to a Heawood family graph;
see Section 3 for details. 
As a first step toward a more general strategy for this type of problem,
we make the following connections between sets of graphs
of the form $G+e$ obtained by adding an edge to graph $G$.
\begin{theorem} 
\label{thm:Gpe}
If $G$ is a parent of $H$, then every $G+e$ has a cousin that is an $H+e$.
\end{theorem}

\begin{corollary}
\label{cor:Gpe}
If $G$ is an ancestor of $H$, then every $G+e$ has a cousin that is an $H+e$.
\end{corollary}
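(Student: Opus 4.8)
The plan is to induct on the number of $\ty$ moves relating $G$ to $H$, using the preceding theorem as the single-move building block. Since $G$ is an ancestor of $H$, by definition $G$ is obtained from $H$ by a sequence of $\yt$ moves; reversing this sequence exhibits $H$ as a descendant of $G$, so there is a chain $G = G_0, G_1, \ldots, G_k = H$ in which each $G_{i+1}$ is a child of $G_i$ under a single $\ty$ move. The induction is on $k$. The base case $k = 0$ is trivial, since then $H = G$ and every $G+e$ is literally an $H+e$ and a cousin of itself via the empty sequence of moves, while the case $k = 1$ is exactly the preceding theorem.

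For the inductive step, I would assume the result for chains of length $k$ and consider a chain of length $k+1$, factored as a single $\ty$ move $G = G_0 \to G_1$ followed by a length-$k$ chain $G_1 \to \cdots \to G_k = H$. Fixing an arbitrary $G+e$, say $e = xy$, the preceding theorem applied to the parent--child pair $(G, G_1)$ produces a cousin of $G+e$ of the form $G_1 + e$. Because $G_1$ is an ancestor of $H$ via the remaining length-$k$ chain, the induction hypothesis applied to this $G_1 + e$ yields a cousin of it that is an $H+e$. It then remains only to observe that being cousins is transitive, indeed an equivalence relation, since it means lying in a common family, the latter being defined as the class of graphs joined by sequences of $\ty$ and $\yt$ moves. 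Hence the $H+e$ just produced is a cousin of the original $G+e$, completing the step.

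The one point requiring care --- which I expect to be routine rather than a genuine obstacle --- is that the added edge $e = xy$ must remain meaningful along the entire chain. This holds because each $\ty$ move only introduces a new degree-three vertex and never deletes an existing one, so $V(G) \subseteq V(G_i)$ for every $i$ and in particular $x, y \in V(H)$; thus the \emph{same} edge $e$ may be adjoined at every stage, and the graphs $G_1 + e, \ldots, H+e$ appearing above all refer to this fixed $e$. With this observation in hand the induction runs verbatim, and the corollary follows.
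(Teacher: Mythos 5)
Your proof is correct and is essentially the argument the paper intends: the paper states this corollary without proof, as the immediate iteration of the preceding parent--child theorem, and your induction (together with the observations that the cousin relation is transitive and that $\ty$ moves never delete vertices, so the same edge $e$ persists along the chain) simply makes that implicit iteration explicit.
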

	
In Section 4, we prove the following.
\begin{theorem}
\label{thm:ord10}
There are exactly 35 obstructions to knotless embedding of order ten.
\end{theorem}
This depends on a classification of the maximal knotless graphs of order ten,
that is the graphs that are edge maximal in the set of graphs
that admit a knotless embedding, see~\cite{EFM}. In Appendix~\ref{sec:appmnik} we show
that there are 49 maximal knotless graphs of order ten.

In contrast to graph size, distributions with respect to graph order
generally do not have dips or gaps. In particular, Theorem~\ref{thm:ord10} continues an increasing trend of
no obstructions of order 6 or less,
one obstruction of order 7~\cite{CG}, two of order 8~\cite{CMOPRW,BBFFHL}, and eight of order 9~\cite{MMR}.

In the next section we discuss some graph properties for which we 
know something about the obstruction set, with a focus on those
that have a dip at small size. In Sections 3 and 4, we prove 
Theorems~\ref{thm:3MM} and \ref{thm:ord10}, respectively. 
Appendix A is a traditional
proof that the graphs $G_1$ and $G_2$ are IK. In Appendix B 
we describe the 49 maxnik graphs of order ten. Finally, Appendix 
C gives graph6~\cite{sage} notation for the important graphs
and further details of arguments throughout the paper including
the structure of the large families that occur at the end of subsection~\ref{sec:Heawood}.
	
\section{Dips at small size}
	
As mentioned in the introduction, it remains difficult to determine the
obstruction set even for simple graph properties. In this 
section we survey some topological graph properties for which we 
know something about the obstruction set. 

We begin by focusing on four properties that feature a prominent 
dip or gap at small sizes.
Two are the obstructions to knotless and torus embeddings mentioned in the 
introduction. Although the list of torus obstructions is likely incomplete
we can be confident about the dip at size 20. Like all of the incomplete 
sets we look at, research has focused on smaller sizes such that 
data on this side of the distribution is (nearly) complete. In the specific case
of torus obstructions, we can compare with a 2002 study~\cite{C} that listed 16,682
torus obstructions. Of the close to one thousand graphs added to the set 
in the intervening decade and a half, only three are of size 23 or smaller.

Similarly, while our proof that there are three obstructions for knotless
embedding depends on computer verification, it seems certain that the
number of obstructions at size 23 is far smaller than the 92 of size 22 and the 
number for size 28, which is known to be at least 156~\cite{GMN}. 

	\begin{table}
	    \centering
	    \begin{tabular}{l|ccccccc}
	        Size & 15 & 16 & 17 & 18 & 19 & 20 & 21 \\ \hline
	        Count & 7 & 0 & 0 & 4 & 5 & 22 & 33  
	    \end{tabular}
	    \caption{Count of apex obstructions through size 21.}
	    \label{tab:MMNASiz}
	\end{table}
	
The set of apex obstructions, investigated by  \cite{JK,LMMPRTW,P},
suggests one possible source for these dips. 
A graph is {\em apex} if it is planar, or becomes
planar on deletion of a single vertex.
As yet, we do not have a complete listing of the apex obstruction
set, but Jobson and K{\'e}zdy~\cite{JK} report that there
are at least 401 obstructions. Table~\ref{tab:MMNASiz} shows the 
classification of obstructions through size 21 obtained by Pierce~\cite{P} in his senior thesis. There is a noticeable gap at sizes 16 and 17.
The seven graphs of size 15 are the graphs in the Petersen family. This 
is the family of the Petersen graph, which is also the obstruction
set for linkless embedding. Note that, as for all our tables of size 
distributions, the table begins with what is known to be the smallest
size in the distribution, in this case 15.

Our proof that there are only three knotless embeddding obstructions
of size 23 depends on a related property, $2$-apex.
We say that a graph is {\em 2-apex} if it is apex, or becomes apex on deletion of a single vertex. Table~\ref{tab:MMN2ASiz} shows the little that we know about the obstruction set for this family \cite{MP}.
Aside from the counts for sizes 21 and 22 and the gap at size 23, we know only that there are obstructions for each size from 24 through 30.
	
	\begin{table}
	    \centering
	    \begin{tabular}{l|ccc}
	        Size & 21 & 22 & 23  \\ \hline
	        Count & 20 & 60 & 0  
	    \end{tabular}
	    \caption{Count of 2-apex obstructions through size 23.}
	    \label{tab:MMN2ASiz}
	\end{table}

While it is not an explanation, these four properties with notable dips or 
gaps are related to one another and seem to stem from the Petersen graph,
a notorious  counterexample in graph theory.
The most noticeable gap is 
for the apex property and the seven graphs to the left of the gap are precisely 
the Petersen family. The gap at size 23 for the $2$-apex property is doubtless
related. In turn, our proof of Theorem~\ref{thm:3MM} in Section 3 relies
heavily on the strong connection between $2$-apex and knotless graphs.
For this reason, it is not surprising that the gap at size 23 for
$2$-apex obstructions results in a similar dip at size 23 
for obstructions for knotless embeddings.

The connection with the 
dip at size 20 for torus embeddings is not as direct,  
but we remark that eight of the obstructions of size 19 have a
minor that is either a graph in the Petersen family, or else
one of those seven graphs with a single edge deleted.

In contrast, let us briefly mention some well known obstruction sets that
do not have dips or gaps and are instead unimodal. 
There are two obstructions to planarity, one each of size nine ($K_{3,3}$) and ten ($K_5$). 
The two obstructions, $K_4$ and $K_{3,2}$, to outerplanarity both have size six and the seven obstructions to linkless embedding in the 
Petersen family~\cite{RST} are all of size 15. 

Aside from planarity and linkless embedding,
the most famous set is likely the 35 obstructions to 
projective planar embedding~\cite{A,GHW,MT}.
Table~\ref{tab:PPSiz} shows that the size distribution for these 
obstructions is unimodal.
	
	\begin{table}
	    \centering
	    \begin{tabular}{l|cccccc}
	        Size & 15 & 16 & 17 & 18 & 19 & 20  \\ \hline
	        Count & 4 & 7 & 10 & 10 & 2 & 2   
	    \end{tabular}
	    \caption{Count of projective planar obstructions by size.}
	    \label{tab:PPSiz}
	\end{table}

    
	
\section{Knotless embedding obstructions of size 23}
    
In this section we prove Theorem~\ref{thm:3MM}: 
there are exactly three obstructions 
to knotless embedding of size 23. 
Along the way (see subsection~\ref{sec:Heawood}) we provide evidence in support of a negative answer to
Question~\ref{que:d3MMIK} and prove Theorem~\ref{thm:Gpe} and its
corollary. We also classify, with respect to knotless embedding, three
graph families of size 22. These families include every graph
obtained by adding an edge to a Heawood family graph.

We begin with some terminology.
A graph that admits no knotless embedding is {\em intrinsically knotted (IK)}. In contrast, we will call the graphs that admit a knotless 
embedding {\em not intrinsically knotted (nIK)}. If $G$ is
in the obstruction set for knotless embedding we will say 
$G$ is {\em minor minimal intrinsically knotted (MMIK)}. 
This reflects that, while $G$ is IK, no proper minor
of $G$ has that property. Similarly, we will call 2-apex obstructions {\em minor minimal not 2-apex (MMN2A)}.
    
Our strategy for classifying MMIK graphs of size 23 is based on
the following observation.

\begin{lemma} \cite{BBFFHL,OT} \label{lem:2apex}
If $G$ is 2-apex, then $G$ is not IK
\end{lemma}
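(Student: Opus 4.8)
The plan is to construct an explicit knotless embedding of $G$ directly from the planar structure supplied by the 2-apex hypothesis. Since $G$ is 2-apex, there are vertices $a,b$ (one may suffice if $G$ is apex) with $H=G-\{a,b\}$ planar. First I would fix a planar embedding of $H$ in the plane $P=\{z=0\}\subset\R^3$, place $a$ at a point with $z>0$ and $b$ at a point with $z<0$, route every edge incident to $a$ as a straight arc in the closed upper half-space $U=\{z\ge 0\}$ and every edge incident to $b$ in the lower half-space $L=\{z\le 0\}$, and, if the edge $ab$ is present, route it so that it meets $P$ in a single point lying in the unbounded face of $H$. I would then show that in this embedding every cycle of $G$ is a trivial knot, so that $G$ admits a knotless embedding and is therefore nIK.

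The first step is to record a structural feature of this embedding. Let $C$ be any cycle of $G$. Because $a$ is the only vertex in the open upper half-space and $C$ meets $a$ in at most two edges, the intersection $\alpha:=C\cap\{z>0\}$ is a single (possibly empty) open arc whose closure has both endpoints on $P$; symmetrically $\beta:=C\cap\{z<0\}$ is a single such arc through $b$. When $C$ uses the edge $ab$ I first subdivide that edge at its unique crossing with $P$, so that the same statement holds. Everything else of $C$ then lies in $P$.

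The heart of the argument is to prove that $C$ is ambient isotopic to a planar simple closed curve, which is necessarily unknotted. I would push $\alpha$ down into $P$ rel its endpoints: since $\alpha$ is the only part of $C$ in the open upper half-space, it may be moved freely there, and it can be laid onto an arc of $P$ provided that arc avoids the portion of $C$ already lying in $P$. That portion is a finite disjoint union of arcs, which does not separate the plane, so a suitable target arc exists. After this move $C$ lies entirely in $L$, and the same reasoning pushes $\beta$ up into $P$; by this stage the in-plane part of $C$ has become a single arc, which again fails to separate $P$. The outcome is a simple closed curve contained in $P$, hence a trivial knot. Cycles that miss $a$ or $b$ are covered by the corresponding degenerate cases of this argument.

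The step I expect to require the most care is the isotopy itself, namely verifying that a half-space arc can always be brought into the plane without meeting the in-plane part of $C$. This rests on the elementary but essential planar fact that a finite disjoint union of arcs does not separate $\R^2$, together with the bookkeeping needed to check that the relevant in-plane portion really is such a union at each stage. A secondary subtlety is the treatment of the edge $ab$, the only edge spanning both half-spaces; subdividing it at its single crossing with $P$ restores the clean picture of one arc above and one arc below, and allows the same isotopy to go through unchanged.
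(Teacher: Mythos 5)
The paper does not prove this lemma at all: it is quoted from the literature (the citations [BBFFHL, OT]), so there is no in-paper argument to compare against. Your construction is, in essence, the standard proof from those references, and it is correct: planar part in $P$, apexes on opposite sides, straight edges to each apex, and the observation that any cycle meets each open half-space in at most one arc. The one place your write-up leans on a claim that is false in general is the phrase that the upper arc ``may be moved freely'' because it is alone in the open upper half-space: an arc in a half-space with endpoints on the boundary plane need \emph{not} be isotopic rel endpoints into the plane (a locally knotted arc is the obstruction), even with nothing else around. What rescues you is precisely the straightness you built into the embedding: the upper arc is the pair of segments $x_1a\cup ay_m$ (a cone from $a$), and for any planar arc $\delta$ from $x_1$ to $y_m$ avoiding the rest of $C$, the cone on $\delta$ with apex $a$ is an embedded disk whose boundary is $\alpha\cup\delta$ and whose interior lies in the open upper half-space, hence misses $C$; sliding $\alpha$ across this disk is the desired isotopy. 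With that justification inserted (and the same remark for the arc through $b$, and for the subdivided edge $ab$), your argument is complete; your appeal to the fact that a finite disjoint union of arcs does not separate the plane, which supplies the target arc $\delta$, is exactly the right planar ingredient.
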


Suppose $G$ is MMIK of size 23. By Lemma~\ref{lem:2apex}, $G$ is 
not 2-apex and, therefore, $G$ has an MMN2A minor. The MMN2A graphs
through order 23 were classified in~\cite{MP}. All but eight of them
are also MMIK and none are of size 23. It follows that a MMIK graph
of size 23 has one of the eight exceptional MMN2A graphs as a minor.
Our strategy is to construct all size 23 expansions of the eight
exceptional graphs and determine which of those is in fact MMIK.
    
Before further describing our search, we remark that it does rely 
on computer support. Indeed, the initial classification of MMN2A
graphs in \cite{MP} is itself based on a computer search. 
We give a traditional proof that there are three size 23 MMIK graphs,
which is stated as Theorem~\ref{thm:TheThree} below.
We rely on computers only for the argument that there are 
no other size 23 MMIK graphs. Note that, even if 
we cannot provide a complete, traditional proof 
that there are no more than three size 23 MMIK graphs, 
our argument does strongly suggest that there are 
far fewer MMIK graphs of size 23 than the
known 92 MMIK graphs of size 22 and at least 156 of size 28~\cite{FMMNN, GMN}. 
In other words, even without computers, we have compelling evidence that 
there is a dip at size 23 for the obstructions to knotless embedding.
    
Below we give graph6 notation~\cite{sage} and edge lists for the three MMIK graphs of size 23. See also Figures~\ref{fig:G1} and \ref{fig:G2} in Appendix~\ref{sec:appG12}.
    
    \noindent%
    $G_1$ \verb"J@yaig[gv@?"
    $$[(0, 4), (0, 5), (0, 9), (0, 10), (1, 4), (1, 6), (1, 7), (1, 10), (2, 3), (2, 4), (2, 5), (2, 9),$$
    $$ (2, 10), (3, 6), (3, 7), (3, 8), (4, 8), (5, 6), (5, 7), (5, 8), (6, 9), (7, 9), (8, 10)]$$
    
    \noindent%
    $G_2$ \verb"JObFF`wN?{?"
    $$[(0, 2), (0, 4), (0, 5), (0, 6), (0, 7), (1, 5), (1, 6), (1, 7), (1, 8), (2, 6), (2, 7), (2, 8),$$
    $$(2, 9), (3, 7), (3, 8), (3, 9), (3, 10), (4, 8), (4, 9), (4, 10), (5, 9), (5, 10), (6, 10)]$$
    
    \noindent%
    $G_3$ \verb"K?bAF`wN?{SO"
    $$[(0, 4), (0, 5), (0, 7), (0, 11), (1, 5), (1, 6), (1, 7), (1, 8), (2, 7), (2, 8), (2, 9), (2, 11),$$
    $$(3, 7), (3, 8), (3, 9), (3, 10), (4, 8), (4, 9), (4, 10), (5, 9), (5, 10), (6, 10), (6, 11)]$$
    
The graph $G_1$ was discovered by Hannah Schwartz~\cite{N}. Graphs
$G_1$ and $G_2$ have order 11 while $G_3$ has order 12. We prove
the following in subsection~\ref{sec:3MMIKpf} below.
    
\begin{theorem} \label{thm:TheThree}
The graphs $G_1$, $G_2$, and $G_3$ are MMIK of size 23
\end{theorem}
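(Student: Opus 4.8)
The plan is to verify two things for each of $G_1$, $G_2$, $G_3$: that the graph is IK, and that every proper minor is nIK, so that the graph is minor minimal. The size claim is immediate, since each edge list has $23$ entries. Because nIK is minor closed, minor minimality will follow once we check that, for every edge $e$, both $G_i - e$ and $G_i/e$ are nIK; vertex-deletion minors are then automatic, as $G_i - v$ is a minor of $G_i - e$ for any $e$ incident to $v$.

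Before attacking intrinsic knottedness directly, I would first reduce the three IK claims to two. One checks that $G_3$ is obtained from $G_2$ by a single $\ty$ move: the vertices $\{0,2,6\}$ span a triangle in $G_2$, and replacing that triangle by a degree-three vertex (the new vertex $11$, adjacent to $0$, $2$, and $6$) reproduces exactly the edge list of $G_3$. Since a $\ty$ move carries an IK graph to an IK graph, the knottedness of $G_3$ follows from that of $G_2$. This leaves $G_1$ and $G_2$. Both have minimum degree $4$, so neither is the image of a $\ty$ move (which always creates a degree-three vertex); they sit at the top of their respective $\ty$--$\yt$ families, and their knottedness cannot be inherited from a smaller graph via such a family.

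The main obstacle is therefore a direct proof that $G_1$ and $G_2$ are IK. The usual shortcut---inheriting knottedness from a known IK minor---is unavailable: such a minor would be a proper IK minor, contradicting the minimality we are about to prove, and moreover $\ty$ and $\yt$ moves preserve the number of edges, so a $23$-edge graph can belong neither to the $21$-edge $K_7$ family nor to the $22$-edge $K_{3,3,1,1}$ family. I would instead establish intrinsic knottedness from scratch by a Conway--Gordon--type parity argument: fix a suitable family of cycles in the graph and show that the mod-$2$ sum of their Arf invariants is forced to be nonzero in every spatial embedding in $\R^3$, so that some cycle is knotted. For $G_1$, which is the graph found by Schwartz, I would invoke the argument of~\cite{N}; for $G_2$ I would carry out the analogous computation, exploiting the symmetries (automorphisms) of $G_2$ to keep the number of cycle classes small. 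Pushing this invariant argument through by hand is the delicate step.

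Finally, for minor minimality I would show that each minor $G_i - e$ and $G_i/e$ is $2$-apex and invoke Lemma~\ref{lem:2apex}. Concretely, for each such minor I would exhibit two vertices whose deletion leaves a planar graph, verifying planarity by Kuratowski; the automorphism groups of the $G_i$ partition the $23$ edges into a handful of orbits, so only a few deletions and contractions need be treated explicitly, and the $\ty$ relation between $G_2$ and $G_3$ lets several of the $G_3$ computations be read off from those for $G_2$. Should some minor fail to be $2$-apex, I would instead write down an explicit knotless embedding of it. Combining the IK conclusions with the nIK verifications for all minors yields that $G_1$, $G_2$, and $G_3$ are each MMIK of size $23$.
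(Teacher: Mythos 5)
Your overall architecture matches the paper's: verify IK plus all-minors-nIK, obtain $G_3$ from $G_2$ by the $\ty$ move on the triangle $(0,2,6)$ so that Lemma~\ref{lem:tyyt} gives $G_3$ IK for free, and establish minimality by checking that (up to the automorphism group) every single-edge deletion or contraction is $2$-apex, with an explicit knotless embedding for the exceptional minors (these turn out to be exactly $H_1$ for $G_1$ and $H_2$ for $G_2$, $G_3$). All of that is sound and is what the paper does.

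However, there is a genuine gap at the heart of the theorem: you never actually supply a proof that $G_1$ and $G_2$ are IK. For $G_1$ you propose to ``invoke the argument of~\cite{N}'', but \cite{N} is a private communication cited in the paper only to credit the \emph{discovery} of $G_1$; there is no published argument there to invoke, and the paper itself must (and does) construct the proof. For $G_2$ you propose ``the analogous computation'' and concede it is ``the delicate step'' --- but that step is the proof. Moreover, the mechanism you sketch (fix one family of cycles and show the mod-$2$ sum of their Arf invariants is nonzero in \emph{every} embedding) is not what works here and is not known to exist for these graphs: Conway--Gordon-type universal parity identities are available for highly structured graphs like $K_7$, not for $G_1$ or $G_2$. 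What the paper actually does is argue by contradiction, embedding by embedding, using Lemma~\ref{lem:D4} (Foisy, Taniyama--Yasuhara): it locates several Petersen-family minors ($K_{4,4}^-$ and $K_{3,3,1}$ inside $G_1$; $P_9$, $K_{4,4}^-$, and $P_8$ inside $G_2$), and, assuming no knotted cycle, repeatedly assembles $\DD$ configurations from pairs of disjoint cycles --- splitting cycles homologically as needed, so the relevant cycles are chosen \emph{adaptively}, not fixed in advance --- to conclude that no cycle pair of one Petersen-family minor is linked, contradicting Sachs' theorem. Without this device (or some substitute), your outline reduces the hardest part of the theorem to a hope, so the proposal as written does not constitute a proof.
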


We next describe the computer search that shows there are no other size 
23 MMIK graphs and completes the proof of 
Theorem~\ref{thm:3MM}. We also describe how Question~\ref{que:d3MMIK}, 
Theorem~\ref{thm:Gpe} and its corollary fit in, along with the 
various size 22 families.

There are eight exceptional graphs of size at most 23
that are MMN2A and not MMIK. Six of them are in the Heawood family of size 21 graphs. 
The other two, $H_1$ and $H_2$, are 4-regular graphs on 11 vertices with size 22 described in \cite{MP}, listed
in Appendix C, and shown in Figure~\ref{fig:H1H2}.

It turns out that the three graphs of Theorem~\ref{thm:TheThree} are
expansions of $H_1$ and $H_2$. 
In subsection~\ref{sec:1122graphs} we show that these two graphs are 
MMN2A but not IK and argue that no other size 23 expansion 
of $H_1$ or $H_2$ is MMIK.
    
The Heawood family consists of twenty graphs of size 21
related to one another by $\ty$ and $\yt$ moves, see Figure~\ref{fig:TY}.
In \cite{GMN,HNTY} two groups, working independently,
verified that 14 of the graphs in the family are MMIK, and the remaining six are MMN2A and not MMIK. 
In subsection~\ref{sec:Heawood}
below, we argue that no size 23 expansion of any of these six Heawood family
graphs is MMIK.
Combining the arguments of the next three subsections give a proof of 
Theorem~\ref{thm:3MM}.


There are two Mathematica programs written by 
Naimi and available at his website~\cite{NW} that we use throughout. 
One, isID4, is an implementation of the algorithm of 
Miller and Naimi~\cite{MN} and we refer the reader to that paper for details. Note that, while this
algorithm can show that a particular graph is IK, it does not allow us to deduce that a graph is nIK.

Instead, we make use of a second program of Naimi, findEasyKnots, to find knotless embeddings of nIK 
graphs.
This program determines the set of cycles $\Sigma$ of a graph $G$. Then, given
an embedding of $G$, for each $\sigma \in \Sigma$ the program applies $R1$ and $R2$ Reidemeister moves 
(see~\cite{R}) until it arrives at one of three possible outcomes: $\sigma$ is the unknot; $\sigma$
is an alternating (hence non-trivial) knot; or
$\sigma$ is a non-alternating knot (which may or may not be trivial). In this paper, we will often show
that a graph is nIK by presenting a knotless embedding. In all cases, this means that when we 
apply findEasyKnots to the embedding, it determines that every cycle in the graph is a trivial knot.

Before diving into our proof of Theorem~\ref{thm:3MM}, we 
state a few lemmas we will use throughout. The first is about the 
{\em minimal degree} $\delta(G)$, which is the least degree
among the vertices of graph $G$.

\begin{lemma} If $G$ is MMIK, then $\delta(G) \geq 3$.
\label{lem:delta3}
\end{lemma}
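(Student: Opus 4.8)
The plan is to prove the contrapositive: if $G$ has a vertex $v$ of degree at most two, then $G$ cannot be MMIK. The intuition is that low-degree vertices carry no essential knotting, so any knotting phenomenon in $G$ is already witnessed by a proper minor, contradicting minor minimality. I would split the argument according to whether $\delta(G)$ equals $0$, $1$, or $2$.

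First I would dispose of the easy cases. If $\deg(v) = 0$, then $v$ is an isolated vertex and every cycle of $G$ lives in $G \setminus v$; hence $G$ is IK if and only if $G \setminus v$ is IK, and the deletion minor $G \setminus v$ would already be IK, contradicting minor minimality. If $\deg(v) = 1$, then $v$ lies on no cycle of $G$, so again the edge incident to $v$ is irrelevant to knotting and deleting it (or deleting $v$) yields a proper minor that is still IK, another contradiction. In both cases the key observation is simply that intrinsic knottedness is a property of the cycle space, which such a vertex does not touch.

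The substantive case is $\deg(v) = 2$, say with neighbors $a$ and $b$. Here I would argue that suppressing $v$ — that is, contracting one of its two incident edges to form the minor $G' = G / va$ (which, after identifying, carries an edge $ab$) — preserves intrinsic knottedness. The point is that every cycle through $v$ uses both edges $va$ and $vb$, so under suppression such a cycle corresponds bijectively to a cycle through the new edge $ab$, and this correspondence respects the knot type of any spatial embedding: given a knotless embedding of $G'$, one recovers a knotless embedding of $G$ by reinserting $v$ as a point subdividing the arc for edge $ab$, and conversely. Thus $G$ is IK precisely when $G'$ is IK. Since $G'$ is a proper minor of $G$ obtained by an edge contraction, this contradicts the assumption that $G$ is MMIK.

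The main obstacle I anticipate is making the ``suppression preserves IK'' claim fully rigorous, in particular handling the degenerate possibility that $a$ and $b$ are already adjacent in $G$ (so that contraction creates a double edge, which one discards) or that the suppression creates a loop. I would need to check that in these degenerate configurations one can still exhibit an IK proper minor, either by the same subdivision argument or by passing to an appropriate deletion minor instead. Assuming Lemma~\ref{lem:2apex} and the standard fact that subdivision does not change the knot type of embedded cycles, none of these sub-cases should present a genuine difficulty, but they must each be verified so that the conclusion $\delta(G) \geq 3$ holds without exception.
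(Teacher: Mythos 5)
Your proposal is correct and follows essentially the same route as the paper's (much terser) proof: exhibit a proper minor that is still IK by deleting at a vertex of degree $0$ or $1$, or contracting/suppressing at a vertex of degree $2$, contradicting minor minimality. Your case analysis, including the degenerate situation where the two neighbors of the degree-two vertex are already adjacent (handled by the parallel-arc subdivision argument or by deletion), simply fills in details the paper leaves implicit.
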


\begin{proof} Suppose $G$ is IK with $\delta(G) < 3$.
By either deleting, or contracting an edge on, a 
vertex of small degree, we find a proper minor that
is also IK. 
\end{proof}

\begin{figure}[htb]
\centering
\includegraphics[scale=1]{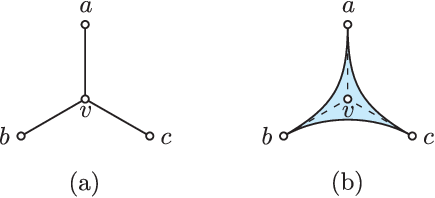}
\caption{Place a triangle in a neighborhood of
the $Y$ subgraph.} 
\label{fig:TYn}
\end{figure}

\begin{lemma}
\label{lem:tyyt}
The $\ty$ move preserves IK: If $G$ is IK and $H$ is obtained from $G$ by a $\ty$ move, then $H$ is also IK. 
Equivalently, the $\yt$ move preserves
nIK: if $H$ is nIK and $G$ is obtained from $H$ by a $\yt$ move, then $G$ is also nIK.
\end{lemma}

\begin{proof} 
We begin by noting that the $\yt$ move preserves
planarity. Suppose $H$ is planar and has an induced
$Y$ or $K_{3,1}$ subgraph with degree three vertex $v$ adjacent to vertices $a,b,c$. In a planar
embedding of $H$ we can choose a neighborhood of the $Y$ subgraph small
enough that it excludes all other edges of $H$, see Figure~\ref{fig:TYn}. 
This allows us to place a $3$-cycle $a,b,c$ within the
neighborhood which shows that the graph $G$ that results from a $\yt$ move is also planar.

Now, suppose $H$ is nIK with degree three vertex $v$.
Then, as in Figure~\ref{fig:TYn}, in a knotless embedding of $H$, we can find a neighborhood of the
induced $Y$ subgraph small enough that it intersects no other edges of $H$. Again, place a $3$-cycle 
$a,b,c$ within this neighborhood. We claim that the resulting embedding of $G$ obtained by this $\yt$
move is likewise knotless. Indeed, any cycle in $G$ that uses vertices $a$, $b$, or $c$
has a corresponding cycle in $H$ that differs only by a small move within the neighborhood of the $Y$
subgraph. 
Since every cycle of $H$ is unknotted, the same is true for every cycle in this embedding of $G$.
\end{proof}

Finally, we note that the MMIK property can move backwards
along $\ty$ moves.

\begin{lemma} \cite{BDLST,OT}
\label{lem:MMIK}
Suppose $G$ is IK and 
$H$ is obtained from $G$ by a $\ty$ move.
If $H$ is MMIK, then $G$ is also MMIK.
\end{lemma}

\subsection{Proof of Theorem~\ref{thm:TheThree}}\label{sec:3MMIKpf}\ 

In this subsection we prove Theorem~\ref{thm:TheThree}: the three graphs $G_1$, $G_2$, and $G_3$ are MMIK. 

We first show these graphs are IK. 
For $G_1$ and $G_2$ we present a proof ``by hand"
as Appendix~\ref{sec:appG12}. We remark that we can also verify that these two graphs are IK
using Naimi's Mathematica implementation~\cite{NW} of 
the algorithm of Miller and Naimi~\cite{MN}.

The graph $G_3$ is obtained from $G_2$ by a single $\ty$ move.
Specifically, using the edge list for $G_2$ given above:
$$[(0, 2), (0, 4), (0, 5), (0, 6), (0, 7), (1, 5), (1, 6), (1, 7), (1, 8), (2, 6), (2, 7), (2, 8),$$
$$(2, 9), (3, 7), (3, 8), (3, 9), (3, 10), (4, 8), (4, 9), (4, 10), (5, 9), (5, 10), (6, 10)],$$
make the $\ty$ move on the triangle $(0,2,6)$.
Since $G_2$ is IK, Lemma~\ref{lem:tyyt} implies $G_3$ is also IK.

To complete the proof of Theorem~\ref{thm:TheThree}, it remains only to show that all proper minors of $G_1$, $G_2$ and $G_3$ are nIK.

First we argue that no proper minor of $G_1$ is IK.
Up to isomorphism, there are 12 minors obtained by contracting or deleting a single edge. 
Each of these is 2-apex, except for the MMN2A graph $H_1$.
By Lemma~\ref{lem:2apex}, a 2-apex graph is nIK and Figure~\ref{fig:H1H2} gives a knotless embedding of $H_1$
(as we have verified using Naimi's program findEasyKnots, see~\cite{NW}). 
This shows that all proper minors of $G_1$ are nIK.

\begin{figure}[htb]
\centering
\includegraphics[scale=1]{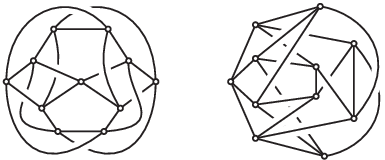}
\caption{Knotless embeddings of graphs $H_1$ (left) and $H_2$ (right).} 
\label{fig:H1H2}
\end{figure}

Next we argue that no proper minor of $G_2$ is IK.
Up to isomorphism, there are 26 minors obtained by deleting or contracting an edge of $G_2$. 
Each of these is 2-apex, except for the MMN2A graph $H_2$.
Since $H_2$ has a knotless embedding as shown in Figure~\ref{fig:H1H2}, 
then, similar to the argument for $G_1$,
all proper minors of $G_2$ are nIK.

It remains to argue that no proper minor of $G_3$ is IK.
Up to isomorphism, there are 26 minors obtained by deleting or contracting an edge of $G_3$. 
Each of these is 2-apex, except for the MMN2A graph $H_2$ which is nIK.
Similar to the previous cases, all proper minors of $G_3$ are nIK.
This completes the proof of Theorem~\ref{thm:TheThree}.

\subsection{Expansions of nIK Heawood family graphs}\label{sec:Heawood}\ 

In this subsection we argue that there are no MMIK graphs among the
size 23 expansions of the six nIK graphs in the Heawood family. 
As part of the argument, we classify with respect to knotless embedding
all graphs obtained by adding an edge to a Heawood family graph. We also
discuss our progress on Question~\ref{que:d3MMIK} and prove Theorem~\ref{thm:Gpe}
and its corollary.

We will use the notation of \cite{HNTY} 
to describe the twenty graphs in the Heawood family,
which we also recall in Appendix C.
For the reader's convenience, Figure~\ref{fig:Hea}
shows four of the graphs in the family
that are central to our discussion.

\begin{figure}[htb]
\centering
\includegraphics[scale=1]{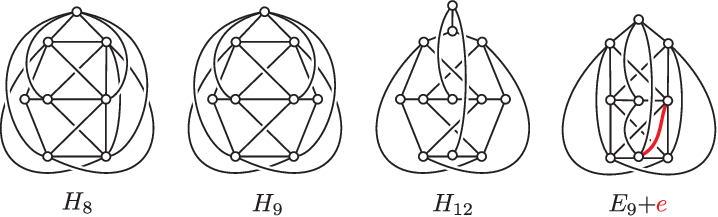}
\caption{Four graphs in the Heawood family.
($E_9$ is in the family, but $E_9+e$ is not.)} 
\label{fig:Hea}
\end{figure}

Kohara and Suzuki~\cite{KS} showed that 14 
graphs in this family are MMIK. The remaining six,
$N_9$, $N_{10}$, $N_{11}$, $N'_{10}$, $N'_{11}$, and $N'_{12}$, are nIK~\cite{GMN,HNTY}. The 
graph $N_9$ is called $E_9$ in \cite{GMN}. 
In this subsection
we argue that no size 23 expansion of these six graphs is MMIK.


The Heawood family graphs are the cousins of the Heawood graph,
which is denoted $C_{14}$ in \cite{HNTY}.
All have size 21. We can expand a graph to one
of larger size either by adding an edge or by splitting a vertex.
In {\em splitting a vertex} we replace a graph $G$ with
a graph $G'$ so that the order increases by one: $|G'| = |G|+1$.
This means we replace a vertex $v$ of $G$ with two vertices 
$v_1$ and $v_2$ in $G'$ and identify the 
remaining vertices of $G'$  with those of 
$V(G) \setminus \{v\}$.
As for edges, $E(G')$ includes the 
edge $v_1v_2$. In addition, we require that
the union of the neigborhoods of $v_1$ and $v_2$
in $G'$ otherwise agrees with the neighborhood of $v$:
$N(v) = N(v_1) \cup N(v_2) \setminus \{v_1,v_2 \}$. In other words,
$G$ is the result of contracting $v_1v_2$ in $G'$ where double 
edges are suppressed: $G = G'/v_1v_2$.

Our goal
is to argue that there is no size 23 MMIK graph that
is an expansion of one of the six nIK Heawood family graphs, $N_9$, $N_{10}$, $N_{11}$, $N'_{10}$, $N'_{11}$, and $N'_{12}$.
As a first step, we will argue that, if there were 
such a size 23 MMIK expansion, it would also be an 
expansion of one of 29 nIK graphs of size 22.

Given a graph $G$, we will use $G+e$ to denote a graph obtained by
adding an edge $e \not\in E(G)$. 
As we will show, if $G$ is a Heawood family graph, then $G+e$ will fall
in one of three families that we will call the $H_8+e$ family, 
the $E_9+e$ family, and the $H_9+e$ family. The $E_9+e$ family is
discussed in \cite{GMN} where it is shown to consist of 110 graphs, all IK. 

The $H_8 + e$ graph is formed by adding an edge to the 
Heawood family graph $H_8$ between two of its degree 5 vertices. 
The $H_8+e$ family consists of 125 graphs, 29 of which are nIK and the remaining 96 are IK, as we will now
argue. For this, we leverage 
graphs in the Heawood family. In addition to $H_8+e$, 
the family includes an $F_9+e$ graph formed by 
adding an edge between the two degree 3 vertices of $F_9$. 
Since $H_8$ and $F_9$ are both IK~\cite{KS}, 
the corresponding
graphs with an edge added are as well. By 
Lemma~\ref{lem:tyyt}, $H_8+e$, $F_9+e$ and all their
descendants are IK. These are the 96 IK graphs in the family.

The remaining 29 graphs are all ancestors of six 
graphs that we describe below. Once we establish that
these six are nIK, then Lemma~\ref{lem:tyyt} ensures
that all 29 are nIK. 

We will denote the six graphs
$T_i$, $i = 1,\ldots,6$ where we have used the letter $T$ 
since five of them have a degree two vertex
(`T' being the first letter of `two').
After contracting an edge on the degree 2 vertex, we
recover one of the nIK Heawood family graphs, $N_{11}$ or $N'_{12}$. It follows that these five graphs 
are also nIK.

The two graphs that become $N_{11}$ after 
contracting an edge have the following graph6 notation\cite{sage}:

$T_1$: \verb'KSrb`OTO?a`S' $T_2$: \verb'KOtA`_LWCMSS'

The three graphs that contract to $N'_{12}$ are:

$T_3$: \verb'LSb`@OLOASASCS' $T_4$: \verb'LSrbP?CO?dAIAW' $T_5$: \verb'L?tBP_SODGOS_T'

The five graphs we have described so far along with
their ancestors account for 26 of the nIK graphs in the
$H_8+e$ family. The remaining three are ancestors of

$T_6$: \verb'KSb``OMSQSAK'

Figure~\ref{fig:T6} shows a knotless embedding of $T_6$.
By Lemma~\ref{lem:tyyt}, its two ancestors 
are also nIK and this completes the 
count of 29 nIK graphs in the $H_8+e$ family.

\begin{figure}[htb]
\centering
\includegraphics[scale=1]{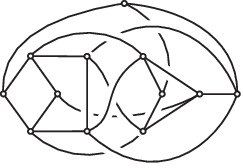}
\caption{A knotless embedding of the $T_6$ graph.} 
\label{fig:T6}
\end{figure}

The graph $H_9+e$ is formed by adding an edge to $H_9$
between the two degree 3 vertices. There are five graphs
in the $H_{9}+e$ family, four of which are $H_9+e$ and its 
descendants. Since $H_9$ is IK~\cite{KS}, by 
Lemma~\ref{lem:tyyt}, these four graphs
are all IK. The remaining graph in the family is
the MMIK graph denoted $G_{S}$ in \cite{FMMNN} and shown in Figure~\ref{fig:HS}. 
Although the graph is credited to Schwartz in that 
paper, it was a joint discovery of Schwartz and 
and Barylskiy~\cite{N}. 
Thus, all five graphs in the $H_9+e$ family are IK.

\begin{figure}[htb]
\centering
\includegraphics[scale=1]{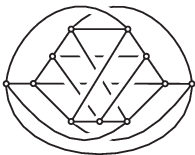}
\caption{The MMIK graph $G_S$.} 
\label{fig:HS}
\end{figure}

Having classified the graphs in the three families with respect 
to intrinsic knotting, using Corollary~\ref{cor:3fam} below,
we have completed the investigation, suggested by \cite{FMMNN}, of graphs formed
by adding an edge to a Heawood family graph.

We remark that among the three families $H_8+e$, $E_9+e$, and $H_9+e$, 
the only instances of a graph with a degree 2 vertex occur in the family $H_8+e$, which also contains no MMIK graphs. This observation suggests the following question.

\setcounter{section}{1}
\setcounter{theorem}{1}

\begin{question}
If $G$ has minimal degree $\delta(G) < 3$,
is it true that $G$'s ancestors and descendants include no MMIK graphs?
\end{question} 

\setcounter{section}{3}
\setcounter{theorem}{5}

Initially, we suspected that such a $G$ has no MMIK cousins at all. 
However, we discovered that the MMIK graph of size 26, described in 
Section~\ref{sec:ord10} below, includes graphs of minimal degree two
among its cousins. 
Although we have not completely resolved the question, we have two partial results.

\begin{theorem} If $\delta(G) < 3$ and $H$ is a descendant of $G$, then $H$ is not MMIK.
\end{theorem}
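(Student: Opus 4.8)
The goal is to show that if $\delta(G) < 3$, then no descendant $H$ of $G$ can be MMIK. Recall that $H$ is a descendant of $G$ means $H$ is obtained from $G$ by a sequence of $\ty$ moves. The plan is to trace how a vertex of degree less than three behaves under $\ty$ moves, and to argue that such a low-degree vertex forces $H$ to have a proper minor that is still IK, contradicting minor-minimality.

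**The key structural observation.**
First I would pin down what a $\ty$ move does to the offending low-degree vertex $v$ of $G$. A $\ty$ move replaces a degree-3 vertex with a triangle on its three neighbors; crucially, it does not alter the degree of any vertex outside the triangle, and it never raises the degree of a vertex from below 3 to at least 3 unless that vertex participates as one of the three neighbors. So I would argue that, after any single $\ty$ move, either $v$ is untouched and retains its low degree, or the move is performed at a degree-3 vertex adjacent to $v$, in which case $v$ picks up at most one new incident edge (from the newly created triangle). Tracking this across a whole sequence of $\ty$ moves, I would show that the image of $v$ in $H$ still has degree less than three, \emph{or} that we can instead work with a vertex of $H$ inherited from $G$ that retains low degree. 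The cleanest route is to establish the contrapositive of a lifting statement: any $\ty$ move preserves the presence of a vertex of degree at most two, so $H$ also has $\delta(H) < 3$.

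**Deriving the contradiction.**
Once I know $\delta(H) < 3$, the result is immediate from Lemma~\ref{lem:delta3}: that lemma states precisely that an MMIK graph must satisfy $\delta \geq 3$. So if $H$ had minimal degree less than three, $H$ could not be MMIK. More carefully, by Lemma~\ref{lem:tyyt} the $\ty$ move preserves IK, so $H$ is IK (assuming $G$ is IK; if $G$ is nIK the descendants need separate handling, but the route via $\delta$ does not even require IK-ness of $G$). The decisive point is purely degree-theoretic: a descendant of a graph with a low-degree vertex again has a low-degree vertex, and Lemma~\ref{lem:delta3} forbids MMIK graphs from having one.

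**The main obstacle.**
The one step that requires genuine care is verifying that the low-degree vertex truly persists through every $\ty$ move, rather than being eliminated or promoted to degree three. The subtlety is that a $\ty$ move can only be applied at a degree-3 vertex, and it adds edges among that vertex's three neighbors. If our low-degree vertex $v$ happens to be one of those three neighbors, its degree increases by exactly one. The hard part is confirming that across an arbitrarily long sequence of such moves, the degree of the tracked vertex cannot climb to three and stay there — or, more robustly, arguing that at every stage \emph{some} vertex of degree less than three survives. I expect the bookkeeping to hinge on the fact that each $\ty$ move touches only a bounded local neighborhood, so a vertex of degree at most one is essentially safe, while a degree-2 vertex needs the slightly more delicate observation that if both $\ty$ moves adjacent to it eventually fire, the resulting minor relation still yields an IK proper minor. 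Handling the degree-2 case cleanly is where the real work lies.
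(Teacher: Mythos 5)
Your overall strategy --- show that $H$ still has a vertex of degree less than three and then invoke Lemma~\ref{lem:delta3} --- is exactly the right one, and it is in fact the paper's own argument. But your execution stalls on a difficulty that does not exist, because you have the $\ty$ move backwards. In this paper $\ty$ denotes the triangle-to-Y move: it deletes the three edges of a triangle $abc$ and adds a new vertex $v$ adjacent to $a$, $b$, $c$. (What you describe --- replacing a degree-3 vertex by a triangle on its three neighbors --- is the $\yt$ move.) Under the actual $\ty$ move no degree ever increases: each of $a$, $b$, $c$ loses one (two triangle edges removed, one new edge to $v$ gained), the new vertex $v$ has degree exactly three, and every other vertex is untouched. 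Hence $\delta$ is non-increasing along any sequence of $\ty$ moves, so $\delta(H) \leq \delta(G) < 3$, and Lemma~\ref{lem:delta3} concludes immediately; that single observation is the paper's entire proof.

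The gap in your write-up is therefore genuine, not cosmetic: you explicitly defer the degree-2 case as ``where the real work lies,'' and under your reading of the move that case cannot be repaired, because your proposed lifting statement (each move preserves the presence of a vertex of degree at most two) is false for $\yt$. For instance, if the unique low-degree vertex $v$ has degree $2$ and is adjacent to a degree-3 vertex $w$, then a $\yt$ move at $w$ raises the degree of $v$ to $3$; similarly, a $\yt$ move applied to the star $K_{1,3}$ converts three degree-1 vertices into a triangle of degree-2 vertices. So in the $\yt$ direction low-degree vertices really can be promoted, and no amount of bookkeeping will salvage the claim. Once the move is oriented correctly, the obstacle you flagged evaporates and your degree-theoretic argument closes in one line.
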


\begin{proof} Since $\delta(G)$ is non-increasing under
the $\ty$ move, $\delta(H) \leq \delta(G) < 3$ and
$H$ is not MMIK by Lemma~\ref{lem:delta3}.
\end{proof}

As defined in \cite{GMN} a graph has a $\bar{Y}$ if
there is a degree 3 vertex that is also part of a $3$-cycle.
A $\yt$ move at such a vertex would result in doubled 
edges.

\begin{lemma}\label{lem:ybar} A graph with a $\bar{Y}$ is not MMIK.
\end{lemma}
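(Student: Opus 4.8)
The plan is to show that a graph $G$ with a $\bar{Y}$ cannot be minor minimal, by producing a proper minor that is still IK whenever $G$ is IK. Let $v$ be the degree 3 vertex lying on a 3-cycle, and let the neighbors of $v$ be $a$, $b$, $c$, where $a$ and $b$ are joined by the edge completing the triangle. The key observation is that the triangle $vab$ together with the third edge $vc$ forms a local configuration on which I can perform a $\yt$ move \emph{in the opposite direction}: the degree 3 vertex $v$ sits at the center of a $Y$ (its three edges $va$, $vb$, $vc$), so deleting $v$ and adding edges among $a$, $b$, $c$ is the natural move. The obstruction the paper flags is precisely that, because $ab$ is already an edge, this $\yt$ move would create a doubled edge. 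The idea is to turn this apparent obstruction into the proof.

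First I would argue that $G$ is never MMIK by exhibiting a proper IK minor. Suppose $G$ is IK. Perform the $\yt$ move at $v$: delete $v$ and its three incident edges, and add the triangle $ab$, $bc$, $ac$ among its former neighbors. By Lemma~\ref{lem:tyyt}, this $\yt$ move preserves nIK; read contrapositively (and matching the direction used in Lemma~\ref{lem:MMIK}), the $\ty$ move preserves IK, so the graph $G'$ obtained \emph{before} suppressing the doubled edge is IK precisely because $G$ is its $\ty$-child. The crucial point is that the edge $ab$ already present in $G$ becomes a doubled edge in $G'$. Suppressing that doubled edge to a single edge is a minor operation (deletion of one of the two parallel edges), and the resulting simple graph $H = G'/\!\!\sim$ is a minor of a graph related to $G$.

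The cleaner way to package this, which I expect to be the actual argument, is to work directly with $G$ rather than its cousin. Since $v$ has degree 3 and two of its neighbors $a$, $b$ are already adjacent, I would contract the edge $vc$. This identifies $v$ and $c$ into a single vertex whose neighborhood includes $a$ and $b$; the former triangle edges $va$, $vb$ survive as edges from the merged vertex to $a$ and $b$, and no doubled edges are created by this particular contraction. The resulting graph $H = G/vc$ is a proper minor of $G$, and I claim it is still IK. This is where Lemma~\ref{lem:MMIK} does the work: the contraction $G/vc$ followed by suppression realizes $H$ as the $\ty$-parent of $G$ at the vertex $v$, so $G$ is obtained from $H$ by a $\ty$ move, and by Lemma~\ref{lem:MMIK} if $G$ is IK then its $\ty$-parent $H$ is IK as well.

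The main obstacle will be bookkeeping the direction of the moves so that the doubled-edge subtlety is handled correctly: one must check that the specific $\ty$/$\yt$ correspondence at $v$ does yield $H$ as a genuine proper minor and not merely as a cousin of the same size, and that the triangle edge $ab$ is exactly the edge that would otherwise double. Once that identification is pinned down, the conclusion is immediate: $H$ is a proper minor of $G$ that is IK, so $G$ has a proper IK minor and therefore is not minor minimal. Hence $G$ is not MMIK, which is the statement of Lemma~\ref{lem:ybar}.
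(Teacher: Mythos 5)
Your proof has a genuine gap: both of your packagings run the $\ty$/$\yt$ machinery in the invalid direction, and the key intermediate claim (that $H = G/vc$ is IK) is in fact false in general. Lemma~\ref{lem:tyyt} says the $\ty$ move preserves IK (triangle-side IK implies $Y$-side IK), equivalently the $\yt$ move preserves nIK; it gives no way to pass IK from a $\ty$-child up to its parent, which is exactly what both of your arguments need (in the first, from $G$ to the multigraph $G'$; in the second, from $G$ to $G/vc$). Lemma~\ref{lem:MMIK} does not fill this hole: it transfers the property MMIK backwards under the standing hypothesis that the parent is already known to be IK; it never concludes that a parent is IK. Worse, your identification of $G/vc$ as the $\ty$-parent of $G$ fails on its own terms: performing the $\ty$ move on the triangle formed by $a$, $b$, and the merged vertex in $G/vc$ deletes the edge $ab$, so the child produced is (a relabeled copy of) $G - ab$, not $G$. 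The edge $ab$ --- the very thing that makes the configuration a $\bar{Y}$ --- breaks the parent/child correspondence you want to exploit.

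Moreover, the claim ``$G/vc$ is IK'' is not merely unjustified; it has counterexamples. The Heawood family contains both IK graphs ($K_7$) and nIK graphs ($N_9$) connected by a sequence of moves, and at the first step of such a sequence where IK is lost the move must be a $\yt$ move (a $\ty$ move would preserve IK by Lemma~\ref{lem:tyyt}). So there is an IK graph $X$ with a degree-3 vertex $v$, with pairwise nonadjacent neighbors $a,b,c$, whose $\yt$-image $Y$ at $v$ is nIK. Put $G = X + ab$: then $G$ is IK, has a $\bar{Y}$ at $v$, and $G/vc \cong Y$ is nIK. The paper's proof avoids all of this by running the move in the valid direction: apply the $\ty$ move to the triangle $vab$ of $G$ itself, so that Lemma~\ref{lem:tyyt} makes the resulting graph IK; the vertex $v$ now has degree 2, and suppressing it yields a homeomorphic (hence still IK) graph, which one checks is exactly $G - ab$. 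Thus the proper IK minor witnessing non-minimality is the edge deletion $G - ab$, not the contraction $G/vc$.
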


\begin{proof} Let $G$ have a vertex $v$ with 
$N(v) = \{a,b,c\}$ and $ab \in E(G)$. We can assume
$G$ is IK. Make a $\ty$ move on triangle $v,a,b$ to 
obtain the graph $H$. By Lemma~\ref{lem:tyyt}
$H$ is IK, as is the homeomorphic graph $H'$ obtained
by contracting an edge at the degree 2 vertex $c$.
But $H' = G - ab$ is obtained by deleting an 
edge $ab$ from $G$. Since $G$ has a proper subgraph
$H'$ that is IK, $G$ is not MMIK.
\end{proof}






\begin{theorem} If $G$ has a child $H$ with $\delta(H) < 3$,
then $G$ is not MMIK.
\end{theorem}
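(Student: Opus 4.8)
The plan is to split on the minimal degree of $G$ itself and then reduce to Lemma~\ref{lem:ybar}. First I would dispose of the trivial case: if $\delta(G) < 3$, then $G$ is not MMIK directly by Lemma~\ref{lem:delta3}, and there is nothing more to prove. So henceforth I would assume $\delta(G) \geq 3$ and work to exhibit a $\bar{Y}$ in $G$.

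The heart of the argument is a degree count for the $\ty$ move. Say the move producing the child $H$ acts on the triangle with vertices $a$, $b$, $c$. It introduces a new vertex $w$ of degree exactly $3$, lowers each of $\deg(a)$, $\deg(b)$, $\deg(c)$ by one (each trades its two triangle edges for a single edge to $w$), and leaves every other degree unchanged. Since $\delta(H) < 3$, some vertex $u$ of $H$ has degree at most $2$. This $u$ cannot be the new vertex $w$, whose degree is $3$; and it cannot be a vertex outside $\{a,b,c\}$, since such a vertex keeps the degree it had in $G$, namely $\geq \delta(G) \geq 3$. Hence $u$ is one of the triangle vertices, say $u = a$, and $\deg_G(a) = \deg_H(a) + 1 \leq 3$. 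Combined with $\delta(G) \geq 3$, this forces $\deg_G(a) = 3$.

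The conclusion then follows immediately: in $G$ the vertex $a$ has degree $3$ and lies on the triangle $abc$, so $G$ contains a $\bar{Y}$, and Lemma~\ref{lem:ybar} shows that $G$ is not MMIK.

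I expect the only real care needed is in the degree bookkeeping of the second step: correctly recording that the net change at each triangle vertex is $-1$ rather than $-2$, and pinning down that the single sub-$3$ vertex of $H$ must be a triangle vertex rather than the freshly created $Y$-vertex or an untouched vertex. Once that is settled, both cases collapse onto the two prior lemmas, so there is no genuine obstacle beyond this elementary accounting.
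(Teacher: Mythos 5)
Your proof is correct and follows essentially the same route as the paper's: reduce to $\delta(G)\geq 3$ via Lemma~\ref{lem:delta3}, locate a degree-$3$ triangle vertex to exhibit a $\bar{Y}$, and conclude with Lemma~\ref{lem:ybar}. The only difference is that you spell out the degree bookkeeping (new vertex has degree $3$, triangle vertices drop by exactly $1$, others unchanged) that the paper compresses into the single phrase ``It follows that $G$ has a $\bar{Y}$.''
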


\begin{proof} By Lemma~\ref{lem:delta3}, we can assume
$G$ is IK with $\delta(G) = 3$. It follows that $G$ has
a $\bar{Y}$ and is not MMIK by the previous lemma.
\end{proof}


Suppose $G$ is a size 23 MMIK expansion of one 
of the six nIK Heawood family graphs. 
We will argue that $G$ must be an expansion
of one of the graphs in the three families, $H_8+e$,
$E_9+e$, and $H_9+e$. However, as a MMIK graph,
$G$ can have no size 22 IK minor. Therefore, $G$ must
be an expansion of one of the 29 nIK graphs in the 
$H_8+e$ family.

There are two ways to form a size 22 expansion of one of 
the six nIK graphs, either add an edge or split a vertex.
We now show that if $H$ is in the Heawood family, then 
$H+e$ is in one of the three families,
$H_8+e$, $E_9+e$, and $H_9 + e$. 
We begin with a proof of a theorem and corollary,
mentioned in the introduction, that describe how adding an edge to 
a graph $G$ interacts with the graph's family.

\setcounter{section}{1}
\setcounter{theorem}{2}

\begin{theorem} 
If $G$ is a parent of $H$, then every $G+e$ has a cousin that is an $H+e$.
\end{theorem}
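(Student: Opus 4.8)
The plan is to observe that adding an edge and performing a $\ty$ move at a triangle are operations that commute, so I can exhibit a suitable $H+e$ not merely as a cousin but in fact as a \emph{child} of $G+e$. Since $G$ is a parent of $H$, there is a triangle $T$ of $G$ on vertices $a,b,c$ (with edges $ab,bc,ca$) such that $H$ is obtained from $G$ by the $\ty$ move at $T$: one deletes $ab,bc,ca$, introduces a new vertex $v$, and adds the edges $va,vb,vc$.

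First I would fix an arbitrary edge $e \notin E(G)$ and pass to $G+e$. The key point is that adding an edge can never destroy an existing triangle, so $T$ remains a triangle of $G+e$; moreover $e$ is distinct from each of $ab,bc,ca$, since those edges already lie in $E(G)$. Hence the very same $\ty$ move at $T$ is available in $G+e$, and I would carry it out.

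Next I would identify the resulting graph. Deleting $ab,bc,ca$ does not affect $e$, and the vertex $v$ together with $va,vb,vc$ is adjoined exactly as in the passage from $G$ to $H$. So the result has edge set $(E(G)\setminus\{ab,bc,ca\}) \cup \{va,vb,vc\} \cup \{e\}$, which equals $E(H)\cup\{e\}$. This is a well-defined graph $H+e$: the endpoints of $e$ lie in $V(G) \subseteq V(H)$, and because $e\notin E(G)$ and $e$ is not incident to the fresh vertex $v$, we have $e\notin E(H)$. Thus the graph produced is precisely an $H+e$, and it is a child---hence a cousin---of $G+e$. As $e$ was arbitrary, every $G+e$ has a cousin of the form $H+e$.

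I do not anticipate a genuine obstacle: the whole content is that the triangle of $G$ survives the addition of an edge and that the $\ty$ move leaves the added edge untouched. The only care needed is the bookkeeping that $e\notin E(H)$, so that $H+e$ is a legitimate simple graph, and that no doubled edges appear---both immediate, since the $\ty$ move introduces a brand-new vertex and never joins two pre-existing vertices.
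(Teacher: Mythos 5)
Your proposal is correct and is essentially the paper's own argument: both proofs perform the same $\ty$ move on the triangle $abc$, which survives in $G+e$, and observe that the result is exactly $H+e$, hence a cousin (indeed a child) of $G+e$. Your version just adds the explicit bookkeeping (triangle survival, $e\notin E(H)$, comparison of edge sets) that the paper leaves implicit.
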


\begin{proof}
Let $H$ be obtained by a $\ty$ move that replaces the triangle $abc$ in $G$ with three edges on the new vertex $v$. That is,
$V(H) = V(G) \cup \{v\}$.
Form $G+e$ by adding the edge $e = xy$. Since $V(H) = V(G) \cup \{v\}$, then $x,y \in V(H)$ and the graph $H+e$ is a 
cousin of $G+e$ by a $\ty$ move on the triangle $abc$.
\end{proof}

\begin{corollary}
If $G$ is an ancestor of $H$, then every $G+e$ has a cousin that is an $H+e$.
\end{corollary}

\setcounter{section}{3}
\setcounter{theorem}{8}


Every graph in the Heawood family is an ancestor of one of two graphs, the Heawood graph (called $C_{14}$ in \cite{HNTY}) and the graph $H_{12}$ (see Figure~\ref{fig:Hea}).



\begin{theorem}
\label{thm:Heawpe}
Let $H$ be the Heawood graph. Up to isomorphism, there are two $H+e$ graphs. One
is in the $H_8+e$ family, the other in the $E_9+e$ family.
\end{theorem}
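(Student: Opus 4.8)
The plan is to handle the count and the family-identification separately, since the number of graphs $H+e$ is governed by the symmetry of the Heawood graph while the family membership is a question about $\ty$ and $\yt$ moves.

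For the count I would use that the Heawood graph $H$ is distance-transitive of diameter $3$. Consequently $\mathrm{Aut}(H)$ acts transitively on the vertex pairs at distance $2$ and, separately, on those at distance $3$, and these two sets together are exactly the non-adjacent pairs. Hence adding an edge produces at most two graphs up to isomorphism. To see the two are genuinely distinct, I would invoke that $H$ has girth $6$: a distance-$2$ pair has a common neighbor (unique, since a second common neighbor would create a $4$-cycle), so the new edge completes a triangle, whereas a distance-$3$ pair has no common neighbor and so $H+e$ stays triangle-free. This gives exactly two $H+e$ graphs, one containing a triangle and one triangle-free.

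For the identification, the point is that cousinhood is an equivalence relation generated by the mutually inverse $\ty$ and $\yt$ moves, so to place each $H+e$ in its family it suffices to exhibit an explicit sequence of moves joining it to a recognizable representative. The triangle-free $H+e$ has $12$ vertices of degree $3$ (the edge raising only its two endpoints to degree $4$), so I would apply $\yt$ moves to reduce its order step by step until reaching a graph isomorphic to a member of the $E_9+e$ family. The triangle case is different precisely because the triangle through $e$ admits an immediate $\ty$ move; following this and subsequent moves I would route it to a member of the $H_8+e$ family (for instance to the $F_9+e$ graph, which the excerpt already places in the $H_8+e$ family). Conceptually, Corollary~\ref{cor:Gpe} and Lemma~\ref{lem:tyyt} are what guarantee that such move sequences realize genuine cousin relationships; since the count already caps the number of $H+e$ graphs at two, matching each orbit to one family completes the proof.

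The main obstacle is the bookkeeping in this second step: one must fix concrete $\ty$/$\yt$ sequences, track the endpoints of the added edge $e$ through each move, ensure no $\yt$ move ever meets $e$ or introduces a doubled edge, and finally certify up to isomorphism that the terminal graph is the advertised family representative. This is a finite verification of the same flavor as the computer-assisted checks used elsewhere in the paper, but the conceptual heart is the clean dichotomy between the triangle-creating and triangle-free non-edges of the Heawood graph, which simultaneously bounds the number of $H+e$ graphs by two and separates them into the $H_8+e$ and $E_9+e$ families.
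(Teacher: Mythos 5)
Your counting argument is essentially the paper's own, carried out a bit more carefully. The paper also reduces to the two automorphism orbits of non-adjacent pairs via the diameter-three observation, and your girth-six dichotomy --- a distance-two pair has a unique common neighbor, so one $H+e$ contains a triangle while the other is triangle-free --- is a clean certificate that the two graphs are genuinely non-isomorphic, a point the paper gets only implicitly from their landing in distinct families.

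The identification step is where you diverge from the paper, and where your proposal stops short of a proof. The paper does no move-chasing at all for the distance-three graph: a distance-three pair lies in opposite parts of the bipartition of the Heawood graph, so that $H+e$ is a bipartite graph of size $22$, and it is intrinsically knotted since the Heawood graph is; the classification of bipartite IK graphs with $22$ edges in \cite{KMO} then identifies it as cousin 89 of the $E_9+e$ family. That single observation eliminates all of the bookkeeping you describe, which is exactly what the paper's route buys. For the distance-two graph the paper, like you, rests on an unexhibited identification with the $H_8+e$ family. Your plan --- descend by $\yt$ moves and match the terminal graph against the published catalogs of the two families --- is workable in principle, but as written it is a scheme for a verification rather than the verification itself: no move sequence is fixed, and ``routing the triangle case to $F_9+e$'' is precisely the assertion to be proved, so the family membership of neither orbit is actually established in your write-up. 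Two smaller corrections: Lemma~\ref{lem:tyyt} concerns preservation of intrinsic knotting and plays no role in certifying cousinhood (a cousin relationship is witnessed by the move sequence itself, by definition). And Corollary~\ref{cor:Gpe} is more useful than your ``conceptual'' citation suggests, provided it is applied upward: taking $G=H_8$, an ancestor of the Heawood graph, the corollary produces a cousin of $H_8+e$ of the form $(\mbox{Heawood graph})+e$, where $e$ joins two of the original $K_7$-vertices; these lie in the same part of the bipartition, hence at distance two, which settles the distance-two half of the theorem with no explicit move sequence at all.
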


\begin{proof}
The diameter of the Heawood graph is three. Up to isomorphism, we can either add an edge between vertices of distance two or three.
If we add an edge between vertices of distance two, 
the result is a graph in the $H_8+e$ family. If the distance
is three, we are adding an edge between the different parts and the result is a bipartite graph of size 22. 
As shown in~\cite{KMO}, this means it is cousin 89 of the
$E_9+e$ family.
\end{proof}

\begin{theorem} 
\label{thm:H12pe}
Let $G$ be formed by adding an edge to $H_{12}$. 
Then $G$ is in the $H_8+e$, $E_9+e$, or $H_9+e$ family.
\end{theorem}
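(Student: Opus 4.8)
The plan is to mirror the proof of Theorem~\ref{thm:Heawpe}. Since $H_{12}$ is triangle-free it admits no $\ty$ move, so every triangle in $G = H_{12}+e$ must use the new edge $e$; this is exactly what will let us move $G$ around its family. First I would compute the automorphism group of $H_{12}$ and use it to sort the non-adjacent vertex pairs into orbits. Two non-edges in the same orbit yield isomorphic graphs $H_{12}+e$, so this reduces the problem to a finite check, one representative $G$ per orbit. As in Theorem~\ref{thm:Heawpe}, I expect the orbits to be organized by the distance in $H_{12}$ between the endpoints of $e$: distance-two pairs create a triangle in $G$, while more distant pairs may not.

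For each representative $G = H_{12}+e$, the goal is to exhibit a cousin of $G$ that is a labelled member of one of the three target families. When $e$ joins two vertices at distance two, $G$ contains a triangle, and I would perform a $\ty$ move on such a triangle (followed, as needed, by further $\ty$ and $\yt$ moves, always avoiding doubled edges) to transport $G$ to a graph whose graph6 string can be matched against the explicit member lists of the $H_8+e$, $E_9+e$, and $H_9+e$ families enumerated in \cite{GMN, KMO}. For edges between more distant vertices, I would argue as in Theorem~\ref{thm:Heawpe} that $G$ is bipartite, or otherwise falls among the $E_9+e$ cousins, placing it in the $E_9+e$ family. Together with Theorem~\ref{thm:Heawpe}, this completes the picture: via Corollary~\ref{cor:Gpe}, knowing the families of $(C_{14})+e$ and $H_{12}+e$ determines the family of $H+e$ for every Heawood family graph $H$.

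The main obstacle is twofold. First, one must be certain the orbit list is complete, so that no isomorphism type of $H_{12}+e$ is overlooked; this is a finite but delicate computation with $\mathrm{Aut}(H_{12})$. Second, and more essentially, the argument cannot invoke Corollary~\ref{cor:Gpe} with $H_{12}$ itself as a parent, because $H_{12}$ has no children: being triangle-free, it is a local maximum of the order within its family. Consequently the moves that identify the family of each $H_{12}+e$ rely entirely on triangles manufactured by $e$, and the real work is the bookkeeping of tracing each orbit representative through such a sequence of moves to a recognized $H_8+e$, $E_9+e$, or $H_9+e$ graph. Once every representative is shown to be a cousin of a graph in one of these three families, the theorem follows.
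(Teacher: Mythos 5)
Your framework---cut the problem down to one representative $H_{12}+e$ per orbit of $\mathrm{Aut}(H_{12})$ on non-edges, then trace each representative through $\ty$ and $\yt$ moves until it matches a listed member of one of the three families---is viable in principle, since family membership is a finite, checkable condition, and your structural observations are accurate: $H_{12}$ is triangle-free, hence has no children, so Corollary~\ref{cor:Gpe} cannot be invoked with $H_{12}$ as the ancestor. But you stop exactly at the obstacle you identify: everything is deferred to ``bookkeeping,'' which makes your proof a computer search rather than an argument. The paper overcomes the obstacle with a short structural idea that your proposal is missing: $H_{12}$ is itself a \emph{descendant} of $K_7$, obtained by five $\ty$ moves, with new degree-3 vertices $a_1,\dots,a_5$ and original vertices $b_1,\dots,b_7$. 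Classify the added edge $e$ by the types of its endpoints ($b$--$b$, $a$--$a$, or $a$--$b$), and in $H_{12}+e$ perform $\yt$ moves at every $a_i$ that is neither an endpoint of $e$ nor adjacent to both endpoints of $e$; these moves remain legal after adding $e$ because $e$ creates no adjacency inside such an $a_i$'s neighborhood. This collapses $H_{12}+e$ to a cousin that is $H_8$, $H_9$, or $F_9$ plus an edge (in the $b$--$b$ case exactly one $a_i$ survives, giving precisely the graph $H_8+e$). The $H_9$ case lands in the $H_9+e$ family by definition, and since $H_8$ and $F_9$ are ancestors of the Heawood graph, Corollary~\ref{cor:Gpe} combined with Theorem~\ref{thm:Heawpe} places the remaining cases in the $H_8+e$ or $E_9+e$ family. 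Three short cases, no enumeration, no list matching.

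Separately, the one concrete shortcut you do propose is false. The bipartite identification in Theorem~\ref{thm:Heawpe} works because the Heawood graph is bipartite and a distance-three edge joins its two parts, so the result is a bipartite size-22 IK graph recognizable via \cite{KMO}. That has no analogue here: $H_{12}$ is not bipartite. Indeed, the six $K_7$-edges surviving in $H_{12}$ form a hexagon on the six degree-4 vertices, and each Y-vertex adjacent to the degree-3 vertex $b_1$ has its other two neighbors antipodal on that hexagon, so half the hexagon closed up through such a Y-vertex is a 5-cycle. Consequently no $H_{12}+e$ is ever bipartite, and your fallback (``otherwise falls among the $E_9+e$ cousins'') must carry the entire burden for every such case---which is again unguided computation. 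In short, you correctly diagnosed why the proof of Theorem~\ref{thm:Heawpe} does not transfer, but the replacement you offer is either erroneous (the bipartite prong) or not yet a proof (the deferred tracing).
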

 
\begin{proof}
Note that $H_{12}$ consists of six degree 4 vertices and six degree 3 vertices.
Moreover, five of the degree 3 vertices are created by $\ty$ moves in the process of obtaining $H_{12}$ from $K_7$.
Let $a_i$ ($i = 1 \ldots 5$) denote those five degree 3 vertices.
Further assume that $b_1$ is the remaining degree 3 vertex and $b_2$, $b_3$, $b_4$, $b_5$, $b_6$ and $b_7$ are 
the remaining degree 4 vertices.
Then the $b_j$ vertices correspond to vertices of $K_7$ before applying the  $\ty$ moves.

First suppose that $G$ is obtained from $H_{12}$ by adding an edge which connects two $b_j$ vertices.
Since these seven vertices are the vertices of $K_7$ before using $\ty$ moves, there is exactly one vertex among the $a_i$, say $a_1$, that is adjacent to the two endpoints of the added edge.
Let $G'$ be the graph obtained from $G$ by applying 
$\yt$ moves at $a_2$, $a_3$, $a_4$ and $a_5$.
Then $G'$ is isomorphic to $H_8+e$.
Therefore $G$ is in the $H_8+e$ family.

Next suppose that $G$ is obtained from $H_{12}$ by adding an edge which connects two $a_i$ vertices.
Let $a_1$ and $a_2$ be the endpoints of the added edge.
We assume that $G'$ is obtained from $G$ by using 
$\yt$ moves at $a_3$, $a_4$ and $a_5$.
Then there are two cases: either $G'$ is obtained from $H_9$ or $F_9$ by adding an edge which connects two degree 3 vertices.
In the first case, $G'$ is isomorphic to $H_9+e$.
Thus $G$ is in the $H_9+e$ family.
In the second case, $G'$ is in the $H_8+e$ or $E_9+e$ family by Corollary~\ref{cor:Gpe} and Theorem~\ref{thm:Heawpe}.
Thus $G$ is in the $H_8+e$ or $E_9+e$ family.

Finally suppose that $G$ is obtained from $H_{12}$ by adding an edge which connects an $a_i$ vertex and 
a $b_j$ vertex.
Let $a_1$ be a vertex of the added edge.
We assume that $G'$ is the graph obtained from $G$ by using $\yt$ moves at $a_2$, $a_3$, $a_4$ and $a_5$.
Since $G'$ is obtained from $H_8$ by adding an edge, $G'$ is in the $H_8+e$ or $E_9+e$ family by Corollary~\ref{cor:Gpe} and Theorem~\ref{thm:Heawpe}.
Therefore $G$ is in the $H_8+e$ or $E_9+e$ family.
\end{proof} 

\begin{corollary} 
\label{cor:3fam}
If $H$ is in the Heawood family, then $H+e$ is in the $H_8+e$, $E_9+e$, or $H_9+e$ family.
\end{corollary}

\begin{proof} The graph $H$ is either an ancestor of the Heawood graph or $H_{12}$. Apply Corollary~\ref{cor:Gpe} and
Theorems~\ref{thm:Heawpe} and \ref{thm:H12pe}.
\end{proof}

\begin{corollary} 
\label{cor:29nIK}
If $H$ is in the Heawood family and $H+e$ is nIK, then $H+e$ is one of the 29 nIK graphs in the 
$H_8+e$ family
\end{corollary}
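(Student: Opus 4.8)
The plan is to assemble this corollary directly from the three family-level classifications already in hand, so the proof amounts to a short elimination argument. First I would invoke the preceding corollary, which guarantees that if $H$ is in the Heawood family then $H+e$ lands in one of exactly three families: the $H_8+e$ family, the $E_9+e$ family, or the $H_9+e$ family. This reduces the problem to deciding which of these three can contain an nIK graph.

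Next I would rule out two of the three families using the IK-counts established earlier in this subsection. The $E_9+e$ family was shown (citing \cite{GMN}) to consist of 110 graphs, \emph{all} of which are IK, so no nIK graph can live there. Likewise, the $H_9+e$ family was shown to consist of five graphs, all IK (four being $H_9+e$ and its descendants, which are IK by Lemma~\ref{lem:tyyt} since $H_9$ is IK, and the fifth being the MMIK graph $G_S$). Hence if $H+e$ is nIK, it cannot belong to the $E_9+e$ or $H_9+e$ families, and the preceding corollary forces $H+e$ into the $H_8+e$ family.

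Finally I would appeal to the explicit breakdown of the $H_8+e$ family: it has 125 graphs, of which exactly 29 are nIK (the ancestors of the six graphs $T_1,\ldots,T_6$) and the remaining 96 are IK (the descendants of $H_8+e$ and $F_9+e$). Since $H+e$ is nIK and lies in this family, it must be one of those 29 graphs, which is the desired conclusion.

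There is no real obstacle here, as every ingredient has been proved above; the only thing to be careful about is that the three enumerations are genuinely exhaustive and mutually exclusive in the way the elimination requires. In particular, the argument relies on the fact, asserted just before this corollary, that $H+e$ falls into \emph{one} of the three named families and that the IK/nIK status of every member of each family has been accounted for. Given those facts, the corollary follows immediately.
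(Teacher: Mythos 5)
Your proof is correct and matches the paper's intent exactly: the paper states this corollary without proof precisely because it follows immediately from the preceding corollary (that $H+e$ lies in the $H_8+e$, $E_9+e$, or $H_9+e$ family) together with the earlier counts showing that all graphs in the $E_9+e$ and $H_9+e$ families are IK, leaving only the 29 nIK graphs of the $H_8+e$ family as possibilities.
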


\begin{lemma}
\label{lem:deg2}
Let $H$ be a nIK Heawood family graph and $G$ be an expansion obtained by splitting a vertex of $H$. 
Then either $G$ has a vertex of degree at most two, or else it is in the $H_8+e$, $E_9+e$, or $H_{9}+e$ family.
\end{lemma}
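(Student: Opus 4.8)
The plan is to analyze the split directly and push every case back onto the earlier results about adding an edge to a Heawood family graph. Write $v$ for the vertex of $H$ that is split and $v_1,v_2$ for the two vertices of $G$ replacing it. Since we are after a size~22 expansion, no neighbour of $v$ is shared by $v_1$ and $v_2$ (a shared neighbour would raise the size by more than one), so the neighbours of $v$ are partitioned: say $v_1$ is joined to $v_2$ together with a set $A\subseteq N_H(v)$, and $v_2$ is joined to $v_1$ together with $B=N_H(v)\setminus A$. Then $\deg_G v_1=|A|+1$ and $\deg_G v_2=|B|+1$. If $|A|\le 1$ or $|B|\le 1$ then $G$ has a vertex of degree at most two and we are in the first alternative, so from now on I assume $|A|,|B|\ge 2$.

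The heart of the argument is the case in which one side, say $v_1$, has degree exactly three, i.e.\ $A=\{x,y\}$. Here I would show $G$ is a cousin of some $H'+e$ with $H'$ in the Heawood family, and then invoke the corollary that every such $H'+e$ lies in the $H_8+e$, $E_9+e$, or $H_9+e$ family. There are two subcases. If $xy\notin E(H)$, then $v_1$ is a degree-3 vertex not on a triangle (its neighbours $v_2,x,y$ are pairwise non-adjacent because $A,B$ are disjoint), and a $\yt$ move at $v_1$ produces exactly $H+xy$, so $G$ and $H+xy$ are cousins. If $xy\in E(H)$, then $v,x,y$ is a triangle of $H$; a $\ty$ move on it yields a cousin $H^\circ$ of $H$ in which $xy$ has been deleted, and a direct check of neighbourhoods gives $G\cong H^\circ+xy$ (identify $v_2$ with $v$ and $v_1$ with the new degree-3 vertex). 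Either way the corollary places $G$ in one of the three families. I note $H^\circ$ is a legitimate Heawood family graph: since a Heawood family graph has no degree-3 vertex on a triangle, $x$, $y$, and $v$ all have degree at least four, so the $\ty$ move creates no vertex of degree two.

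It remains to treat $|A|,|B|\ge 3$, so $\deg_H v=|A|+|B|\ge 6$. As every Heawood family graph arises from $K_7$ by $\ty$ moves, each of which lowers three degrees by one and inserts a degree-3 vertex, the maximum degree is $6$ and is attained only at a vertex untouched by every $\ty$ move; hence $|A|=|B|=3$ and $v$ is such an untouched $K_7$ vertex, with $N_H(v)$ equal to the six other original vertices. I would then undo the construction of $H$: applying a $\yt$ move at each $\ty$-created vertex, in reverse order of creation (never obstructed, since none of those vertices lies on a triangle and distinct contracted triangles share no edge), returns $G$ to $K_7$ with the single vertex $v$ still split as $(3,3)$. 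Because $K_7$ is vertex-transitive and its automorphism group is transitive on $3$-subsets, this graph $S$ is unique up to isomorphism and is a cousin of $G$.

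Thus the entire case $|A|,|B|\ge 3$ collapses to a single explicit graph, and the main obstacle is to certify that $S$ lies in one of the three families. First, $S$ has $K_7$ as a minor (contract $v_1v_2$), and since $K_7$ is IK and any graph with an IK minor is IK, $S$ is IK; this rules out the nIK part of the $H_8+e$ family and points toward the $E_9+e$ or $H_9+e$ family (whose members are all IK), or the IK part of the $H_8+e$ family. I expect to finish by exhibiting an explicit sequence of $\ty$ and $\yt$ moves carrying $S$ to an $H'+e$ graph --- equivalently, a cousin $S^*$ of $S$ together with an edge whose removal is a Heawood family graph --- and then appealing once more to the corollary. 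As $S$ is a single small graph, this last step is a finite verification, and it is precisely here that the paper's computer check does the work; determining which Heawood family graph plus edge $S$ reduces to is where the genuine difficulty lies.
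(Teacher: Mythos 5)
Your degree-3 case is correct, and in fact more careful than the paper's own proof: the paper just performs a $\yt$ move at the new degree-3 vertex to produce $H+e$, without addressing the subcase where the two neighbours $x,y$ assigned to that vertex are adjacent in $H$ (there the $\yt$ move is disallowed, since it would double the edge $xy$). Your workaround --- a $\ty$ move on the triangle $vxy$ of $H$ gives a cousin $H^\circ$, still in the Heawood family, with $G\cong H^\circ+xy$ --- is valid and closes that subcase. The genuine gap is your case $|A|,|B|\ge 3$. It rests on the claim that every Heawood family graph arises from $K_7$ by $\ty$ moves alone, so that a degree-6 vertex must be an untouched $K_7$ vertex. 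That claim is false precisely for the graphs the lemma is about: by Lemma~\ref{lem:tyyt} every $\ty$-descendant of $K_7$ is IK, so the six nIK members $N_9$, $N_{10}$, $N_{11}$, $N'_{10}$, $N'_{11}$, $N'_{12}$ are exactly the members of the family that are \emph{not} descendants of $K_7$; reaching them requires $\yt$ moves, which raise degrees, and your structural description of degree-6 vertices (hence the reduction to a split of $K_7$) does not apply to them. Even granting your framing, the case is left unfinished: you reduce it to a single graph $S$ and then concede that identifying $S$'s family is ``where the genuine difficulty lies.'' Note also that $S$ is an expansion of $K_7$, not of any of the six nIK graphs, so analyzing it would not answer the question the lemma asks.

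What is missing is the one fact the paper uses to make this case vacuous: each of the six nIK Heawood family graphs has maximum degree at most 5. This is a finite check on six explicit graphs; for instance $N_9$ has six vertices of degree 5 and three of degree 4, and none of the six has a degree-6 vertex. Since $A$ and $B$ partition $N_H(v)$, this gives $|A|+|B|=\deg_H(v)\le 5$, so $\min(|A|,|B|)\le 2$ always: either $G$ has a vertex of degree at most two, or the split produces a vertex of degree exactly three and your degree-3 analysis (in both subcases) finishes the proof. The case $|A|,|B|\ge 3$ simply never occurs, and no argument about $K_7$ is needed.
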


\begin{proof}
Note that $\Delta(H) \leq 5$. If $G$ has no vertex 
of degree at most two, then the vertex split produces a vertex of degree three. 
A $\yt$ move on the degree three vertex produces $G'$ which is of the form $H+e$.
\end{proof}

\begin{corollary}
\label{cor:deg2}
Suppose $G$ is nIK and a size 22 expansion of a nIK Heawood family graph.
Then either $G$ has a vertex of degree at most two or $G$ is in the $H_8 + e$ family. 
\end{corollary}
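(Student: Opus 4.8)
The plan is to split on the two ways of forming a size 22 expansion of a nIK Heawood family graph $H$: either adding an edge, giving $G = H+e$, or splitting a vertex of $H$. Since every Heawood family graph has size 21, each operation raises the size by exactly one, so a size 22 expansion is of one of these two types and there are no other cases to consider.

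In the edge-addition case, $G = H+e$ with $H$ in the Heawood family, and $G$ is nIK by hypothesis. This is precisely the setting of Corollary~\ref{cor:29nIK}, which places $G$ among the 29 nIK graphs of the $H_8+e$ family. This case therefore lands immediately in the desired conclusion, with no vertex of degree at most two needed.

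In the vertex-splitting case, I would invoke Lemma~\ref{lem:deg2}, which yields a trichotomy: either $G$ has a vertex of degree at most two, or $G$ lies in the $H_8+e$, $E_9+e$, or $H_9+e$ family. If $G$ has a vertex of degree at most two we are already in the first alternative of the corollary and there is nothing further to prove. Otherwise $G$ belongs to one of the three named families, and here I would use the nIK hypothesis to eliminate two of them: as established earlier in this section, the $E_9+e$ family consists of 110 graphs, all IK, and the $H_9+e$ family consists of 5 graphs, all IK. Since $G$ is nIK it can belong to neither, so it must lie in the $H_8+e$ family, completing this case.

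The argument is essentially a bookkeeping combination of the preceding lemma and corollary, so I do not anticipate a genuine obstacle. The one point demanding care is recognizing that the nIK hypothesis is exactly what collapses the trichotomy of Lemma~\ref{lem:deg2} to the single family $H_8+e$: because both the $E_9+e$ and $H_9+e$ families contain only IK graphs, the assumption that $G$ is nIK is what forces membership in $H_8+e$. Without explicitly recording that those two families are entirely IK, the conclusion would not follow.
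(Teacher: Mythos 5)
Your proposal is correct and follows exactly the route the paper intends: the paper states this corollary without proof, and the implicit argument is precisely your case split into edge addition (handled by Corollary~\ref{cor:29nIK}) and vertex splitting (handled by Lemma~\ref{lem:deg2}, with the trichotomy collapsed by the previously established facts that the $E_9+e$ and $H_9+e$ families consist entirely of IK graphs). Your closing observation, that the nIK hypothesis is what eliminates those two families, is exactly the point that makes the corollary work.
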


\begin{theorem}
\label{thm:23to29}
Let $G$ be size 23 MMIK with a minor that is
a nIK Heawood family graph. Then $G$ is an expansion of 
one of the 29 nIK graphs in the $H_8+e$ family.
\end{theorem}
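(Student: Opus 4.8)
The plan is to interpolate a graph of size $22$ between the Heawood family minor $H$ (size $21$) and $G$ (size $23$). Since $G$ is MMIK, Lemma~\ref{lem:delta3} gives $\delta(G)\ge 3$; as $G$ exceeds $H$ by only two edges, no vertex can be deleted in passing from $G$ to $H$ (deleting a vertex would cost at least three edges), so $H$ is obtained from $G$ by edge deletions and contractions alone. Equivalently, $G$ is a size-$23$ expansion of $H$ built from two single-step moves, and there is an intermediate graph $G_1$ of size $22$ with $H\preceq G_1\preceq G$. As a proper minor of the MMIK graph $G$, the graph $G_1$ is nIK, and it is a size-$22$ expansion of the nIK Heawood family graph $H$, so Corollary~\ref{cor:deg2} applies: either $G_1$ lies in the $H_8+e$ family, or $G_1$ has a vertex of degree at most two.

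If $G_1$ is in the $H_8+e$ family, then being nIK it is one of the $29$ nIK graphs of that family, and since $G$ is a single expansion of $G_1$ we are finished. All the difficulty is therefore in the second alternative, which I expect to be the main obstacle. Here I would first pin down the structure. Because $\delta(H)\ge 3$, adding a single edge to $H$ cannot create a vertex of degree at most two, so $G_1$ must instead arise by splitting a vertex $u$ of $H$ into $u_1,u_2$, and the low-degree vertex $w$ is one of these; say $w=u_2$ with $N_{G_1}(w)=\{u_1,a\}$ for some $a\in N_H(u)$. Next I would rule out the second move being another vertex split: splitting any vertex of $G_1$ leaves $w$ (or, if $w$ itself is split, one of its parts) of degree at most two, forcing $\delta(G)\le 2$ and contradicting Lemma~\ref{lem:delta3}. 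Hence the move $G_1\to G$ is an edge addition $e=wz$ incident to $w$, raising its degree to exactly three, so $N_G(w)=\{u_1,a,z\}$.

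It remains to analyze this configuration, and the two partial results on low degree finish the job. If $z$ is adjacent in $G$ to $u_1$ or to $a$ --- equivalently, $z\in N_H(u)$ or $az\in E(H)$ --- then $w$ is a degree-three vertex lying on a triangle, i.e.\ a $\bar{Y}$, and Lemma~\ref{lem:ybar} shows $G$ is not MMIK, against our hypothesis. Otherwise $w$ lies on no triangle, and contracting the split edge $u_1w$ merges $w$ back into $u$ and yields exactly the graph $H+uz$, a size-$22$ member of the $H+e$ family of which $G$ is a vertex-split expansion. This $H+uz$ is again nIK as a minor of $G$, so Corollary~\ref{cor:29nIK} identifies it as one of the $29$ nIK graphs in the $H_8+e$ family, and $G$ is an expansion of it. In every case $G$ is an expansion of one of these $29$ graphs, as required. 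The delicate point is precisely this last branch: one must verify that the only way the clean ``contract $u_1w$'' reduction can fail is when $w$ sits on a triangle, and that this failure is harmless because it forces a $\bar{Y}$ and hence contradicts minor-minimality.
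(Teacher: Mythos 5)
Your proof is correct and follows essentially the same route as the paper: interpolate a size-22 graph $G_1$ between $H$ and $G$, apply Corollary~\ref{cor:deg2} to reduce to the case of a degree-two vertex, deduce that $G$ must be a vertex split of $H$ followed by an edge addition at the new degree-two vertex, and contract the split edge to land on an $H+e$ graph that Corollary~\ref{cor:29nIK} identifies as one of the 29 nIK graphs. If anything, your write-up is more careful than the paper's, which asserts the existence of the intermediate size-22 graph without justification and writes ``we recognize $H+ac$ as a minor of $G$'' without addressing the possibility that this edge already exists in $H$ (i.e., that the contraction creates a doubled edge) --- precisely the branch you close off with the $\bar{Y}$ argument of Lemma~\ref{lem:ybar}.
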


\begin{proof}
There must be a size 22 graph $G'$ intermediate to
$G$ and the Heawood family graph $H$. That is,
$G$ is an expansion of $G'$, which is an expansion
of $H$. By Corollary~\ref{cor:deg2}, we can
assume $G'$ has a vertex $v$ of degree at most two.

By Lemma~\ref{lem:delta3},
a MMIK graph has minimal degree, $\delta(G) \geq 3$.
Since
$G'$ expands to $G$ by adding an edge or splitting
a vertex, we conclude $v$ has degree two exactly
and $G$ is $G'$ with an edge added at $v$. 
Since $\delta(H) \geq 3$, this means $G'$ is obtained
from $H$ by a vertex split. 

In $G'$, let $N(v) = \{a,b\}$ and let $cv$ be
the edge added to form $G$. Then $H = G'/av$ and
we recognize $H+ac$ as a minor of $G$.
We are assuming $G$ is 
MMIK, so $H+ac$ is nIK and, by Corollary~\ref{cor:29nIK},
one of the 29 nIK graphs in the $H_8+e$ family. Thus,
$G$ is an expansion of $H+ac$, which is one of these 29
graphs, as required. 
\end{proof}

It remains to study the expansions of the 29 nIK graphs
in the $H_8+e$ family. We will give an overview of the 
argument, leaving many of the details to Appendix C.

The size 23 expansions of the 29 size 22 nIK graphs fall
into one of eight families, which we identify by the
number of graphs in the family: $\F_9$, $\F_{55}$
$\F_{174}$, $\F_{183}$, $\F_{547}$, $\F_{668}$,
$\F_{1229}$, and $\F_{1293}$. We list the graphs
in each family in Appendix C.

\begin{theorem} 
\label{thm:Fam8}
If $G$ is a size 23 MMIK expansion of a nIK
Heawood family graph, then $G$ is in one of the
eight families, $\F_9$, $\F_{55}$
$\F_{174}$, $\F_{183}$, $\F_{547}$, $\F_{668}$,
$\F_{1229}$, and $\F_{1293}$.
\end{theorem}

\begin{proof}
By Theorem~\ref{thm:23to29}, $G$ is an expansion 
of $G'$, which is one of the 29 nIK graphs 
in the $H_8+e$ family. As we have seen,
these 29 graphs are ancestors of the six graphs
$T_1, \ldots, T_6$. By Corollary~\ref{cor:Gpe},
we can find the $G'+e$ graphs by looking at the six 
graphs. Given the family listings in Appendix C, 
it is straightforward to verify that each $T_i+e$ is in
one of the eight families. This accounts for
the graphs $G$ obtained by adding an edge to 
one of the 29 nIK graphs in the $H_8+e$ family.

If instead $G$ is obtained by splitting a vertex
of $G'$, we use the strategy of Lemma~\ref{lem:deg2}.
By Lemma~\ref{lem:delta3}, $\delta(G) \geq 3$.
Since $\Delta(G') \leq 5$, 
the vertex split must produce a vertex of degree three. 
Then, a $\yt$ move on the degree three vertex produces
$G''$ which is of the form $G'+e$. Thus $G$ is a cousin 
of $G'+e$ and must be in one of the eight families.
\end{proof}

To complete our argument that there is no size 23 MMIK
graph with a nIK Heawood family minor, we
argue that there are no MMIK graphs in the 
eight families $\F_9, \F_{55}, \ldots, \F_{1293}$.
In large part our argument is based on two criteria
that immediately show a graph $G$ is not MMIK.
\begin{enumerate}
    \item $\delta(G) < 3$, see Lemma~\ref{lem:delta3}. 
    \item By deleting an edge, there is a proper minor $G-e$ that is an IK graph in the $H_8+e$, $E_9+e$,
    or $H_9+e$ families. In this case $G$ is IK, but not MMIK.
\end{enumerate}
By Lemma~\ref{lem:MMIK}, if $G$ has an ancestor
that satisfies criterion 2, then $G$ is also not MMIK.
By Lemma~\ref{lem:tyyt}, if $G$ has a nIK descendant, then
$G$ is also not nIK. 

\begin{theorem}There is no MMIK graph in the $\F_9$ family.
\end{theorem}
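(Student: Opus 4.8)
The plan is to show that each of the nine graphs in the $\F_9$ family fails at least one of the two criteria stated just above, so that none can be MMIK. Since $\F_9$ is closed under $\ty$ and $\yt$ moves and is connected as a set of cousins, I would not analyze the nine graphs in isolation but instead exploit the move-structure of the family so that a verification at one graph propagates to its relatives. Concretely, I would first list the nine graphs (from the appendix) together with their $\ty/\yt$ adjacencies, presenting $\F_9$ as a connected cousin-graph, and record $\delta$ for each vertex. Every graph with $\delta < 3$ is eliminated at once by criterion 1 via Lemma~\ref{lem:delta3}.

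For the remaining graphs, those with $\delta \geq 3$, I would apply criterion 2: exhibit an edge $e$ whose deletion yields a proper minor $G-e$ that is IK and lies in one of the three known IK families $H_8+e$, $E_9+e$, or $H_9+e$, which forces $G$ to be IK but not MMIK. Rather than carry out this search at every graph, I would anchor it at a small number of graphs in $\F_9$ and then spread the conclusion through the cousin-graph using the two propagation lemmas already set up in the excerpt: by Lemma~\ref{lem:MMIK}, any descendant of a graph satisfying criterion 2 is again IK and not MMIK, while by Lemma~\ref{lem:tyyt} any graph that has a nIK descendant must itself be nIK, hence not MMIK. A finite traversal of the cousin-graph should then account for all nine graphs, each one killed either directly by criterion 1 or 2, or by having a criterion-2 ancestor or a nIK descendant.

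The main obstacle is the recognition step inside criterion 2: deciding whether a given $G-e$ is one of the listed IK graphs requires a graph-isomorphism comparison against the explicitly enumerated members of the $H_8+e$ family (125 graphs), the $E_9+e$ family (110 graphs), and the $H_9+e$ family (5 graphs). At this scale the matching is mechanical but not hand-checkable, and this is exactly the point at which the argument leans on computer support, using the graph6 data recorded in the appendix. Once the relevant $G-e$ have been identified with listed IK cousins, the rest of the proof for $\F_9$ is pure bookkeeping: confirming that the two criteria, together with the ancestor/descendant implications of Lemmas~\ref{lem:MMIK} and~\ref{lem:tyyt}, cover every vertex of the nine-graph cousin-graph, so that no graph in $\F_9$ is MMIK.
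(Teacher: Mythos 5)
Your proposal is correct and matches the paper's argument: the paper eliminates four of the nine graphs by criterion 1 ($\delta(G)=2$, Lemma~\ref{lem:delta3}), and shows the remaining five are descendants of a single graph satisfying criterion 2 (an edge deletion yields an IK graph in the $H_9+e$ family), so none are MMIK by Lemma~\ref{lem:MMIK}. Your plan of anchoring criterion 2 at a few graphs and propagating through the cousin structure via Lemmas~\ref{lem:MMIK} and~\ref{lem:tyyt} is exactly this strategy, stated slightly more generally.
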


\begin{proof}
Four of the nine graphs satisfy the first criterion, $\delta(G) = 2$, and these are 
not MMIK by Lemma~\ref{lem:delta3}. The remaining 
graphs are descendants of a graph $G$ 
that is IK but not MMIK.
Indeed, $G$ satisfies criterion 2: by deleting an edge, we recognize $G-e$ as
an IK graph in the $H_9+e$ family (see Appendix C
for details). By 
Lemma~\ref{lem:MMIK}, $G$ and its descendants are also
not MMIK. 
\end{proof}

\begin{theorem}There is no MMIK graph in the $\F_{55}$ family.
\end{theorem}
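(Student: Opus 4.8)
The plan is to follow the template established for the $\F_9$ family, sorting the 55 graphs by which argument disposes of each. First I would read off the minimal degree of every graph in the family from its graph6 listing in the appendix. Each graph $G$ with $\delta(G) = 2$ is not MMIK by Lemma~\ref{lem:delta3}, and this clears a first block of graphs without further work.

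For the graphs with $\delta(G) \geq 3$ the strategy is to produce a short list of \emph{source} graphs, each an ancestor of many of the remaining graphs and each satisfying criterion~2: deleting some edge $e$ gives $G - e$ that is IK and already catalogued in one of the families $H_8+e$, $E_9+e$, or $H_9+e$. Such a source is then IK but not MMIK, because $G - e$ is a proper IK minor. I would locate these sources by walking the $\ty/\yt$ poset of the family recorded in the appendix and, for each ancestral candidate, checking every single-edge deletion against the catalogued IK graphs of the three families. Once a source is confirmed, Lemma~\ref{lem:MMIK} propagates the conclusion downward: since $\ty$ preserves IK (Lemma~\ref{lem:tyyt}) and the MMIK property moves backward along $\ty$ moves, every descendant of a non-MMIK IK source is itself IK and not MMIK.

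The main obstacle is not conceptual but organizational. Where a single source sufficed for $\F_9$, the family $\F_{55}$ is large enough that several sources are likely required, and the crux is to verify that the $\delta = 2$ graphs together with the descendant-closures of the chosen sources exhaust all 55 graphs. Because descendants of an IK source are always IK, any genuinely nIK graph with $\delta(G) \geq 3$ would escape this scheme and would instead need a direct knotless embedding as a certificate, exactly as supplied for $H_1$, $H_2$, and $T_6$ elsewhere in the paper. The coverage check, the per-graph edge-deletion tests against the three catalogues, and any such embedding certificates are where the computer support enters and where an oversight would most easily hide.
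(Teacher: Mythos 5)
Your proposal takes essentially the same route as the paper: criterion 1 via Lemma~\ref{lem:delta3}, criterion 2 (single-edge deletion landing on a catalogued IK graph in the $H_8+e$, $E_9+e$, or $H_9+e$ families), and downward propagation of non-MMIK-ness from IK sources via Lemma~\ref{lem:MMIK}. The only differences are bookkeeping facts you could not have known: in $\F_{55}$ every graph has $\delta(G) \geq 3$ (so your first block is empty --- the paper notes this is the unique family with that property), 53 of the 55 graphs satisfy criterion 2 directly, and the remaining two are children of a common IK-but-not-MMIK parent, so no knotless-embedding certificates are needed.
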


\begin{proof}
All graphs in this family have $\delta(G) \geq 3$, 
so none satisfy the first criterion. 
All but two of the graphs in this family are not MMIK
by the second criterion. The remaining two graphs 
have a common parent that is IK but not MMIK. By 
Lemma~\ref{lem:MMIK}, these last two graphs are also 
not MMIK. See Appendix C for details.
\end{proof}

We remark that $\F_{55}$ is the only one of the eight 
families that has no graph with $\delta(G) < 3$.

\begin{theorem}There is no MMIK graph in the $\F_{174}$ family.
\end{theorem}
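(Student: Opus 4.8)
The plan is to follow the template just used for the $\F_9$ and $\F_{55}$ families: partition the 174 cousins according to which of the two standard criteria eliminates each graph, and use Lemma~\ref{lem:MMIK} to sweep up any graph not handled by a direct criterion check. First I would compute $\delta(G)$ for every graph in the family and discard, by criterion~1 (Lemma~\ref{lem:delta3}), each graph with $\delta(G) < 3$. Since every family in this classification except $\F_{55}$ contains graphs of minimal degree two, I expect a block of the 174 graphs to drop out immediately this way.

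For the graphs with $\delta(G) \geq 3$, I would next apply criterion~2. For each such $G$ I search for an edge $e$ whose deletion gives a graph $G - e$ that is an \emph{IK} member of one of the three reference families $H_8+e$, $E_9+e$, or $H_9+e$. Any such $G$ is IK but not MMIK and is discarded. Because these three $+e$ families have already been completely enumerated and their IK/nIK split is known (all 110 graphs of the $E_9+e$ family are IK, 96 of the 125 graphs in $H_8+e$ are IK, and all five of $H_9+e$ are IK), this reduces to a finite isomorphism check: match each candidate $G - e$, in graph6 form, against the known IK graphs of those families. The matches are recorded in the appendix.

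Whatever graphs survive both tests should, exactly as in the $\F_9$ and $\F_{55}$ arguments, turn out to be descendants of a small number of graphs that are themselves IK but not MMIK. For each surviving graph I would trace back along $\yt$ moves to such an ancestor $G_0$ --- one that does satisfy criterion~2 --- and then invoke Lemma~\ref{lem:MMIK}: since $G_0$ is IK and not MMIK, no descendant of $G_0$ obtained by $\ty$ moves can be MMIK either. If it is more convenient to argue nIK forward instead, Lemma~\ref{lem:tyyt} propagates nIK along $\ty$ moves in the descendant direction. Combining the three steps shows that no graph in $\F_{174}$ is MMIK.

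The main obstacle is the bookkeeping in criterion~2: for each of the many graphs passing the degree test I must exhibit a specific edge deletion that lands inside one of the three reference families and confirm it sits on the IK side of that family's split. The real work is organizing these cases so that the residual graphs cluster under just a few common ancestors --- so that one application of Lemma~\ref{lem:MMIK} clears a whole block of descendants at once, rather than forcing an individual argument for all 174 graphs. This is precisely the portion I would relegate to the appendix.
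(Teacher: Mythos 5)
Your first three steps (criterion 1, criterion 2, and the Lemma~\ref{lem:MMIK} sweep of descendants of IK-but-not-MMIK graphs) do match the paper's argument, which eliminates all but 51 graphs by minimal degree, all but 17 of those by an edge deletion landing on a known IK graph, and 11 of those 17 as descendants of a single criterion-2 graph. The gap is your assumption that \emph{every} survivor clusters under such an ancestor. In $\F_{174}$ this fails: six of the seventeen survivors are not descendants of any IK-but-not-MMIK graph; in fact these six are themselves nIK, so they have no IK ancestor at all, and no application of Lemma~\ref{lem:MMIK} can reach them.

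Your fallback for such graphs is stated in the wrong direction. Lemma~\ref{lem:tyyt} propagates nIK along $\yt$ moves, i.e., from a graph to its \emph{ancestors}; it does not push nIK ``forward'' to descendants. Indeed $\ty$ does not preserve nIK (equivalently, $\yt$ does not preserve IK), which is exactly why the lemma is asymmetric, so the principle you invoke is false as written. The paper handles the six remaining graphs using the correct direction: it exhibits two nIK \emph{descendants} of these graphs, each having a degree-2 vertex whose edge contraction yields a graph homeomorphic to one of the 29 nIK graphs in the $H_8+e$ family; Lemma~\ref{lem:tyyt} then pulls nIK back up from those descendants to the six graphs, showing they are nIK and hence not MMIK. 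This step --- locating suitable nIK descendants and identifying them, via the degree-2 contraction, with known nIK graphs --- is the content missing from your proposal, and without it the six graphs remain undisposed of.
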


\begin{proof}
All but 51 graphs are not MMIK by the first criterion. Of 
the remaining graphs, all but 17 are not MMIK by the second
criterion. Of these, 11 are descendants of a graph 
$G$ that is IK but not MMIK by the second criterion. 
By Lemma~\ref{lem:MMIK}, these 11 are also not MMIK.
This leaves six graphs. For these we find two nIK
descendants. By Lemma~\ref{lem:tyyt} the remaining
six are also not MMIK. Both descendants have a degree
2 vertex. On contracting an edge of the degree 2 vertex,
we obtain a homeomorphic graph that is one of the 
29 nIK graphs in the $H_8+e$ family.
\end{proof}

\begin{theorem}There is no MMIK graph in the $\F_{183}$ family.
\end{theorem}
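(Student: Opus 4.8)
The plan is to follow the same template established for the $\F_9$, $\F_{55}$, and $\F_{174}$ families: partition the $183$ graphs of $\F_{183}$ into those eliminated by the first criterion ($\delta(G) < 3$), those eliminated by the second criterion (a proper minor $G-e$ that is an IK graph in one of the three reference families), those eliminated by propagating non-MMIK status backward along $\ty$ moves via Lemma~\ref{lem:MMIK}, and finally a residual handful of graphs eliminated by exhibiting a nIK descendant and invoking Lemma~\ref{lem:tyyt}. First I would sort the family by minimal degree, discarding every graph with $\delta(G) = 2$ by Lemma~\ref{lem:delta3}. For each surviving graph I would then search over single edge deletions, checking whether any $G-e$ is isomorphic to a known IK graph in the $H_8+e$, $E_9+e$, or $H_9+e$ families; each such match eliminates $G$ by the second criterion.

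Next, for the graphs not caught directly by either criterion, I would look upward along $\yt$ moves (equivalently, treat them as descendants of a parent obtained by a $\ty$ move). If a common ancestor is IK but satisfies criterion~2, then Lemma~\ref{lem:MMIK} shows that the ancestor and all its descendants fail to be MMIK, which lets me sweep out a whole block of $\F_{183}$ at once. This is exactly the mechanism used in the $\F_9$ proof, where all graphs without a degree-2 vertex were descendants of a single IK-but-not-MMIK graph. The expectation is that the bulk of $\F_{183}$ is dispatched this way, leaving only a small set of stubborn graphs.

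For that residual set I would apply the descendant argument in the other direction: exhibit one or two explicit nIK descendants of each remaining graph. By Lemma~\ref{lem:tyyt}, a graph with a nIK descendant is itself nIK, hence certainly not MMIK. As in the $\F_{174}$ argument, I expect these nIK descendants to carry a degree-$2$ vertex, so that contracting an edge at that vertex yields a homeomorphic graph already known to be one of the $29$ nIK graphs in the $H_8+e$ family; this certifies the descendant is nIK without a fresh embedding computation. With every graph accounted for, the conclusion follows.

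The main obstacle is purely bookkeeping and verification rather than a conceptual difficulty: with $183$ graphs, organizing the case split so that every graph lands in exactly one bucket, and confirming each claimed minor isomorphism or nIK descendant, is delicate and relies on the explicit family listing deferred to the appendix. In particular, the step of matching each $G-e$ against the IK graphs of the three reference families, and identifying the right common ancestors for the Lemma~\ref{lem:MMIK} propagation, is where an error could hide. As in the earlier families, I would relegate the detailed graph6 certificates and the explicit edge choices to the appendix, stating in the body only the partition into the four elimination mechanisms. I therefore anticipate a proof of the form: all but $N_1$ graphs fail the first criterion; of the remaining, all but $N_2$ fail the second; of those, $N_3$ are descendants of IK-but-not-MMIK graphs handled by Lemma~\ref{lem:MMIK}; and the final few are dispatched by nIK descendants via Lemma~\ref{lem:tyyt}.
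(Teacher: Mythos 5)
Your plan is correct, and its very first step is the paper's entire proof: every one of the graphs in $\F_{183}$ has a vertex of degree at most two, so Lemma~\ref{lem:delta3} (your first criterion) eliminates the whole family at once. The other three elimination mechanisms you prepared — criterion two, propagation via Lemma~\ref{lem:MMIK}, and nIK descendants via Lemma~\ref{lem:tyyt} — turn out to be vacuous for this family, since no graph survives the minimal-degree check.
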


\begin{proof}
All graphs in this family have a vertex of degree two or 
less and are not MMIK by the first criterion.
\end{proof}

\begin{theorem}There is no MMIK graph in the $\F_{547}$ family.
\end{theorem}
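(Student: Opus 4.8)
The plan is to follow the same two-criteria strategy that dispatched the earlier families $\F_9, \F_{55}, \F_{174}$, and $\F_{183}$, adapting the bookkeeping to the 547 graphs of $\F_{547}$. First I would sort the 547 graphs by minimal degree. By the first criterion (Lemma~\ref{lem:delta3}), every graph with $\delta(G) < 3$ is immediately not MMIK, so I would set these aside and record how many remain. Since every family except $\F_{55}$ contains graphs with a degree 2 vertex, I expect a substantial fraction to be eliminated at this stage; the appendix tables would carry the precise count.

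For the graphs that survive with $\delta(G) \geq 3$, I would apply the second criterion: attempt to delete an edge $e$ so that $G - e$ is recognized as an IK graph in one of the three reference families $H_8+e$, $E_9+e$, or $H_9+e$. Any such $G$ is IK but not MMIK. Because the reference families are already classified (the $E_9+e$ family has 110 IK graphs, the $H_8+e$ family has 96 IK graphs, and the $H_9+e$ family is entirely IK), this is a finite, mechanical check of edge-deletions against known graph6 data. I would tabulate which graphs fall this way directly, and which fall only after passing to an ancestor — invoking Lemma~\ref{lem:MMIK} to push the non-MMIK conclusion backward along $\ty$ moves for the latter.

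The step I expect to be the main obstacle is the residual set: the graphs with $\delta(G) \geq 3$ that neither themselves satisfy criterion 2 nor have an ancestor satisfying it. For these I would hunt for a nIK descendant, as was done for $\F_{174}$. By Lemma~\ref{lem:tyyt}, if $G$ has a nIK descendant then $G$ is not nIK's opposite — that is, $G$ cannot be MMIK, since an MMIK graph is IK while all its proper minors are nIK, and a $\ty$ descendant being nIK would force $G$ itself to be nIK. Concretely I anticipate these stubborn descendants to carry a degree 2 vertex, so that contracting an edge at that vertex yields a homeomorphic graph identifiable as one of the 29 nIK graphs in the $H_8+e$ family, pinning down its nIK status explicitly and completing the elimination.

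The proof would therefore conclude by asserting that every graph in $\F_{547}$ has been accounted for by one of these three mechanisms, with the detailed graph6 listings and edge-deletion certificates deferred to the appendix, exactly as in the preceding family arguments. The only genuine risk is a miscount in partitioning the 547 graphs across the criteria, so I would cross-check that the numbers eliminated at each stage sum to 547.
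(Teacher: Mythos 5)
Your overall architecture matches the paper's proof almost exactly through its first three stages: criterion one (Lemma~\ref{lem:delta3}) eliminates all but 229 of the 547 graphs, criterion two eliminates all but 52 of those, Lemma~\ref{lem:MMIK} disposes of 25 more as ancestors of criterion-two graphs, and a hunt for nIK descendants via Lemma~\ref{lem:tyyt} then handles all but five of the remaining 27. Up to that point your plan is the paper's plan, and your reading of Lemma~\ref{lem:tyyt} (a nIK descendant forces $G$ itself to be nIK) is correct.

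The gap is in your final mechanism. You anticipate that every stubborn residual graph has a nIK descendant carrying a degree 2 vertex which contracts to one of the 29 known nIK graphs in the $H_8+e$ family --- precisely the mechanism that closed out $\F_{174}$. For $\F_{547}$ this fails: five graphs remain whose nIK status cannot be traced, by any sequence of $\ty$/$\yt$ moves and contractions, to a previously identified nIK graph. The paper resolves them with a genuinely new ingredient: it exhibits explicit knotless embeddings of two of these graphs (Figure~\ref{fig:547UK}), verified by software that checks every cycle is unknotted, and then notes that the other three are ancestors of these two, hence nIK by Lemma~\ref{lem:tyyt}. Your toolkit --- the two criteria, ancestor propagation via Lemma~\ref{lem:MMIK}, and identification of descendants with an existing catalogue of nIK graphs --- cannot account for these five; your own consistency check that the eliminations sum to 547 would flag them, but the proposal offers no method to finish them. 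The missing idea is that at the bottom of the elimination one must construct and certify fresh knotless embeddings rather than reduce everything to known graphs.
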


\begin{proof}
All but 229 of the graphs in the family are not MMIK
by criterion one. Of those, all but 52 are not MMIK by
criterion two. Of those, 25 are ancestors of one 
of the graphs meeting criterion two and are not MMIK
by Lemma~\ref{lem:MMIK}. For the remaining 27 graphs,
all but five have a nIK descendant and are not IK
by Lemma~\ref{lem:tyyt}. For the remaining five,
three are ancestors of one of the five. In 
Figure~\ref{fig:547UK} we give knotless embeddings
of the other two graphs. Using Lemma~\ref{lem:tyyt},
all five graphs are nIK, hence not MMIK.
\end{proof}

\begin{figure}[htb]
\centering
\includegraphics[scale=1]{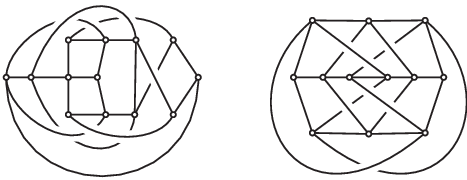}
\caption{Knotless embeddings of two graphs in $\F_{547}$.} 
\label{fig:547UK}
\end{figure}

\begin{theorem}There is no MMIK graph in the $\F_{668}$ family.
\end{theorem}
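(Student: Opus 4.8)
The plan is to follow the same elimination template established for the families $\F_9$ through $\F_{547}$, applying the two criteria and the propagation lemmas in sequence until no candidate MMIK graph remains. Since $\F_{668}$ is, like most of the eight families, expected to contain many graphs with a low-degree vertex, I would begin by discarding every graph $G$ with $\delta(G) < 3$; by Lemma~\ref{lem:delta3} none of these can be MMIK. This first pass should remove the large majority of the 668 graphs, leaving a much smaller set of candidates, all of which have $\delta(G) \geq 3$.

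Next I would apply criterion 2 to the survivors: for each remaining $G$, search for an edge $e$ so that $G-e$ is a known IK graph in one of the three families $H_8+e$, $E_9+e$, or $H_9+e$. Any such $G$ is IK but not MMIK and can be discarded. For the graphs that still remain, I would invoke the two propagation lemmas. By Lemma~\ref{lem:MMIK}, any graph that is a descendant of an IK-but-not-MMIK graph (one meeting criterion 2) is itself not MMIK, so I would identify such ancestors among the already-eliminated graphs and remove their descendants. Dually, by Lemma~\ref{lem:tyyt}, any graph possessing a nIK descendant is itself nIK, hence not MMIK; so for the last stubborn candidates I would search their descendants for a nIK graph, typically one with a degree-two vertex that, on contracting an edge at that vertex, becomes one of the 29 nIK graphs in the $H_8+e$ family.

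The hard part will be the final base cases: a small number of graphs that survive all of the automatic criteria and for which neither an IK minor of the right type nor a conveniently nIK descendant is immediately visible. For these I would exhibit explicit knotless embeddings, exactly as was done for the two residual graphs of $\F_{547}$ in Figure~\ref{fig:547UK}, verifying by computer (using the software of~\cite{NW}) that every cycle of each embedding is unknotted. Each such embedding certifies that the graph is nIK, and Lemma~\ref{lem:tyyt} then propagates nIK back to its ancestors within the family. Once every graph in $\F_{668}$ has been shown either to fail $\delta \geq 3$, to be IK but not MMIK, or to be nIK, we conclude that the family contains no MMIK graph.
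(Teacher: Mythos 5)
Your proposal is correct and follows essentially the same elimination sequence as the paper's proof: criterion~1 ($\delta(G)<3$), criterion~2 (an edge-deletion minor that is IK in the $H_8+e$, $E_9+e$, or $H_9+e$ family), propagation of ``not MMIK'' via Lemma~\ref{lem:MMIK}, elimination of graphs with nIK descendants via Lemma~\ref{lem:tyyt}, and finally explicit knotless embeddings for the residual graphs with nIK pushed back to their ancestors. The paper instantiates this with specific counts (283 survive criterion~1, then 56, then 33, then 3, of which two are ancestors of the third, so a single embedding in Figure~\ref{fig:668UK} finishes the family), but the logical structure is exactly what you describe.
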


\begin{proof}
All but 283 of the graphs in the family are not MMIK
by criterion one. Of those, all but 56 are not MMIK by
criterion two. Of those, 23 are ancestors of one 
of the graphs meeting criterion two and are not MMIK
by Lemma~\ref{lem:MMIK}. For the remaining 33 graphs
all but three have a nIK descendant and are not IK
by Lemma~\ref{lem:tyyt}. Of the remaining three,
two are ancestors of the third. Figure~\ref{fig:668UK} 
is a knotless embedding of the common descendant. By 
Lemma~\ref{lem:tyyt} all three of these graphs are nIK, hence not MMIK.
\end{proof}

\begin{figure}[htb]
\centering
\includegraphics[scale=1]{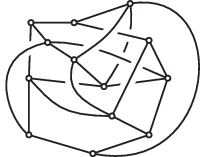}
\caption{Knotless embedding of a graph in $\F_{668}$.} 
\label{fig:668UK}
\end{figure}

\begin{theorem}There is no MMIK graph in the $\F_{1229}$ family.
\end{theorem}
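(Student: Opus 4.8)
The plan is to follow the same two-criterion sieve used for the previous seven families, processing the 1229 graphs in decreasing order of how cheaply each can be dispatched. First I would compute the minimal degree of every graph in the family and discard, by criterion 1 and Lemma~\ref{lem:delta3}, all those with $\delta(G) < 3$. Since the size is fixed at 23 and these graphs are expansions of the comparatively sparse graphs in the $H_8+e$ family, I expect this step alone to remove the large majority, as it did for $\F_{547}$ and $\F_{668}$.

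For the survivors, all of which have $\delta(G) \geq 3$, I would apply criterion 2: for each such $G$ I would test its single-edge-deletion minors $G-e$ for a match against the known IK graphs of the $H_8+e$, $E_9+e$, and $H_9+e$ families. Any $G$ admitting such a minor is IK but not MMIK and is removed. I would then push this backwards along $\ty$ moves: by Lemma~\ref{lem:MMIK}, any graph that is an ancestor of a graph already dispatched by criterion 2 is itself not MMIK, and I would eliminate these as well.

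Whatever remains I would attack with $\ty$/$\yt$ families. Guided by the $\F_{174}$, $\F_{547}$, and $\F_{668}$ arguments, I anticipate the terminal graphs cluster into a small number of $\ty$-families, each containing a descendant that is nIK; exhibiting a knotless embedding of one representative descendant in each family and invoking Lemma~\ref{lem:tyyt} then forces the whole family to be nIK, hence to contain no MMIK graph. As in the $\F_{174}$ case, I expect these terminal descendants to carry a degree-two vertex, so that contracting an edge at that vertex produces a homeomorphic graph coinciding with one of the 29 nIK graphs in the $H_8+e$ family, which supplies an independent check on each embedding.

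The main obstacle is this last step: the handful of graphs that survive both direct criteria and the ancestor propagation admit no purely combinatorial shortcut, and for them I would have to produce explicit knotless embeddings and verify, using the cycle-checking software of~\cite{NW}, that every cycle is unknotted. Because $\F_{1229}$ is the second-largest of the eight families, I also expect the bookkeeping of criterion 2---matching the many $G-e$ minors against the three reference families---to be the most computationally demanding portion of the argument, even though it is conceptually routine.
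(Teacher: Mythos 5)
Your sieve---criterion 1, criterion 2, then propagating non-minimality backwards along $\ty$ moves via Lemma~\ref{lem:MMIK}---matches the paper's proof through its first three stages, but your endgame is wrong for this particular family. You anticipate that the graphs surviving all three stages will be nIK, and you plan to certify this by exhibiting knotless embeddings of suitable descendants, as was done for $\F_{174}$, $\F_{547}$, and $\F_{668}$. For $\F_{1229}$ that cannot work: after criterion 1 (which here removes only 268 of the 1229 graphs, a minority, contrary to your expectation), criterion 2 (961 down to 140), and ancestor propagation (140 down to 3), the three remaining graphs are in fact intrinsically knotted---each has an IK proper minor obtained by \emph{contracting} an edge. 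Since a graph containing an IK minor is itself IK, these three graphs, and by Lemma~\ref{lem:tyyt} all of their descendants, are IK; no knotless embedding of any of them exists, so the embedding-plus-software step you propose as the final blow can never succeed.

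The repair is small but essential: the last three graphs are eliminated not by showing they are nIK but by showing they fail minor-minimality from the other side. Criterion 2 as you (and the paper) state it only tests edge deletions $G-e$; the paper's proof closes the argument by allowing edge contractions $G/e$ as the witnessing IK proper minor as well. With that extension the three survivors are dispatched immediately. As written, your plan would stall at the final step, searching for knotless embeddings that do not exist.
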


\begin{proof}
There are 268 graphs in the family that are not MMIK
by criterion one. Of the remaining 961 graphs, all but 
140 are not MMIK by criterion two. Of those, all but three are
ancestors of one of the graphs meeting criterion two and are not MMIK
by Lemma~\ref{lem:MMIK}. The remaining three graphs have an IK minor
by contracting an edge and are, therefore, not MMIK.
\end{proof}

\begin{theorem}There is no MMIK graph in the $\F_{1293}$ family.
\end{theorem}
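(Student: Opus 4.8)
The plan is to follow the same template used for the preceding seven families, adapting the bookkeeping to the $1293$ graphs of the $\F_{1293}$ family. First I would apply criterion one, discarding every graph $G$ in the family with $\delta(G) < 3$; by Lemma~\ref{lem:delta3} none of these is MMIK. This is expected to eliminate the large majority of the $1293$ graphs, as it did in each of the earlier families (for instance $268$ of the $1229$ graphs in $\F_{1229}$). For the survivors, I would apply criterion two: for each remaining $G$, search for an edge $e$ so that $G-e$ is an IK graph lying in one of the $H_8+e$, $E_9+e$, or $H_9+e$ families; such a $G$ is IK but not MMIK. Since the IK members of these three families are precisely the descendants of the IK Heawood family graphs (via Lemma~\ref{lem:tyyt}), this check is a finite matching against the explicit lists recorded in the appendix.

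After the two criteria are applied directly, a small residue of graphs will remain, and for these I would invoke the two propagation lemmas. By Lemma~\ref{lem:MMIK}, any graph that is an ancestor of a graph already shown to be IK-but-not-MMIK via criterion two is itself not MMIK; so I would first sweep up all such ancestors. Dually, by Lemma~\ref{lem:tyyt}, any graph possessing a nIK descendant is itself nIK, hence not MMIK; so I would look among the descendants of the remaining graphs for a nIK graph. In the earlier families the final holdouts were handled either by exhibiting a nIK descendant whose edge-contraction at a degree-two vertex recovers one of the $29$ nIK graphs in the $H_8+e$ family, or, when no such descendant is available through the combinatorics alone, by directly producing a knotless embedding of a common descendant (as in Figures~\ref{fig:547UK} and \ref{fig:668UK}) and then propagating nIK backward along $\yt$ moves.

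The main obstacle I anticipate is precisely this last core of graphs that escape both criteria and are not ancestors of an already-eliminated graph. For each of these I must certify nIK, and the only fully reliable certificate is an explicit knotless embedding verified by the cycle-checking software referenced earlier (Naimi's program~\cite{NW}). Constructing such embeddings by hand is the genuinely nonroutine step, since it requires finding a spatial realization in which every one of the many cycles is unknotted; once a single well-chosen descendant is embedded knotlessly, Lemma~\ref{lem:tyyt} lets the conclusion propagate to its ancestors, collapsing the remaining count quickly. I would therefore organize the argument so as to minimize the number of independent embeddings needed, choosing common descendants that dominate as many holdouts as possible under the ancestor relation, exactly as was done for $\F_{547}$ and $\F_{668}$. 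With every graph in $\F_{1293}$ thereby shown to be either not IK or IK-but-not-MMIK, none is MMIK, completing the proof and, together with the preceding seven family results, establishing that no size $23$ MMIK graph has a nIK Heawood family minor.
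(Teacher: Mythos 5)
Your first three steps match the paper's proof exactly: criterion one eliminates 570 of the 1293 graphs, criterion two eliminates all but 99 of the remaining 723, and Lemma~\ref{lem:MMIK} (sweeping up ancestors of criterion-two graphs) eliminates all but 12 of those 99. But your plan for the final residue has a genuine gap. You assume the holdouts must be nIK and propose to certify this with knotless embeddings (of the graphs themselves or of well-chosen common descendants), following the pattern of $\F_{547}$ and $\F_{668}$. For $\F_{1293}$ this step would fail: the 12 remaining graphs are in fact IK, so no knotless embedding of them exists, and---since the $\ty$ move preserves IK by Lemma~\ref{lem:tyyt}---none of their descendants is nIK either. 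You would be searching for a certificate that does not exist.

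The missing idea is that ``not MMIK'' can also be established by exhibiting an IK proper minor obtained by \emph{contracting} an edge. Your criterion two, like the paper's, only tests edge \emph{deletions} against the IK graphs of the $H_8+e$, $E_9+e$, and $H_9+e$ families; but minor minimality requires all contraction minors to be nIK as well. The paper disposes of the final 12 graphs exactly this way: each has an IK minor obtained by contracting an edge, hence is IK but not minor minimal. (The same device closes the $\F_{1229}$ case, where three graphs survive the first three steps; the nIK-embedding template you borrowed is the right one only for $\F_{547}$ and $\F_{668}$.) So your argument needs the contraction check as a third elimination tool, not a further round of knotless embeddings.
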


\begin{proof}
There are 570 graphs in the family that are not MMIK
by criterion one. Of the remaining 723 graphs, all but 
99 are not MMIK by criterion two. Of those, all but 12 are
ancestors of one of the graphs meeting criterion two and are not MMIK
by Lemma~\ref{lem:MMIK}. The remaining 12 graphs have an IK minor
by contracting an edge and are, therefore, not MMIK.
\end{proof}

\subsection{Expansions of the size 22 graphs $H_1$ and $H_2$}
\label{sec:1122graphs}
We have argued that a size 23 MMIK graph must have a minor that
is either one of six nIK graphs in the Heawood family, or else
one of two $(11,22)$ graphs that we call 
$H_1$: \verb'J?B@xzoyEo?' and $H_2$: \verb'J?bFF`wN?{?' 
(see Figure~\ref{fig:H1H2}).
We treated expansions of the Heawood family
graphs in the previous subsection. In this subsection we show
that $G_1$, $G_2$, and $G_3$ are the only size 23 MMIK expansions
of $H_1$ and $H_2$. Recall that these two graphs were shown 
MMN2A (minor minimal for $2$-apex) in \cite{MP}. The unknotted
embeddings of Figure~\ref{fig:H1H2} demonstrate that they are nIK.

By Lemma~\ref{lem:delta3}, if a vertex split of $H_1$ results in a vertex
of degree less than three, the resulting graph is not MMIK. Since
$H_1$ is $4$-regular, the only other way to make a vertex split
produces adjacent degree 3 vertices. Then, a $\yt$ move on one of the degree 
three vertices yields an $H_1+e$. Thus, a size 
23 MMIK expansion of $H_1$ must be in the family of a $H_1+e$.

Up to isomorphism, there are six $H_1+e$ graphs formed by adding an edge to $H_1$.
These six graphs generate families of size 6, 2, 2, and 1. Three of the six
graphs are in the family of size 6 and there is one each in 
the remaining three families.

All graphs in the family of size six are ancestors of three graphs. 
In Figure~\ref{fig:sixfam} we provide knotless embeddings of those three
graphs. By Lemma~\ref{lem:tyyt}, all graphs in this family are nIK, hence not MMIK.

\begin{figure}[htb]
\centering
\includegraphics[scale=1]{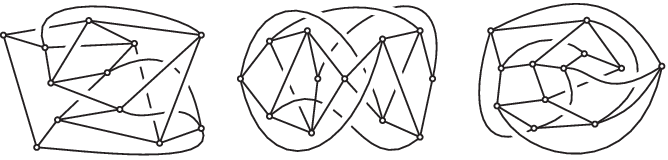}
\caption{Knotless embeddings of three graphs in the size six family from $H_1$.} 
\label{fig:sixfam}
\end{figure}

In a family of two graphs, there is a single $\ty$ move. In Figure~\ref{fig:two2s}
we give knotless embeddings of the children in these two families. 
By Lemma~\ref{lem:tyyt}, all graphs in these two families are nIK, hence not MMIK.

\begin{figure}[htb]
\centering
\includegraphics[scale=1]{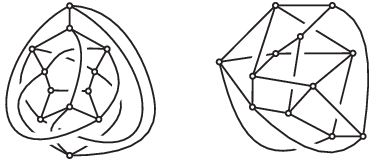}
\caption{Knotless embeddings of two graphs in the size two families from $H_1$.} 
\label{fig:two2s}
\end{figure}

The unique graph in the family of size one is $G_1$. 
In subsection~\ref{sec:3MMIKpf}
we show that this graph is MMIK.
Using the edge list of $G_1$ given above near the beginning of Section~3:
    $$[(0, 4), (0, 5), (0, 9), (0, 10), (1, 4), (1, 6), (1, 7), (1, 10), (2, 3), (2, 4), (2, 5), (2, 9),$$
    $$ (2, 10), (3, 6), (3, 7), (3, 8), (4, 8), (5, 6), (5, 7), (5, 8), (6, 9), (7, 9), (8, 10)],$$
we recover $H_1$ by deleting edge $(2,5)$.

Again, since $H_2$ is $4$-regular, MMIK expansions formed by vertex splits (if any) 
will be in the families of $H_2+e$ graphs. Up to isomorphism, 
there are three $H_2+e$ graphs. These produce a family of size four and another
of size two. 

\begin{figure}[htb]
\centering
\includegraphics[scale=1]{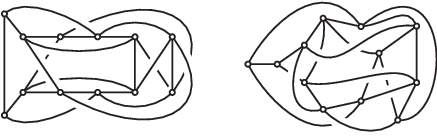}
\caption{Knotless embeddings of two graphs in the size four family from $H_2$.} 
\label{fig:fourfam}
\end{figure}

The family of size four includes two $H_2+e$ graphs. All graphs in the 
family are ancestors of the two graphs that are each shown
to have a knotless embedding in 
Figure~\ref{fig:fourfam}. By Lemma~\ref{lem:tyyt}, all graphs in this family
are nIK, hence not MMIK.

The family of size two consists of the graphs $G_2$ and $G_3$. In 
subsection~\ref{sec:3MMIKpf} we show that these two graphs are MMIK. 
Using the edge list for $G_2$ given above near the beginning of 
Section~3:
   $$[(0, 2), (0, 4), (0, 5), (0, 6), (0, 7), (1, 5), (1, 6), (1, 7), (1, 8), (2, 6), (2, 7), (2, 8),$$
    $$(2, 9), (3, 7), (3, 8), (3, 9), (3, 10), (4, 8), (4, 9), (4, 10), (5, 9), (5, 10), (6, 10)],$$
we recover $H_2$ by deleting edge $(0,2)$.   
As for $G_3$:
    $$[(0, 4), (0, 5), (0, 7), (0, 11), (1, 5), (1, 6), (1, 7), (1, 8), (2, 7), (2, 8), (2, 9), (2, 11),$$
    $$(3, 7), (3, 8), (3, 9), (3, 10), (4, 8), (4, 9), (4, 10), (5, 9), (5, 10), (6, 10), (6, 11)],$$
contracting edge $(6,11)$ leads back to $H_2$.

\section{
\label{sec:ord10}%
Knotless embedding obstructions of order ten.}

In this section, we prove Theorem~\ref{thm:ord10}: there are exactly 35 obstructions to knotless 
embedding of order ten. As in the previous
section, we refer to knotless embedding obstructions as MMIK graphs.
We first describe the 26 graphs given in~\cite{FMMNN,MNPP}
and then list the 9 new graphs unearthed by our computer search.

\subsection{26 previously known order ten MMIK graphs.}
\label{sec:26known}

In~\cite{FMMNN}, the authors describe 264 MMIK graphs. There are three
sporadic graphs (none of order ten), the rest falling into four graph families. 
Of these, 24 have ten vertices and they appear in the families
as follows.

There are three MMIK graphs of order ten in the Heawood family~\cite{KS, GMN, HNTY}:  $H_{10}, F_{10}, $ and $E_{10}$.

In~\cite{GMN}, the authors study the other three families.
All 56 graphs in the $K_{3,3,1,1}$ family are MMIK. 
Of these, 11 have order ten: Cousins 4, 5, 6, 7, 22, 25, 26, 27, 28, 48, and 51. There are 33 MMIK graphs in the family of $E_9+e$. Of these seven have order ten: Cousins 3, 28, 31, 41, 44, 47, and 50. Finally, the family of $G_{9,28}$ includes 156 MMIK graphs. Of these, there are three of
order ten: Cousins 2, 3, and 4.

The other two known MMIK graphs of order ten are described in~\cite{MNPP}, 
one having size 26 and the other size 30.
We remark that the family for the graph of size 26 includes  
both MMIK graphs and graphs with $\delta(G) = 2$. However, 
no ancestor or descendant of a $\delta(G) = 2$ graph is MMIK. 
This is part of our motivation for Question~\ref{que:d3MMIK}


\subsection{Nine new MMIK graphs of order ten}

In this subsection we list the nine additional MMIK graphs that 
we found after a computer search described following the list.
In each case, we use the program
of~\cite{MN} to verify that the graph we found is IK. 
We use the Mathematica implementation of the program 
available at Ramin Naimi's website~\cite{NW}.
To show that the graph is MMIK, we must in addition verify that 
each minor formed by deleting or contracting an edge is nIK. 
Many of these minors are $2$-apex and not IK by Lemma~\ref{lem:2apex}.
There remain 21 minors and below we discuss how we know that those are 
also nIK.

First we list the nine new MMIK graphs of order ten, 
including size, graph6 format~\cite{sage}, and an edge list.

\begin{enumerate}
    \item Size: 25; graph6 format: \verb"ICrfbp{No"
    $$[(0, 3), (0, 4), (0, 5), (0, 6), (1, 4), (1, 5), (1, 6), (1, 7), (1, 8),$$
    $$(2, 5), (2, 6), (2, 7), (2, 8), (2, 9), (3, 6), (3, 7), (3, 8),$$
    $$(3, 9), (4, 7), (4, 8), (4, 9), (5, 8), (5, 9), (6, 9), (7, 9)]$$
    
    \item Size: 25; graph6 format: \verb"ICrbrrqNg"
    $$[(0, 3), (0, 4), (0, 5), (0, 8), (1, 4), (1, 5), (1, 6), (1, 7), (1, 8),$$
    $$(2, 5), (2, 6), (2, 7), (2, 8), (2, 9), (3, 6), (3, 7), (3, 8),$$
    $$(3, 9), (4, 6), (4, 7), (4, 9), (5, 9), (6, 8), (6, 9), (8, 9)]$$
    
    \item Size: 25; graph6 format: \verb"ICrbrriVg"
    $$[(0, 3), (0, 4), (0, 5), (0, 8), (1, 4), (1, 5), (1, 6), (1, 7), (1, 8),$$
    $$(1, 9), (2, 5), (2, 6), (2, 7), (2, 8), (3, 6), (3, 7), (3, 9),$$
    $$(4, 6), (4, 7), (4, 8), (4, 9), (5, 9), (6, 8), (6, 9), (8, 9)]$$
    
    \item Size: 25; graph6 format: \verb"ICrbrriNW"
    $$[(0, 3), (0, 4), (0, 5), (0, 8), (1, 4), (1, 5), (1, 6), (1, 7), (1, 8),$$
    $$(2, 5), (2, 6), (2, 7), (2, 8), (2, 9), (3, 6), (3, 7), (3, 9),$$
    $$(4, 6), (4, 7), (4, 8), (4, 9), (5, 9), (6, 8), (7, 9), (8, 9)]$$
    
    
    \item Size: 27; graph6 format: \verb"ICfvRzwfo"
    $$[(0, 3), (0, 4), (0, 5), (0, 6), (0, 8), (0, 9), (1, 5), (1, 6), (1, 7),$$
    $$(1, 8), (2, 5), (2, 6), (2, 7), (2, 8), (3, 4), (3, 5), (3, 7), (3, 8),$$
    $$(3, 9), (4, 6), (4, 7), (4, 8), (4, 9), (5, 7), (5, 9), (6, 9), (7, 9)]$$
    
    \item Size: 29; graph6 format: \verb"ICfvRr^vo"
    $$[(0, 3), (0, 4), (0, 5), (0, 6), (0, 8), (0, 9), (1, 5), (1, 6), (1, 7), (1, 8),$$
    $$(1, 9), (2, 5), (2, 6), (2, 7), (3, 4), (3, 5), (3, 7), (3, 8), (3, 9), (4, 6),$$
    $$(4, 7), (4, 8), (4, 9), (5, 8), (5, 9), (6, 8), (6, 9), (7, 8), (7, 9)]$$

    
    \item Size: 30; graph6 format: \verb"IQjuvrm^o"
    $$[(0, 2), (0, 4), (0, 5), (0, 6), (0, 7), (0, 8), (1, 3), (1, 5), (1, 6), (1, 7),$$
    $$(1, 8), (1, 9), (2, 4), (2, 5), (2, 7), (2, 8), (2, 9), (3, 5), (3, 6), (3, 7),$$
    $$(3, 9), (4, 6), (4, 7), (4, 8), (4, 9), (5, 8), (5, 9), (6, 8), (6, 9), (7, 9)]$$
    
    \item Size: 31; graph6 format: \verb"IQjur~m^o"
    $$[(0, 2), (0, 4), (0, 5), (0, 6), (0, 8), (1, 3), (1, 5), (1, 6), (1, 7), (1, 8), (1, 9),$$
    $$(2, 4), (2, 5), (2, 7), (2, 8), (2, 9), (3, 5), (3, 6), (3, 7), (3, 9), (4, 6),$$
    $$(4, 7), (4, 8), (4, 9), (5, 7), (5, 8), (5, 9), (6, 7), (6, 8), (6, 9), (7, 9)]$$
    
    \item Size: 32; graph6 format: \verb"IEznfvm|o"
    $$[(0, 3), (0, 4), (0, 5), (0, 6), (0, 7), (0, 8), (0, 9), (1, 3), (1, 4), (1, 5), (1, 6),$$
    $$(1, 7), (1, 8), (1, 9), (2, 4), (2, 5), (2, 6), (2, 7), (2, 8), (2, 9), (3, 6), (3, 7),$$
    $$(3, 9), (4, 5), (4, 7), (4, 8), (5, 8), (5, 9), (6, 7), (6, 8), (6, 9), (7, 9)]$$
    
\end{enumerate}

To complete our argument, it remains to argue that the 21 non 2-apex minors are
nIK. Each of these minors is formed by deleting or contracting an edge 
in one of the nine graphs just listed. Of these minors, 19 have a 
2-apex descendant and are nIK by Lemmas~\ref{lem:2apex} and \ref{lem:tyyt}. In Figure~\ref{fig:ord10}
we give knotless embeddings of the remaining two minors showing 
that they are also nIK.

\begin{figure}[htb]
\centering
\includegraphics[scale=1]{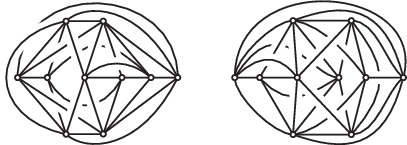}
\caption{Knotless embeddings of two order ten graphs.} 
\label{fig:ord10}
\end{figure}

Let us describe our computer search. A MMIK graph $G$ of order ten must be connected, have 
$\delta(G) \leq 3$ (by Lemma~\ref{lem:delta3}) and have size (number of edges) between 21 and 35.
For, the lower bound on size, see~\cite{M}. The upper bound follows as Mader~\cite{Ma} showed
that a graph with $n \geq 7$ vertices and $5n-14$ or more edges has a $K_7$ minor. 
Since $K_7$ is IK (see \cite{CG}), this means
that a graph of order ten with 36 or more edges cannot be MMIK as it has a proper IK minor, $K_7$.
There remain just under 5 million graphs to consider.

We next sieve out any graph that has one of the 26 known MMIK graphs of order ten (see subsection~\ref{sec:26known})
as a subgraph or any of the known MMIK graphs of order less than ten as a minor. 
We also discard any $2$-apex graph, which must be nIK by Lemma~\ref{lem:2apex}.
We test the remaining graphs using Ramin Naimi's implementation~\cite{NW} of the algorithm
of Miller and Naimi~\cite{MN}. Those which were found to be IK led us to the nine new MMIK graphs
listed above. Those graphs that we could not otherwise classify led us to a list of 35 graphs that we subsequently
showed to be maxnik: following~\cite{EFM}, we say that a graph $G$ is {\em maximal knotless} or {\em maxnik} 
if $G$ is nIK, but every $G+e$ is IK. In Appendix~\ref{sec:appmnik} we provide a classification 
of the 49 maxnik graphs of order ten. In addition to the 35 graphs just mentioned, there are 14 maximal $2$-apex graphs.

Once we determined the 35 MMIK obstructions and the 35 maxnik (and not $2$-apex) graphs of order ten, 
we were in a position to show that the remaining nearly 5 million candidates $G$ are not MMIK for 
at least one of the following three reasons:
1) $G$ is $2$-apex and nIK by Lemma~\ref{lem:2apex}, 
2) $G$ has a proper minor that is IK and is therefore 
not MMIK, or 3) $G$ is a subgraph of one of the 35 maxnik graphs and is therefore nIK.

Having determined the 49 maxnik graphs of order ten (see Appendix~\ref{sec:appmnik}), we can update 
some observations of \cite{EFM}, which catalogs the maxnik graphs through order nine. The fourteen maximally $2$-apex
graphs are of the form $K_2 \ast T_{8}$, the join of $K_2$ and a planar triangulation on eight vertices. As 
such, each has $35$ edges and $\Delta(G) = 9$. The remaining 35 maxnik graphs range between 
size 23 and 34. In particular, we still know of no maxnik graph on 22 edges. 
To continue Tables 1 and 2 of \cite{EFM}, the least $|E|/|V|$ among maxnik graphs of 
order ten is $23/10$ and we have $2 \leq \delta(G) \leq 6$ and $5 \leq \Delta(G) \leq 9$.
Although we still have no regular maxnik graph, there are several examples
where $\Delta(G) - \delta(G)$ is only one.

\section*{Acknowledgements}
We thank Ramin Naimi for use of his programs that were 
essential for this project. We thank the referees for a careful reading 
of an earlier version of this paper;
their feedback resulted in substantial improvements.

\appendix

\section{Proof by hand that $G_1$ and $G_2$ are IK}\label{sec:appG12}

In this section, we give a traditional proof (ie one that does not rely on computers) that $G_1$ and $G_2$ are IK. 
For this we use a lemma due, independently, to two groups~\cite{F,TY}. 
Let $\DD$ denote the multigraph of Figure~\ref{fig:D4} and, for $ i = 1,2,3,4$, let $C_i$ be the cycle
of edges $e_{2i-1}, e_{2i}$. For any given embedding of $\DD$, let $\sigma$
denote the mod 2 sum of the Arf invariants of the 16 Hamiltonian cycles in $\DD$
and $\mbox{lk}(C,D)$ the mod 2 linking number of cycles $C$ and $D$.
Since the Arf invariant of the unknot is zero, an embedding of $\DD$ with
$\sigma \neq 0$ must have a knotted cycle.

\begin{figure}[htb]
\centering
\includegraphics[scale=1]{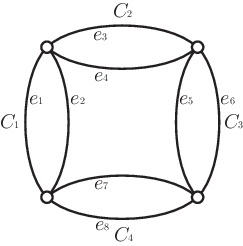}
\caption{The $\DD$ graph.} 
\label{fig:D4}
\end{figure}

\begin{lemma}\cite{F,TY}
\label{lem:D4}
Given an embedding of $\DD$, $\sigma \neq 0$ if and only if
$\mbox{lk}(C_1,C_3) \neq 0$ and $\mbox{lk}(C_2,C_4) \neq 0$.
\end{lemma}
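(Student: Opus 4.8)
The plan is to establish the stronger mod $2$ identity
$$\sigma \equiv \mbox{lk}(C_1,C_3)\cdot\mbox{lk}(C_2,C_4) \pmod 2,$$
from which the lemma is immediate: in $\mathbb{Z}/2$ a product is nonzero precisely when both factors are, so $\sigma \neq 0$ if and only if $\mbox{lk}(C_1,C_3) \neq 0$ and $\mbox{lk}(C_2,C_4) \neq 0$. Throughout, linking numbers are taken mod $2$.

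The main tool is the mod $2$ crossing-change formula for the Arf invariant: if knots $K_+$ and $K_-$ differ by a single crossing change, then $\mathrm{Arf}(K_+) + \mathrm{Arf}(K_-) \equiv \mbox{lk}(L_0) \pmod 2$, where $L_0$ is the two-component link obtained by the oriented (Seifert) smoothing at the changed crossing. Both $\sigma$ and the product $P = \mbox{lk}(C_1,C_3)\cdot\mbox{lk}(C_2,C_4)$ are $\mathbb{Z}/2$-valued ambient isotopy invariants of the spatial embedding of $\DD$, and both equal $0$ for the standard planar embedding, in which every Hamiltonian cycle is unknotted and every linking number vanishes. Since any embedding can be reached from the standard one by an ambient isotopy followed by finitely many crossing changes, it suffices to prove that a single crossing change alters $\sigma$ and $P$ by the same amount mod $2$; then $\sigma + P$ is a crossing-change and isotopy invariant, equal to its value $0$ on the standard embedding.

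Recall that $\DD$ has four vertices, with $C_i$ the pair of parallel edges (the double edge) joining consecutive vertices; a Hamiltonian cycle chooses one edge from each $C_i$, which is why there are $2^4 = 16$ of them. A crossing change affects only those Hamiltonian cycles containing both of its strands, and changes each such cycle's Arf invariant by the linking number of its oriented resolution; so I would compute $\Delta\sigma$ as the mod $2$ sum of these resolution linking numbers, organized by which edges carry the two strands. When the two strands lie on a single edge (a self-crossing), or on the two edges of one $C_i$, the product $P$ is unchanged. In these cases one of the two resolution loops is the same for every choice of the remaining edges, so by bilinearity of the linking pairing each contribution is counted an even number of times and $\Delta\sigma \equiv 0$ as well. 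The identical even-counting argument disposes of a crossing between two adjacent double edges $C_i$ and $C_{i+1}$: these share a vertex, the resolution produces a small fixed loop there, $P$ is again unchanged, and $\Delta\sigma \equiv 0$.

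The crux, and where I expect the real work, is a crossing between the two disjoint double edges $C_1$ and $C_3$ (and, symmetrically, between $C_2$ and $C_4$). Such a change flips $\mbox{lk}(C_1,C_3)$ while fixing $\mbox{lk}(C_2,C_4)$, so $\Delta P \equiv \mbox{lk}(C_2,C_4)$, and the goal is to show $\Delta\sigma$ equals the same value. Exactly four Hamiltonian cycles contain both strands, one for each choice of edge in $C_2$ and in $C_4$. Following the cyclic order of the vertices, the oriented smoothing splits each such cycle into two loops, one incorporating the chosen $C_2$-edge and the other the chosen $C_4$-edge, so now both loops vary with the free choices. Summing the four resolution linking numbers and expanding by bilinearity, every term that does not pair a $C_2$-edge against a $C_4$-edge is counted an even number of times and vanishes mod $2$, while the cross terms assemble into $\mbox{lk}(e_3 \cup e_4,\, e_7 \cup e_8) = \mbox{lk}(C_2,C_4)$. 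Hence $\Delta\sigma \equiv \mbox{lk}(C_2,C_4) \equiv \Delta P$, completing the induction and hence the identity. The obstacle is purely the bookkeeping of this case: correctly reading off the two resolution loops from the Seifert smoothing, and justifying the arc-wise bilinear decomposition of the linking numbers after perturbing the curves apart at their shared vertices.
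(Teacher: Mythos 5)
The paper does not actually prove Lemma~\ref{lem:D4}: it is imported wholesale from Foisy and Taniyama--Yasuhara \cite{F,TY}, so there is no internal argument to compare yours against. What you propose is, in substance, a reconstruction of the Taniyama--Yasuhara identity $\sigma \equiv \mbox{lk}(C_1,C_3)\cdot\mbox{lk}(C_2,C_4) \pmod 2$, and the skeleton is sound: both sides vanish on a planar embedding of $\DD$; any embedding is reached from it by isotopy and finitely many crossing changes; and the Arf crossing-change formula reduces the identity to checking $\Delta\sigma \equiv \Delta P$ in each case of where the two strands lie. Your handling of the crux case (strands on edges of $C_1$ and $C_3$) is correct, including the count of four affected Hamiltonian cycles and the conclusion $\Delta\sigma \equiv \mbox{lk}(C_2,C_4) \equiv \Delta P$. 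Two refinements would make it airtight. First, when the two strands lie on the two \emph{distinct} edges of a single $C_i$, your ``fixed resolution loop'' picture is not the right one: a Hamiltonian cycle in $\DD$ uses exactly one edge from each doubled pair, so no Hamiltonian cycle contains both strands, no Arf invariant changes, and $\Delta\sigma = 0$ trivially. Second, the arc-wise bilinear decomposition you flag as the remaining obstacle can be bypassed by pairing whole loops instead of arcs: for a fixed choice $f_4 \in \{e_7,e_8\}$, the two resolution loops $\ell_1(e_3)$ and $\ell_1(e_4)$ are cycles in the complement of $\ell_2(f_4)$ whose mod 2 chain sum is exactly $C_2$ (the arcs of $e_1$ and $e_5$ that they share cancel), so $\sum_{f_2} \mbox{lk}\left(\ell_1(f_2),\ell_2(f_4)\right) \equiv \mbox{lk}\left(C_2,\ell_2(f_4)\right)$; summing then over $f_4$ and using $\ell_2(e_7)+\ell_2(e_8) = C_4$ as mod 2 chains gives $\Delta\sigma \equiv \mbox{lk}(C_2,C_4)$ with no linking numbers of open arcs ever invoked. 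The same whole-loop cancellation disposes of the self-crossing and adjacent-$C_i$ cases, where the varying loops pair off in fours or twos and contribute evenly. With those two adjustments, your argument is a complete and correct proof of the cited lemma, and arguably more self-contained than what the paper offers, which is only a citation.
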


\subsection{$G_1$ is IK}\ 

In this subsection, we show that $G_1$ is IK. 
To use Lemma~\ref{lem:D4} we need pairs of linked cycles. 
We first describe three sets of pairs that we will call $A_i$'s, $B_i$'s, and $C_i$'s

\smallskip
\noindent%
{\bf Step I:} Define $A_i$ pairs.

To find pairs of linked cycles, we use
minors of $G_1$ that are members of the Petersen family of graphs
as these are the obstructions to linkless embedding~\cite{RST}. 
Our first example is
based on
contracting the edges $(3,6)$, $(5,7)$, and $(6,9)$
in Figure~\ref{fig:G1} 
to produce a $K_{4,4}$ minor with 
$\{3,4,5,10\}$ and $\{0,1,2,8\}$ as the two parts. For convenience, we
use the smallest vertex label to denote the new vertex obtained when contracting
edges. Thus, we denote by 3 the vertex obtained by identifying 3, 6, and 9 
of $G_1$. Further deleting the edge $(1,3)$ we identify the Petersen 
family graph $K_{4.4}^-$ as a minor of $G_1$. 

\begin{figure}[htb]
\centering
\includegraphics[scale=1.25]{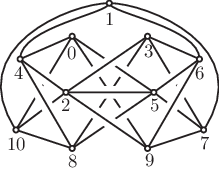}
\caption{The graph $G_1$.} 
\label{fig:G1}
\end{figure}

There are nine pairs of disjoint cycles in 
$K_{4,4}^-$ and we denote these pairs as $A_1$ through $A_9$. 
In Table~\ref{tab:An}, we first give the cycle pair in the $K_{4,4}^-$ 
and then the corresponding pair in $G_1$.

\begin{table}[htb]
    \centering
    \begin{tabular}{c|l|l}
         $A_1$ & 0,4,1,5 -- 2,3,8,10 & 0,4,1,7,5 -- 2,3,8,10 \\
         $A_2$ & 0,4,1,10 -- 2,3,8,5 & 0,4,1,10 -- 2,3,8,5 \\
         $A_3$ & 1,4,2,5 -- 0,3,8,10 & 1,4,2,5,7 -- 0,9,6,3,8,10 \\
         $A_4$ & 1,4,2,10 -- 0,3,8,5 & 1,4,2,10 -- 0,9,6,3,8,5 \\
         $A_5$ & 1,4,8,5 -- 0,3,2,10 & 1,4,8,5,7 -- 0,9,6,3,2,10 \\
         $A_6$ & 1,4,8,10 -- 0,3,2,5 & 1,4,8,10 -- 0,9,6,3,2,5 \\
         $A_7$ & 1,5,2,10 -- 0,3,8,4 & 1,7,5,2,10 -- 0,9,6,3,8,4 \\
         $A_8$ & 0,5,1,10 -- 2,3,8,4 & 0,5,7,1,10 -- 2,3,8,4 \\
         $A_9$ & 1,5,8,10 -- 0,3,2,4 & 1,7,5,8,10 -- 0,9,6,3,2,4 \\
    \end{tabular}
    \caption{Nine pairs of cycles in $G_1$ called $A_1, \ldots, A_9$.}
    \label{tab:An}
\end{table}

\smallskip
\noindent%
{\bf Step II:} Find 
pairs $B_i$ and $C_i$.

Similarly, we will describe a $K_{3,3,1}$ minor that gives pairs of cycles
$B_1$ through $B_9$. Contract edges 
$(1,7)$, $(3,7)$, and $(7,9)$. Delete vertex $6$ and edge $(2,9)$.
The result is a $K_{3,3,1}$ minor with parts $\{0,2,8\}$, $\{4,5,10\}$, and
$\{1\}$. In Table~\ref{tab:Bn} we give the nine pairs of cycles, first
in $K_{3,3,1}$ and then in $G_1$.

\begin{table}[htb]
    \centering
    \begin{tabular}{c|l|l}
         $B_1$ & 0,1,4 -- 2,5,8,10 & 0,4,1,7,9 -- 2,5,8,10 \\
         $B_2$ & 0,1,5 -- 2,4,8,10 & 0,5,7,9 -- 2,4,8,10 \\
         $B_3$ & 0,1,10 -- 2,4,8,5 & 0,9,7,1,10 -- 2,4,8,5 \\
         $B_4$ & 1,2,4 -- 0,5,8,10 & 1,4,2,3,7 -- 0,5,8,10 \\
         $B_5$ & 1,2,5 -- 0,4,8,10 & 2,3,7,5 -- 0,4,8,10 \\
         $B_6$ & 1,2,10 -- 0,4,8,5 & 2,3,7,1,10 -- 0,4,8,5 \\
         $B_7$ & 1,4,8 -- 0,5,2,10 & 1,4,8,3,7 -- 0,5,2,10 \\
         $B_8$ & 1,5,8 -- 0,4,2,10 & 3,7,5,8 -- 0,4,2,10 \\
         $B_9$ & 1,8,10 -- 0,4,2,5 & 1,7,3,8,10 -- 0,4,2,5 \\
    \end{tabular}
    \caption{Nine pairs of cycles in $G_1$ called $B_1, \ldots, B_9$.}
    \label{tab:Bn}
\end{table}

Another $K_{4,4}^-$ minor of $G_1$ will give our last set of nine cycle pairs.
Contract edges $(0,9)$, $(2,5)$, and $(3,8)$ to obtain a $K_{4,4}$
with parts $\{0,1,2,8\}$ and $\{4,6,7,10\}$. Then delete
edge $(1,4)$ to make a $K_{4.4}^-$ minor. Table~\ref{tab:Cn} lists
the nine pairs of cycles, first in the $K_{4,4}^-$ minor and
then in $G_1$.

\begin{table}[htb]
    \centering
    \begin{tabular}{c|l|l}
         $C_1$ & 0,6,1,7 -- 2,4,8,10 & 9,6,1,7 -- 2,4,8,10 \\
         $C_2$ & 0,6,1,10 -- 2,4,8,7 & 0,9,6,1,10 -- 2,4,8,3,7,5 \\
         $C_3$ & 1,6,2,7 -- 0,4,8,10 & 1,6,5,7 -- 0,4,8,10 \\
         $C_4$ & 1,6,2,10 -- 0,4,8,7 & 1,6,5,2,10 -- 0,4,8,3,7,9 \\
         $C_5$ & 1,6,8,7 -- 0,4,2,10 & 1,6,,3,7 -- 0,4,2,10 \\
         $C_6$ & 1,6,8,10 -- 0,4,2,7 & 1,6,3,8,10 -- 0,4,2,5,7,9 \\
         $C_7$ & 0,7,1,10 -- 2,4,8,6 & 0,9,7,1,10 -- 2,4,8,3,6,5 \\
         $C_8$ & 1,7,2,10 -- 0,4,8,6 & 1,7,5,2,10 -- 0,4,8,3,6,9 \\
         $C_9$ & 1,7,8,10 -- 0,4,2,6 & 1,7,3,8,10 -- 0,4,2,5,6,9 \\
    \end{tabular}
    \caption{Nine pairs of cycles in $G_1$ called $C_1, \ldots, C_9$.}
    \label{tab:Cn}
\end{table}

As shown by Sachs~\cite{S}, in any embedding of $K_{4,4}^-$ 
or $K_{3,3,1}$, at least one
pair of the nine disjoint cycles in each graph has odd linking number.
We will simply say the cycles are {\em linked} if the linking number is odd.
Fix an embedding of $G_1$. Our goal is to show that the embedding must
have a knotted cycle.

We will argue by contradiction. For a contradiction, assume that there 
is no knotted cycle in the embedding of $G_1$. We leverage Lemma~\ref{lem:D4}
to deduce that certain pairs of cycles are not linked. Eventually, 
we will conclude that none of $B_1, \ldots, B_9$ are linked. 
This is a contradiction as these correspond to cycles in a $K_{3,3,1}$
and we know that every embedding of this Petersen family graph
must have a pair of linked cycles~\cite{S}. 
The contradiction shows that there must in fact be a knotted 
cycle in the embedding of $G_1$. As the embedding is arbitrary, this
shows that $G_1$ is IK.

\smallskip
\noindent%
{\bf Step III:} Eliminate
$A_2$ by combining with each $B_i$.

We illustrate our strategy by first focusing on the pair 
$A_2 = $ 0,4,1,10  -- 2,3,8,5.
Combine $A_2$ with each $B_i$. In each case we form a $\DD$ graph
as in Figure~\ref{fig:D4}. Since the $B_i$ are pairs of cycles in $K_{3,3,1}$,
a Petersen family graph, at least one pair is linked~\cite{S}.
If $A_2$ is 
also linked, then Lemma~\ref{lem:D4} implies that the embedding of
$G_1$ has a knotted cycle, in contradiction to our assumption.
Therefore, we conclude that $A_2$ is not linked (i.e., does not 
have odd linking number).

\begin{table}[htb]
    \centering
    \begin{tabular}{c|l}
         $B_1$ & $\{0,1,4\}$, $\{2,5,8\}$, $\{3,7\}$, $\{10\}$ \\
         $B_2$ & $\{0\}$, $\{1,4,10\}$, $\{2,3,8\}$, $\{5\}$ \\
         $B_3$ & $\{0,1,10\}$, $\{2,5,8\}$, $\{3,7\}$, $\{4\}$ \\
         $B_4$ & $\{0,10\}$, $\{1,4\}$, $\{2,3\}$, $\{5,8\}$ \\
         $B_5$ & $\{0,4,10\}$, $\{1\}$, $\{2,3,5\}$, $\{8\}$ \\ 
         $B_6$ & $\{0,4\}$, $\{1,10\}$, $\{2,3\}$, $\{5,8\}$ \\
         $B_7$ & $\{0,10\}$, $\{1,4\}$, $\{2,5\}$, $\{3,8\}$ \\
         $B_8$ & $\{0,4,10\}$, $\{1,7\}$, $\{2\}$, $\{3,8,5\}$ \\
         $B_9$ & $\{0,4\}$, $\{1,10\}$, $\{2,5\}$, $\{3,8\}$ \\
    \end{tabular}
    \caption{Pairing $A_2$ with $B_1, \ldots, B_9$.}
    \label{tab:A2}
\end{table}

In Table~\ref{tab:A2}, we list the vertices in $G_1$ that are identified to give each of the four vertices of $\DD$. 
Let us examine the pairing with $B_2$ as an example 
to see how this results in a $\DD$.
We identify $\{1,4,10\}$ as a single vertex by contracting edges $(1,4)$ and
$(1,10)$. Similarly contract $(2,3)$ and $(3,8)$ to make a vertex of
the $\DD$ from vertices $\{2,3,8\}$ of $G_1$. In this way, the cycle 0,4,1,10
of $A_2$ in $G_1$ becomes cycle $C_1$ of the $\DD$ 
(see Figure~\ref{fig:D4})
between $\{1,4,10\}$ and $\{0\}$ and
the cycle 2,3,8,5 becomes cycle $C_3$ between $\{2,3,8\}$ and $\{5\}$.
Similarly 0,5,7,9 of $B_2$ becomes homeomorphic to the cycle $C_2$
between $\{0\}$ and $\{5\}$. For the final cycle of $B_2$, 2,4,8,10, 
we observe that, in homology, 
$\mbox{2,4,8,10 } = \mbox{ 1,4,2,10 } \cup  \mbox{ 1,4,8,10}$.
Assuming $\mbox{lk}((\mbox{0,5,7,9}),(\mbox{2,4,8,10})) \neq 0$,
then one of 
$\mbox{lk}((\mbox{0,5,7,9}),(\mbox{1,4,2,10}))$ and
$\mbox{lk}((\mbox{0,5,7,9}),(\mbox{1,4,8,10}))$ is 
also nonzero. Whichever it is, 1,4,2,10 or 1,4,8,10, that will be our $C_4$ cycle in the $\DD$ of Figure~\ref{fig:D4}.

To summarize, we have argued that $A_2$ forms a $\DD$ 
with each pair $B_1, \ldots, B_9$. Since at least one of the $B_i$'s is
linked, then, assuming $A_2$ is linked, these two pairs make a 
$\DD$ that has a knotted cycle. Therefore,
by way of contradiction, going forward, we may assume $A_2$ is
not linked.

\smallskip
\noindent%
{\bf Step IV:} 
Argue $A_6$ is not linked.

We next argue that $A_6$ is not linked. 
For a contradiction, assume instead that $A_6$ is linked. 
Pairing with the $B_i$'s again, the vertices for each $\DD$ are 
\begin{align*}
    B_1  & \{0,9\}, \{1,4\}, \{2,5\}, \{8,9\} & 
    B_2  & \{0,5,9\}, \{1,7\}, \{2\}, \{4,8,10 \} \\
    B_3  & \{0,9\}, \{1,10\}, \{2,5\}, \{4,8\} & 
    B_4  & \{0,5\}, \{1,4\}, \{2,3\}, \{8,10 \} \\
    B_5  & \{0\}, \{1,7\}, \{2,3,5\}, \{4,8,10\} & 
    B_6  & \{0,5\}, \{1,10\}, \{2,3\}, \{4,8 \} \\
    B_7  & \{0,2,5\}, \{1,4,8\}, \{3\}, \{10\} & 
    B_9  & \{0,2,5\}, \{1,8,10\}, \{3\}, \{4 \}.\\
\end{align*}
For $B_8 = $ 3,7,5,8 -- 0,4,2,10, we first split one of the $A_6$
cycles: 
$\mbox{0,5,2,3,6,9 }  = \mbox{ 0,5,2,9 } \cup  \mbox{ 2,3,6,9}$.
One of the two summands must link with the other $A_2$ cycle
$1,4,8,10$. If 
$\mbox{lk}((\mbox{0,5,2,9}),(\mbox{1,4,8,10})) \neq 0$, 
then, by a symmetry of $G_1$, 
$\mbox{lk}((\mbox{8,5,2,3}),(\mbox{1,4,0,10})) \neq 0$. But this last is 
the pair $A_2$, and we have already argued that $A_2$ is not linked. 

Therefore, it must be that 
$\mbox{lk}((\mbox{2,3,6,9}),(\mbox{1,4,8,10})) \neq 0$.
Next, we split a cycle of $B_8$: 
$\mbox{0,4,2,10 }  = \mbox{ 1,4,2,10 } \cup  \mbox{ 0,4,1,10}$.
If $\mbox{lk}((\mbox{3,7,5,8}),(\mbox{1,4,2,10})) \neq 0$, 
form a $\DD$ with vertices $\{1,4,10\}$, $\{2\}$, $\{3\}$, and $\{8\}$.
On the other hand, 
If $\mbox{lk}((\mbox{3,7,5,8}),(\mbox{0,4,1,10})) \neq 0$, 
form a $\DD$ with vertices $\{0,9\}$, $\{1,4,10\}$, $\{3\}$,
and $\{8\}$.

For every choice of $B_i$ we can make a $\DD$ with $A_6$. We know
that at least one $B_i$ is a linked pair. If $A_6$ is also linked,
then, by Lemma~\ref{lem:D4}, this embedding of $G_1$ has a knotted 
cycle. Therefore, by way of contradiction, we may assume the 
pair of $A_6$ is not linked.

\smallskip
\noindent%
{\bf Step V:} Eliminate $B_2$ and $B_8$.

We next eliminate $B_2$ by pairing it with each $A_i$. As we are assuming
$A_2$ and $A_6$ are not linked, it must be some other $A_i$ pair that is 
linked. Here are the vertices of the $\DD$'s in each case:
\begin{align*}
    A_1  & \{0,5,7\}, \{2,8,10\}, \{3,6,9\}, \{4\} & 
    A_3  & \{0,9\}, \{2,4\}, \{5,7\}, \{8,10 \} \\
    A_4  & \{0,5,9\}, \{1,7\}, \{2,4,10\}, \{8\} & 
    A_5  & \{0,9\}, \{2,10\}, \{4,8\}, \{5,7 \} \\
    A_7  & \{0,9\}, \{2,10\}, \{4,8\}, \{5,7\} & 
    A_8  & \{0,5,7\}, \{2,4,8\}, \{3,6,9\}, \{10\} \\
    A_9  & \{0,9\}, \{2,4\}, \{5,7\}, \{8,10\}. \\
\end{align*}
This shows $B_2$ is not linked. Since $B_8$ is 
the same as $B_2$ by a symmetry of $G_1$, $B_8$
is likewise not linked. Ultimately, we will show 
that no $B_i$ is linked. So far, we have this for $B_2$ and $B_8$.

\smallskip
\noindent%
{\bf Step VI:} Eliminate $C_1$, $C_5$, and $C_3$.

Our next step is to argue $C_1$ is not linked by pairing it 
with the remaining $A_i$'s:
\begin{align*}
    A_1  & \{1,7\}, \{2,8,10\}, \{3,6\}, \{4\} & 
    A_3  & \{1,7\}, \{2,4\}, \{6,9\}, \{8,10 \} \\
    A_4  & \{1\}, \{2,4,10\}, \{6,9\}, \{8\} & 
    A_5  & \{1,7\}, \{2,10\}, \{4,8\}, \{6,9 \} \\
    A_7  & \{0,4,8\}, \{1,5,7\}, \{6\}, \{10\} & 
    A_8  & \{1,7\}, \{2,4,8\}, \{3,6\}, \{10\} \\
    A_9  & \{1,7\}, \{2,4\}, \{6,9\}, \{8,10\}. \\
\end{align*}
By a symmetry of $G_1$, $C_5$ is also not linked.

Now we argue that $C_3$ is not linked, again by pairing
with $A_i$'s:
\begin{align*}
    A_1  & \{0,4\}, \{1,5,7\}, \{3,6\}, \{8,10\} & 
    A_3  & \{0,8,10\}, \{1,5,7\}, \{4\}, \{6 \} \\
    A_5  & \{0,10\}, \{1,5,7\}, \{4,8\}, \{6\} &
    A_7  & \{0,4,8\}, \{1,5,7\}, \{6\}, \{10\} \\ 
    A_8  & \{0,10\}, \{1,5,7\}, \{4,8\}, \{6\} &
    A_9  & \{0,4\}, \{1,5,7\}, \{6\}, \{8,10\}. \\
\end{align*}
For $A_4$ we split the second cycle:
$\mbox{0,9,6,3,8,5 }  = \mbox{ 0,9,6,5 } \cup  \mbox{ 6,3,8,5}$. 
Suppose that it is 0,9,6,5 that is linked with 1,4,2,10. In this case
we also split that cycle:
$\mbox{1,4,2,10 }  = \mbox{ 1,4,8,10 } \cup  \mbox{ 2,4,8,10}$. 
To get a $\DD$ when pairing 0,9,6,5 -- 1,4,8,10 with $C_3$ we use
vertices $\{0\}$, $\{1\}$, $\{4,8,10\}$, and $\{5,6\}$ and
for 0,9,6,5 -- 2,4,8,10 with $C_3$, $\{0\}$, $\{2,3,7\}$, $\{4,8,10\}$,
and $\{5,6\}$. 

On the other hand, if it is 6,3,8,5 that is linked with
1,4,2,10, we write
$\mbox{1,4,2,10 }  = \mbox{ 0,4,1,10 } \cup  \mbox{ 0,4,2,10}$.
Then when $C_3$ is paired with 6,3,8,5 --- 0,4,1,10, we have 
a $\DD$ using vertices $\{0,4,10\}$, $\{1\}$, $\{5,6\}$, and $\{8\}$
while if $C_3$ is paired with 6,3,8,5 -- 0,4,2,10 the vertices
are $\{0,4,10\}$, $\{2,7,9\}$, $\{5,6\}$, and $\{8\}$.
This completes the argument that $C_3$ is not
linked.

\smallskip
\noindent%
{\bf Step VII:} Eliminate $A_8$ and $A_1$.

We will show that $A_8$ is not linked by pairing with the 
remaining $C_i$'s. For $C_8$, the vertices would be $\{0\}$, $\{1,5,7,10\}$,
$\{2\}$, and $\{3,4,8\}$. The remaining cases involve splitting cycles.

For $C_2$ we write
$\mbox{2,4,8,3,7,5 }  = \mbox{ 3,7,5,8 } \cup  \mbox{ 2,4,8,5}$.
If it is 3,7,5,8 -- 0,9,6,1,10 that is linked, we use vertices
$\{0,1,10\}$, $\{2,9\}$, $\{3,8\}$, and $\{5,7\}$ and
if, instead, 2,4,8,5 -- 0,9,6,10 is linked, we have
$\{0,1,10\}$, $\{2,4,8\}$, $\{3,6\}$, and $\{5\}$.

For $C_4$, it is a cycle of $A_8$ that we rewrite:
$\mbox{0,5,7,1,10 }  = \mbox{ 0,9,7,1,10 } \cup  \mbox{ 0,9,7,5}$.
In either case, we use the same vertices: $\{0,7,9\}$, $\{1,5,6,10\}$,
$\{2\}$, and $\{3,4,8\}$.

For $C_6$, 
$\mbox{0,4,2,5,7,9 }  = \mbox{ 0,4,2,9 } \cup  \mbox{ 2,5,7,9}$.
When 0,4,2,9 is linked with 1,6,3,8,10 the vertices are
$\{0,9\}$, $\{1,10\}$, $\{2,4\}$, and $\{3,8\}$
while if 2,5,7,9 links 1,6,3,8,10, we use
$\{1,10\}$, $\{2\}$, $\{3,8\}$, and $\{5,7\}$.

Continuing with $C_7$, 
$\mbox{2,4,8,3,6,5 }  = \mbox{ 3,6,5,8 } \cup  \mbox{ 2,4,8,5}$.
The vertices for 3,6,5,8 -- 0,9,7,1,0 are $\{0,1,7,10\}$,
$\{2,9\}$, $\{3,8\}$, and $\{5\}$
and for 2,4,8,5 -- 0,9,7,1,10, use 
$\{0,1,7,10\}$, $\{2,4,8\}$, $\{3,6,9\}$, and $\{5\}$.

Finally, in the case of $C_9$, write 
$\mbox{0,4,2,5,6,9 }  = \mbox{ 0,4,2,9 } \cup  \mbox{ 2,5,6,9}$.
When 0,4,2,9 -- 1,7,3,8,10 is linked, the vertices are
$\{0\}$, $\{1,7,10\}$, $\{2,4\}$, and $\{3,8\}$
and for 2,5,6,9 -- 1,7,3,8,10 use $\{1,7,10\}$,
$\{2\}$, $\{3,8\}$, and $\{5\}$.
This completes the argument for $A_8$. By a symmetry of $G_1$,
$A_1$ is also not linked. 
In other words, going forward, we will assume it is one of 
$A_3$, $A_4$, $A_5$, $A_7$, or $A_9$ that is linked.

\smallskip
\noindent%
{\bf Step VIII:} Eliminate $B_1$, $B_3$, and $B_5$.

Next, we will argue that $B_1$ is not linked by comparing 
with the remaining $A_i$'s:
\begin{align*}
    A_3  & \{0,9\}, \{1,4,7\}, \{2,5\}, \{8,10\} & 
    A_4  & \{0,9\}, \{1,4\}, \{2,10\}, \{5,8\} \\
    A_5  & \{0,9\}, \{1,4,7\}, \{2,10\}, \{5,8\} &
    A_7  & \{0,4,9\}, \{1,7\}, \{2,5,10\}, \{8\} \\ 
    A_9  & \{0,4,9\}, \{1,7\}, \{2\}, \{5,8,10\}. \\
\end{align*}
By a symmetry of $G_1$ we also assume $B_3$ is not linked.

Now, by pairing with the remaining $A_i$'s, we show $B_5$ is not
linked:
\begin{align*}
    A_3  & \{0,8,10\}, \{2,5,7\}, \{3\}, \{4\} & 
    A_5  & \{0,10\}, \{2,3\}, \{4,8\}, \{5,7\} \\
    A_7  & \{0,4,8\}, \{2,5,7\}, \{3\}, \{10\} &
    A_9  & \{0,4\}, \{2,3\}, \{5,7\}, \{8,10\}. \\ 
\end{align*}
For $A_4$ we employ several splits. First,
$\mbox{0,9,6,3,8,5 }  = \mbox{ 0,9,6,5 } \cup  \mbox{ 6,3,8,5}$.
In the case that 0,9,6,5 -- 1,4,2,10 is linked, write
$\mbox{1,4,2,10 }  = \mbox{ 2,4,8,10 } \cup  \mbox{ 1,4,8,10}$.
Pairing 0,9,6,5 -- 2,4,8,10 with $B_5$, the vertices are
$\{0\}$, $\{2\}$, $\{4,8,10\}$, and $\{5\}$.
If instead it is 0,9,6,5 -- 1,4,8,10 that is linked, we use
$\{0\}$, $\{1,7\}$, $\{4,8,10\}$, and $\{5\}$.

So we assume that 6,3,8,5 -- 1,4,2,10 is linked and
rewrite a $B_5$ cycle:
$\mbox{2,3,7,5 }  = \mbox{ 2,3,6,5 } \cup  \mbox{ 3,6,5,7}$.
In case 2,3,6,5 -- 0,4,8,10 is linked, we make a further 
split:
$\mbox{1,4,2,10 }  = \mbox{ 1,7,9,2,10 } \cup  \mbox{ 1,4,9,2,10}$.
Thus, assuming 2,3,6,5 -- 0,4,8,10 and 1,7,9,2,10 -- 6,3,8,5 are
both linked, we have a $\DD$ with vertices 
$\{2\}$, $\{3,5,6\}$, $\{8\}$, and $\{10\}$. 
If instead it is 2,3,6,5 -- 0,4,8,10 and 1,4,9,2,10 -- 6,3,8,5 that 
are linked, use $\{2\}$, $\{3,5,6\}$, $\{4\}$, and $\{8\}$.
This leaves the case where 3,6,5,7 -- 0,4,8,10 is linked.
We must split a final time:
$\mbox{1,4,2,10 }  = \mbox{ 0,4,1,10 } \cup  \mbox{ 0,4,2,10}$.
If 3,6,5,7 -- 0,4,8,10 and 0,4,1,10 -- 6,3,8,5 are both linked,
the vertices are $\{0,4,10\}$, $\{1,7\}$, $\{3,5,6\}$, and $\{8\}$.
On the other hand, when 3,6,5,7 -- 0,4,8,10 and 0,4,2,10 -- 6,3,8,5
are linked, use $\{0,4,10\}$, $\{2,7,9\}$, $\{3,5,6\}$, and $\{8\}$.
This completes the argument for $B_5$, which we have shown is 
not linked.

\smallskip
\noindent%
{\bf Step IX:} Eliminate $B_9$. Only $B_4$ and $B_6$ remain.

Next we turn to $B_9$ which we compare with the remaining $A_i$'s:
\begin{align*}
    A_3  & \{0\}, \{1,7\}, \{2,4,5\}, \{3,8,10\} & 
    A_4  & \{0,5\}, \{1,10\}, \{2,4\}, \{3,8\} \\
    A_7  & \{0,4\}, \{1,7,10\}, \{2,5\}, \{3,8\} &
    A_9  & \{0,2,4\}, \{1,7,8,10\}, \{3\}, \{5\}. \\ 
\end{align*}
This leaves $A_5$ for which we rewrite:
$\mbox{0,9,6,3,2,10 }  = \mbox{ 0,9,2,10 } \cup  \mbox{ 2,3,6,9}$.
If 0,9,2,10 -- 1,4,8,5,7 is linked, then observe that, by a symmetry of
$G_1$, this is the same as $A_8$, which is not linked. Thus, we can assume
2,3,6,9 -- 1,4,8,5,7 is linked and rewrite:
$\mbox{1,4,8,5,7 }  = \mbox{ 1,4,0,5,7 } \cup  \mbox{ 0,4,8,5}$.
Pairing 2,3,6,9 - 1,4,0,5,7 with $B_9$ gives a $\DD$ on vertices
$\{0,4,5\}$, $\{1,7\}$, $\{2\}$, and $\{3\}$.
If it is 2,3,6,9 -- 0,4,8,5 that is linked, then, pairing with $B_9$,
use $\{0,4,5\}$, $\{2\}$, $\{3\}$, and $\{8\}$. This
completes the argument for $B_9$ and, by a symmetry of $G_1$, also 
for $B_7$.

Recall that our goal is to argue, for a contradiction, that no
$B_i$ is linked. At this stage we are left only with $B_4$ and $B_6$
as pairs that could be linked.

\smallskip
\noindent%
{\bf Step X:} Eliminate $A_4$, $A_5$, and $A_9$ leaving only $A_3$ and $A_7$.

Before providing the argument for the remaining
two $B_i$'s, we first eliminate a few more $A_i$'s, starting with 
$A_4$, which we compare with the two remaining $B_i$'s:
\begin{align*}
    B_4  & \{0,5,8\}, \{1,2,4\}, \{3\}, \{10\} &
    B_6  & \{0,5,8\}, \{1,2,10\}, \{3\}, \{4\}. \\ 
\end{align*}
Next $A_5$, again by pairing with $B_4$ and $B_6$:
\begin{align*}
    B_4  & \{0,10\}, \{1,4,7\}, \{2,3\}, \{5,8\} &
    B_6  & \{0\}, \{1,7\}, \{2,3,10\}, \{4,8,5\}. \\ 
\end{align*}
Since $A_9$ agrees with $A_5$ under a symmetry of $G_1$,
this leaves only $A_3$ and $A_7$ as pairs that may yet be linked 
among the $A_i$'s.

\smallskip
\noindent%
{\bf Step XI:} Eliminate $C_6$ and $C_9$, leaving $C_2$, $C_4$, $C_7$, and $C_8$.

As a penultimate step, we show that $C_6$ is not linked
by comparing with these two remaining $A_i$'s:
\begin{align*}
    A_3  & \{0,9\}, \{1\}, \{2,4,5,7\}, \{3,6,8,10\} &
    A_7  & \{0,4,9\}, \{1,10\}, \{2,5,7\}, \{3,6,8\}. \\ 
\end{align*}
By a symmetry of $G_1$, we can also assume that $C_9$ is not linked.
This leaves only four $C_i$'s that may be linked: $C_2$, $C_4$, $C_7$,
and $C_8$.

\smallskip
\noindent%
{\bf Step XII:} 
Eliminate the remaining two $B_i$'s ($B_6$ and $B_4$) to complete the argument.

Finally, compare $B_6$ with the remaining $C_i$'s:
\begin{align*}
    C_4  & \{0,4,8\}, \{1,2,10\}, \{3,7\}, \{5\} &
    C_8  & \{0,4,8\}, \{1,2,7,10\}, \{3\}, \{5\}. \\ 
\end{align*}
For $C_2$ we rewrite:
$\mbox{2,4,8,3,7,5 }  = \mbox{ 2,3,8,4 } \cup  \mbox{ 2,3,7,5}$.
If 2,3,8,4 -- 0,9,6,1,10 is linked, then pairing with $B_6$, we 
have vertices $\{0\}$, $\{1,10\}$, $\{2,3\}$, and $\{4,8\}$.
On the other hand pairing $B_6$ with 2,3,7,5 -- 0,9,6,1,10 we will
have a $\DD$ using the vertices $\{0\}$, $\{1,10\}$, $\{2,3,7\}$, and
$\{5\}$.

For $C_7$:
$\mbox{2,4,8,3,6,5 }  = \mbox{ 2,3,8,4 } \cup  \mbox{ 2,3,6,5}$.
Then 2,3,8,4 -- 0,9,7,1,10 with $B_6$ gives a $\DD$ for
vertices $\{0\}$, $\{1,7,10\}$, $\{2,3\}$, and $\{4,8\}$.
On the other hand, 2,3,6,5 -- 0,9,7,1,10 pairs with $B_6$
using vertices $\{0\}$, $\{1,7,10\}$, $\{2,3\}$, and $\{5\}$.

This completes the argument for $B_6$.
By a symmetry of $G_1$ we see that $B_4$ is also not linked.

In this way, the assumption that there is no knotted cycle in
the given embedding of $G_1$ forces us to conclude that
no pair $B_1, \ldots, B_9$ is linked. However, these 
correspond to the cycles of a $K_{3,3,1}$. As Sachs~\cite{S} 
has shown, any embedding of $K_{3,3,1}$ 
must have a pair of cycles with odd linking number. 
The contradiction shows that there can
be no such knotless embedding and $G_1$ is IK.
This completes the proof that $G_1$ is MMIK.

\subsection{$G_2$ is MMIK}\ 

In this subsection, we show that $G_2$ is IK.
The argument is similar to that for $G_1$ above. 
To use Lemma~\ref{lem:D4}, we need pairs of linked cycles in $G_2$.

\smallskip
\noindent%
{\bf Step I:} 
Define pairs $A_i$, $B_i$, $C_i$, and $D_i$.

We begin by identifying four ways that the Petersen family graph $P_9$, of order nine, 
appears as a minor of graph $G_2$. 
Using the vertex labelling of Figure~\ref{fig:G2}, on contracting edge $(0,4)$ and deleting vertex 8, 
the resulting graph has $P_9$ as a subgraph. 

\begin{figure}[htb]
\centering
\includegraphics[scale=1.25]{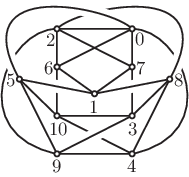}
\caption{The graph $G_2$.} 
\label{fig:G2}
\end{figure}

There are seven pairs of disjoint cycles in $P_9$.
we denote these pairs as $A_1$ through $A_7$.
In Table~\ref{tab:G2An}, we give the pairs in $G_2$.

\begin{table}[htb]
    \centering
    \begin{tabular}{c|l}
         $A_1$ & 0,4,10,5 -- 1,6,2,9,3,7 \\
         $A_2$ & 0,2,6,10,4 -- 1,5,9,3,7 \\
         $A_3$ & 0,2,9,5 -- 1,6,10,3,7 \\
         $A_4$ & 0,5,1,7 -- 2,6,10,3,9 \\
         $A_5$ & 0,4,10,3,7 -- 1,5,9,2,6 \\
         $A_6$ & 1,5,10,6 -- 0,2,9,3,7 \\
         $A_7$ & 3,9,5,10 -- 0,2,6,1,7 \\
    \end{tabular}
    \caption{Seven pairs of cycles in $G_2$ called $A_1, \ldots, A_7$.}
    \label{tab:G2An}
\end{table}

Similarly, if we contract edge $(3,10)$ and delete vertex 7 in $G_2$,
the resulting graph has a $P_9$ subgraph. We will call these seven cycles
$B_1, \ldots B_7$ as in Table~\ref{tab:G2Bn}

\begin{table}[htb]
    \centering
    \begin{tabular}{c|l}
         $B_1$ & 0,2,6 -- 1,5,9,3,10,4,8 \\
         $B_2$ & 0,2,8,4 -- 1,5,9,3,10,6 \\
         $B_3$ & 0,2,9,5 -- 1,6,10,4,8 \\
         $B_4$ & 0,4,10,6 -- 1,5,9,2,8 \\
         $B_5$ & 0,5,1,6 -- 2,8,4,10,3,9 \\
         $B_6$ & 1,6,2,8 -- 0,4,10,3,9,5 \\
         $B_7$ & 2,6,10,3,9 -- 0,4,8,1,5 \\
    \end{tabular}
    \caption{Seven pairs of cycles in $G_2$ called $B_1, \ldots, B_7$.}
    \label{tab:G2Bn}
\end{table}

Contracting edge $(4,10)$ and deleting vertex 6, we have the 
seven cycles of Table~\ref{tab:G2Cn}.

\begin{table}[htb]
    \centering
    \begin{tabular}{c|l}
         $C_1$ & 3,8,4,10 -- 0,2,9,5,1,7 \\
         $C_2$ & 3,8,1,7 -- 0,2,9,5,10,4 \\
         $C_3$ & 2,8,3,9 -- 0,4,10,5,1,7 \\
         $C_4$ & 0,4,10,3,7 -- 1,5,9,2,8 \\
         $C_5$ & 3,9,5,10 -- 0,2,8,1,7 \\
         $C_6$ & 1,5,10,4,8 -- 0,2,9,3,7 \\
         $C_7$ & 0,2,8,4 -- 1,5,9,3,7 \\
    \end{tabular}
    \caption{Seven pairs of cycles in $G_2$ called $C_1, \ldots, C_7$.}
    \label{tab:G2Cn}
\end{table}

Finally, if we contract edge $(3,9)$ and delete vertex 7 in $G_2$,
the resulting graph has a $P_9$ subgraph. We will call these seven cycles
$D_1, \ldots D_7$ as in Table~\ref{tab:G2Dn}.

\begin{table}[htb]
    \centering
    \begin{tabular}{c|l}
         $D_1$ & 2,8,3,9 -- 0,4,10,6,1,5 \\
         $D_2$ & 0,2,9,5 -- 1,6,10,4,8 \\
         $D_3$ & 0,2,8,4 -- 1,5,9,3,10,6 \\
         $D_4$ & 1,5,9,3,8 -- 0,2,6,10,4 \\
         $D_5$ & 1,6,2,8 -- 0,4,10,3,9,5 \\
         $D_6$ & 3,8,4,10 -- 0,2,6,1,5 \\
         $D_7$ & 2,6,10,3,9 -- 0,4,8,1,5 \\
    \end{tabular}
    \caption{Seven pairs of cycles in $G_2$ called $D_1, \ldots, D_7$.}
    \label{tab:G2Dn}
\end{table}

\smallskip
\noindent%
{\bf Step II:} 
Eliminate $A_5$.

We will need to introduce two more Petersen family graph minors later,
but let us begin by ruling out some of the pairs we already have. 
As in our argument for $G_1$, we assume that we have a knotless
embedding of $G_2$ and step by step argue that various 
cycle pairs are not linked (i.e. do not have odd linking number)
using Lemma~\ref{lem:D4}. Eventually, this will allow us
to deduce that all seven pairs $B_1, \ldots, B_7$ are
not linked. This is a contradiction since Sachs~\cite{S}
showed that in any embedding of $P_9$, there
must be a pair of cycles with odd linking number.
The contradiction shows that there is no such knotless embedding
and $G_2$ is IK.

We will see that $A_5$ is not linked by showing it
results in a $\DD$ with every pair $B_1, \ldots, B_7$. 
Indeed the vertices of the $\DD$ are formed by contracting the
following vertices

\begin{align*}
B_1 & \{0\}, \{1,5,9\}, \{2,6\}, \{3,4,10\} &
B_2 & \{0,4\}, \{1,5,6,9\}, \{2\}, \{3,10\} \\
B_3 & \{0\}, \{1,6\}, \{2,5,9\}, \{4,10\} & 
B_4 & \{0,4,10\}, \{1,2,5,9\}, \{3,8\}, \{6\} \\
B_5 & \{0\}, \{1,5,6\}, \{2,9\}, \{3,4,10\} & 
B_7 & \{0,4\}, \{1,5\}, \{2,6,9\},\{3,10\}. \\
\end{align*}
For $B_6 = $ 1,6,2,8 -- 0,4,10,3,9,5, we first split one of the $A_5$
cycles: 
$\mbox{0,4,10,3,7 } = \mbox{0,4,8,3,7 } \cup \mbox{ 3,8,4,10}$.
One of the two summands must link with the other $A_5$ cycle
$1,5,9,2,6$. If 
$\mbox{lk}((\mbox{3,8,4,10}),(\mbox{1,5,9,2,6})) \neq 0$, 
then, by contracting edges, we form a $\DD$ with $A_5$
whose vertices are $\{1,2,6\}, \{3,4,10\}, \{5,9\}, \{8\}$.
On the other hand, if 
$\mbox{lk}((\mbox{0,4,8,3,7}),(\mbox{1,5,9,2,6})) \neq 0$, 
then we will split the $B_6$ cycle
$\mbox{0,4,10,3,9,5 } = \mbox{0,4,10,5 } \cup \mbox{ 3,9,5,10}$.
When 
$\mbox{lk}((\mbox{1,6,2,8}),(\mbox{0,4,10,5})) \neq 0$,
we have a $\DD$ with vertices $\{0,4\}, \{1,2,6\}, \{5\}, \{8\}$
and when
$\mbox{lk}((\mbox{1,6,2,8}),$ $(\mbox{3,9,5,10})) \neq 0$,
the $\DD$ is on $\{1,2,6\}, \{3\}, \{5,9\}, \{8\}$.

We have shown that, for each $B_i$, we must have a $\DD$
with $A_5$. Sachs~\cite{S} showed that in every embedding 
of $P_9$, there is a pair of linked cycles. Thus, in our
embedding of $G_2$, at least one $B_i$ is linked. If $A_5$
were also linked, that would result in a $\DD$ with 
a knotted cycle by Lemma~\ref{lem:D4}. This
contradicts our assumption that we have a knotless embedding
of $G_2$. Therefore, going forward, we can assume $A_5$ is not linked.

\smallskip
\noindent%
{\bf Step III:} 
Eliminate $B_7$ and $A_4$.

We next argue that $B_7$ is not linked by comparing with
$C_1, \ldots, C_7$. For $C_4$, $C_6$, and $C_7$, we 
immediately form a $\DD$ as follows

\begin{align*}
C_4 & \{0,4\}, \{1,5,8\}, \{2,9\}, \{3,7\} &
C_6 & \{0\}, \{1,4,5,8\}, \{2,3,9\}, \{10\} \\
C_7 & \{0,4,8\}, \{1,5\}, \{2\}, \{3,9\}. \\
\end{align*}
For the remaining pairs, we will split a cycle of the $C_i$.
For $C_1$, write
$\mbox{0,2,9,5,1,7 } = \mbox{0,2,7 } \cup \mbox{ 1,5,9,2,7}$.
In the first case,
where 
$\mbox{lk}((\mbox{3,8,4,10}),$ $(\mbox{0,2,7})) \neq 0$,
the $\DD$ is on $\{0\}, \{2\}, \{3,10\}, \{4,8\}$.
In the second case, 
$\mbox{lk}((\mbox{3,8,4,10}),$ $(\mbox{1,5,9,7})) \neq 0$,
we have $\{1,5\}, \{2,9\}, \{3,10\}, \{4,8\}$.

For $C_2$, split
$\mbox{0,2,9,5,10,4 } = \mbox{0,2,9,5 }  \cup \mbox{ 0,4,10,5}$
with, in the first case, a $\DD$ on 
$\{0,5\}, \{1,8\}, \{2,9\}, \{3\}$
and, in the second, on
$\{0,4,5\}, \{1,8\}, \{3\}, \{10\}$.
For $C_5$,
$\mbox{0,2,8,1,7 } = \mbox{0,2,7 } \cup \mbox{ 1,7,2,8}$,
the first case is 
$\{0\}, \{2\}, \{3,9,10\}, \{5\}$
and the second has $\{1,8\}, \{2\}, \{3,9,10\}, \{5\}$.

Finally, for $C_3$
split 
$\mbox{0,4,10,5,1,7 } = \mbox{1,6,7 } \cup \mbox{ 1,6,7,0,4,10,5}$.
In the first case, there is a $\DD$ with vertices
$\{1\}, \{2,3,9\}, \{6\}, \{8\}$.
In the second case, we split the same cycle a second time:
$\mbox{1,6,7,0,4,10,5 } = \mbox{1,5,10,6 } \cup 
\mbox{ 0,4,10,6,7}$.
In the first subcase, we have a $\DD$ with vertices
$ \{1,5\}, \{2,3,9\},\{6,10\}, \{8\}$
and in the second subcase, $\{0,4\}, \{2,3,9\}, \{6,10\}, \{8\}$.

Going forward, we can assume that $B_7$ is not linked.

We next argue that $A_4$ is not linked by comparing with
$D_1, \ldots, D_7$. For four links, we immediately 
give the vertices of the $\DD$:
\begin{align*}
D2 & \{0,5\}, \{1\}, \{2,9\}, \{6,10\} &
D3 & \{0\}, \{1,5\}, \{2\}, \{3,6,9,10\} \\
D4 & \{0\}, \{1,5\}, \{2,6,10\}, \{3,9\} &
D5 & \{0,5\}, \{1\}, \{2,6\}, \{3,9,10\}. \\
\end{align*}

For $D_1$, split the second cycle of $A_4$:
$\mbox{2,6,10,3,9 } = \mbox{3,9,4,10 } \cup \mbox{ 2,9,4,10,6}$.
In the first case, the verices of the $\DD$ are
$\{0,1,5\}, \{2,7\}, \{3,9\}, \{4,10\}$ and 
in the second, $\{0,1,5\}, \{2,9\}, \{3,7\}, \{4,6,10\}$.
For $D_6$, split the second cycle:
$\mbox{0,2,6,1,5 } = \mbox{0,2,9,5 } \cup \mbox{ 1,5,9,2,6}$.
In the first case, we have vertices 
$\{0,5\}, \{1,8\}, \{2,9\}, \{3,10\}$ and in the second,
$\{0,4\}, \{1,5\}, \{2,6,9\}, \{3,10\}$.
Finally, $D_7$ is the same pair of cycles as $B_7$, which 
we have assumed is not linked. Going forward, we will
assume $A_4$ is not linked.

\smallskip
\noindent%
{\bf Step IV:} Introduce pairs $E_i$ to 
eliminate $A_2$ and $A_3$. This leaves only $A_1$, $A_6$, and $A_7$.

We have already argued that we can assume $A_4$ and $A_5$ are 
not linked. In this step we eliminate $A_2$ and $A_3$, leaving
only three $A_i$ that could be linked. For this we use another
Petersen graph minor. 
Using the labelling of Figure~\ref{fig:G2},
partition the vertices as $\{0,8,9,10\}$ and $\{2,3,4,5\}$.
Contract edges $(0,7)$ and $(2,6)$. 
This resulting graph has a $K_{4,4}^-$ 
subgraph, where $(5,8)$ is the missing edge. As in 
Table~\ref{tab:G2En}, we will call the resulting
nine pairs of cycles $E_1, \ldots, E_9$.

\begin{table}[htb]
    \centering
    \begin{tabular}{c|l}
         $E_1$ & 2,8,4,9 -- 0,5,10,3,7 \\
         $E_2$ & 0,4,10,5 -- 2,8,3,9 \\
         $E_3$ & 0,2,6,10,5 -- 3,8,4,9 \\
         $E_4$ & 0,2,4,8 -- 3,9,5,10 \\
         $E_5$ & 4,9,5,10 -- 0,2,8,3,7 \\
         $E_6$ & 0,4,8,3,7 -- 2,6,10,5,9 \\
         $E_7$ & 0,5,9,3,7 -- 2,6,10,4,8 \\
         $E_8$ & 0,4,9,5 -- 2,6,10,3,8 \\
         $E_9$ & 0,2,9,5 -- 3,8,4,10 \\
    \end{tabular}
    \caption{Nine pairs of cycles in $G_2$ called $E_1, \ldots, E_9$.}
    \label{tab:G2En}
\end{table}

We will use $E_1, \ldots, E_9$ to show that $A_2$ may be assumed 
unlinked. 
Except for $E_1$, we list the vertices of the $\DD$:
\begin{align*} 
E_2 & \{0,4,10\}, \{2\}, \{3,9\}, \{5\} & 
E_3 & \{0,2,6,10\}, \{3,9\}, \{4\}, \{5\} \\
E_4 & \{0,2,4\}, \{1,8\}, \{3,5,9\}, \{10\} &
E_5 & \{0,2\}, \{3,7\}, \{4,10\}, \{5,9\} \\
E_6 & \{0,4\}, \{2,6,10\}, \{3,7\}, \{5,9\} & 
E_7 & \{0\}, \{1,8\}, \{2,4,6,10\}, \{3,5,7,9\} \\ 
E_8 & \{0,4\}, \{2,6,10\}, \{3\}, \{5,9\} & 
E_9 &  \{0,2\}, \{3\}, \{4,10\}, \{5,9\}. \\
\end{align*}
The argument for $E_1$ is a little involved. 
Let us split the second cycle
$\mbox{0,5,10,3,7 } = \mbox{0,5,10,6,1,7 } \cup \mbox{ 1,6,10,3,7}$.

\noindent%
Case 1: Suppose that 
$\mbox{lk}((\mbox{2,8,4,9}),(\mbox{0,5,10,6,1,7})) \neq 0$.
Next split the first cycle of $A_2$:
$\mbox{0,2,6,10,4 } = \mbox{0,2,6 } \cup \mbox{ 0,4,10,6}$.

Case 1a): 
Suppose that 
$\mbox{lk}((\mbox{0,2,6}),(\mbox{1,5,9,3,7})) \neq 0$.
We split the second $E_1$ cycle again:
$\mbox{0,5,10,6,1,7 } = \mbox{0,5,10,6 } \cup \mbox{ 0,6,1,7}$
In the first case, where 
$\mbox{lk}((\mbox{2,8,4,9}),(\mbox{0,5,10,6})) \neq 0$,
we have a $\DD$ with vertices
$\{0,6\}, \{2\}, \{5\}, \{9\}$. 
In the second case, 
$\mbox{lk}((\mbox{2,8,4,9}),(\mbox{0,6,1,7})) \neq 0$,
the $\DD$ vertices are $\{0,6\}, \{1,7\},$ $ \{2\}, \{9\}$.

Case 1b):
Suppose that 
$\mbox{lk}((\mbox{0,4,10,6}),(\mbox{1,5,9,3,7})) \neq 0$.
Split the second $E_1$ cycle again:
$\mbox{0,5,10,6,1,7 } = \mbox{0,5,10,6 } \cup \mbox{ 0,6,1,7}$.
In the first case, where
$\mbox{lk}((\mbox{2,8,4,9}),(\mbox{0,5,10,6})) \neq 0$,
the $\DD$ vertices are $\{0,6,10\}, \{4\}, \{5\}, \{9\}$.
In the second case, where 
$\mbox{lk}((\mbox{2,8,4,9}),(\mbox{0,6,1,7})) \neq 0$,
we have vertices $\{0,6\}, \{1,7\},$ $ \{4\}, \{9\}$.

\noindent%
Case 2: 
Suppose that 
$\mbox{lk}((\mbox{2,8,4,9}),(\mbox{1,6,10,3,7})) \neq 0$.
We split the fist cycle of $A_2$:
$\mbox{0,2,6,10,4 } = \mbox{0,2,6 } \cup \mbox{ 0,4,10,6}$.
In the first case, the $\DD$ has vertices
$\{1,3,7\}, \{2\}, \{6\}, \{9\}$ and in the second
$\{1,3,7\}, \{4\}, \{6,10\}, \{9\}$.

This completes the argument for $A_2$, which we henceforth assume
is not linked.

Next we again use $E_1, \ldots, E_9$ to see that $A_3$ is also 
not linked. Except for $E_8$ and $E_9$ we immediately have
a $\DD$:
\begin{align*}
E_1 & \{0,5\}, \{1,8\}, \{2,9\}, \{3,7,10\} &
E_2 & \{0,5\}, \{2,9\}, \{3\}, \{10\} \\
E_3 & \{0,2,5\}, \{3\}, \{6\}, \{9\} & 
E_4 & \{0,2\}, \{1,8\}, \{3,10\}, \{5,9\} \\
E_5 & \{0,2\}, \{3,7\}, \{5,9\}, \{10\} & 
E_6 & \{0\}, \{2,5,9\}, \{3,7\}, \{6,10\} \\
E_7 & \{0,5,9\}, \{2\}, \{3,7\}, \{6,10\} \\
\end{align*}
For $E_8$, split the first cycle
$\mbox{0,4,9,5 } = \mbox{0,5,1,7 } \cup \mbox{ 0,4,9,5,1,7}$.
In the first case, we have a $\DD$ with vertices
$\{0,5\}, \{1,7\}, \{2\}, \{3,6,10\}$.
In the second case, we further split the first cycle of $A_3$:
$\mbox{0,2,9,5 } = \mbox{2,8,4,9 } \cup \mbox{ 0,2,8,4,9,5}$ 
leading to two subcases. In the first subcase the $\DD$ is
$\{1,7\}, \{2,8\}, \{3,6,10\}, \{4,9\}$ and in the second
$\{0,4,5,9\}, \{1,7\}, \{2,8\}, \{3,6,10\}$.

For $E_9$ split the first cycle 
$\mbox{0,2,9,5 } = \mbox{0,2,6,1,5 } \cup \mbox{ 2,6,1,5,9}$
with a $\DD$ on first
$\{0,2,5\}, \{1,6\}, \{3,10\}, \{4,9\}$ 
and then 
$\{0,4\}, \{1,6\}, \{2,5,9\}, \{3,10\}$.

We will not need $E_1, \ldots, E_9$ in the remainder of the argument.
At this stage, we can assume that it is one of $A_1$, $A_6$, and 
$A_7$ that is linked.

\smallskip
\noindent%
{\bf Step V:} Eliminate $B_5$, $B_2$, and $B_3$, leaving only $B_1$, $B_4$, and $B_6$.

Our next step is to argue that we can assume
$B_5$ is not linked by comparing with the three remaining $A_i$'s.
For the first two, we immediately recognize a $\DD$:

\begin{align*}
A_1 & \{0,5\}, \{1,6\}, \{2,3,9\}, \{4,10\} &
A_6 & \{0\}, \{1,5,6\}, \{2,3,9\}, \{10\} \\
\end{align*}

For $A_7$, split the second cycle:
$\mbox{0,2,6,1,7 } =  \mbox{0,2,7 } \cup \mbox{ 2,6,1,7}$
so that the $\DD$ has vertices
$\{0\}, \{2\}, \{3,9,10\}, \{5\}$ in the first case
and then $\{1,6\}, \{2\},$ $ \{3,9,10\}, \{5\}$.
Going forward, we assume that $B_5$ is not linked.

Next we will eliminate $B_2$ and $B_3$. For $B_3$, we have a $\DD$
with each of the remaining $A_i$'s:
\begin{align*}
A_1 & \{0,5\}, \{1,6\}, \{2,9\}, \{4,10\} &
A_6 & \{0,2,9\}, \{1,6,10\}, \{3,8\}, \{5\} \\
A_7 & \{0,2\}, \{1,6\}, \{5,9\}, \{10\} 
\end{align*}
For $B_2$, notice first that, as we are assuming $A_3$ is not linked,
by a symmetry of $G_2$, we can assume that
 0,2,8,4 - 1,6,10,3,7 is also not linked.
Again, since $B_3$ is unlinked, the symmetric pair 
0,2,8,4 - 1,5,9,3,7 is also not linked.
Since $\mbox{1,5,9,3,10,6 } = \mbox{1,5,9,3,7 } \cup \mbox{ 1,6,10,3,7}$
we conclude that $B_2$ is not linked. Having eliminated four
of the $B_i$'s, going forward, we can assume that it is one 
of $B_1$, $B_4$, and $B_6$ that is linked. Recall that our ultimate
goal is to argue that none of the $B_i$ are linked and thereby force
a contradiction.

\smallskip
\noindent%
{\bf Step VI:} Eliminate $C_1$ and $A_7$. This leaves only $A_1$ and $A_6$
among the $A_i$ pairs.

Our next step is to argue that $C_1$ is not linked by 
comparing with the remaining three $A_i$'s.
For $A_1$ split the second cycle of $C_1$
$\mbox{0,2,9,5,1,7 } = \mbox{0,2,9,5 } \cup \mbox{ 0,5,1,7}$,
yielding first a $\DD$ on 
$\{0,5\}, \{2,9\}, \{3\}, \{4,10\}$ and then on
$\{0,5\}, \{1,7\}, \{3\}, \{4,10\}$.
For $A_6$, we have $\DD$ with vertices
$\{0,2,7,9\}, \{1,5\},$ $ \{3\}, \{10\}$.

For $A_7$, split the second cycle
$\mbox{0,2,6,1,7  } =  \mbox{0,2,8,1,7 } \cup \mbox{ 1,6,2,8}$.
In the first case, we have a $\DD$ with vertices
$\{0,1,2,7\}, \{3,10\}, \{5,9\}, \{8\}$. In
the second case, split the second cycle of $C_1$
$\mbox{0,2,9,5,1,7 } = \mbox{0,2,9,5 } \cup \mbox{ 0,5,1,7}$,
resulting in a $\DD$ either on 
$\{2\}, \{3,10\}, \{5,9\}, \{8\}$ or
$\{1\}, \{3,10\}, \{5\}, \{8\}$.

Now we can eliminate $A_7$. 
Using a symmetry of $G_2$ we will instead argue that the
pair $A_7'$ = 3,8,4,10 - 0,2,6,1,7 is not linked.
Note that this resembles $C_1$, which we just proved unlinked.
Since $\mbox{0,2,6,1,7 } \cup \mbox{ 0,2,9,5,1,7 } = \mbox{1,5,9,2,6}$
it will be enough to show that 3,8,4,10 - 1,5,9,2,6 
is not linked by comparing with the remaining $A_i$'s:
\begin{align*}
A_1 & \{1,2,6,9\}, \{3\}, \{4,10\}, \{5\} &
A_6 & \{1,5,6\}, \{2,9\}, \{3\}, \{10\} \\
A_7 & \{0,4\}, \{1,2,6\}, \{3,10\}, \{5,9\} \\
\end{align*}
Thus we can assume that it is $A_1$ or $A_6$ that is the linked
pair in our embedding of $G_2$.

\smallskip
\noindent%
{\bf Step VII:} Eliminate $B_1$ and $B_6$, leaving only $B_4$.

As for the $B_i$'s, only three candidates remain. We next eliminate
$B_1$ by comparing with the remaining two $A_i$'s.
For $A_1$, split the second cycle of $B_1$:
$\mbox{1,5,9,3,10,4,8 } = \mbox{1,5,10,4,8 } \cup \mbox{ 3,9,10,5}$
giving a $\DD$ on either
$\{0\}, \{1\}, \{2,6\}, \{4,5,10\}$
or $\{0\}, \{2,6\}, \{3,9\}, \{5,10\}$.
For $A_6$, using the same split of the second cycle of $B_1$
the vertices are either
$\{0,2\}, \{1,5,10\},$ $ \{4,9\}, \{6\}$ or
$\{0,2\}, \{3,9\},  \{5,10\}, \{6\}$.

To proceed, we will argue that $C_5$ is unlinked. In fact, we will 
show that it is $D_6 = C_5'$, the result of applying the symmetry 
of $G_2$, that is not linked by comparing with the two remaining 
$A_i$'s:
\begin{align*}
A_1& \{0,5\}, \{1,2,6\}, \{3\}, \{4,10\} &
A_6& \{0,2\}, \{1,5,6\}, \{3\}, \{10\}
\end{align*}

We can now eliminate $B_6$ by comparing with the $C_i$'s.
This will leave only $B_4$, which, therefore, must be linked.
We have already argued that $C_1$ and $C_5$ are not linked. 
Also, by a symmetry of $G_2$, since $B_7$ is not linked, $C_4$ is also
not linked. For $C_3$ and $C_7$, we immediately see a $\DD$:
\begin{align*}
C_3 & \{0,4,5,10\}, \{1\}, \{2,6\}, \{3,9\} &
C_7 & \{0,4\}, \{1\}, \{2,8\}, \{3,9,5\}.
\end{align*}

For $C_2$, split the second cycle:
$\mbox{0,2,9,5,10,4 } = \mbox{0,2,6,10,4 } \cup \mbox{ 2,6,10,5,9}$.
In the first case, we have a $\DD$ on 
$\{0,4,10\}, \{1,8\}, \{2,6\}, \{3\}$.
In the second case, split the second cycle of $B_6$:
$\mbox{0,4,10,3,9,5 } = \mbox{0,4,10,3,7 } \cup \mbox{ 0,5,9,3,7}$
giving a $\DD$ with vertices
$\{1,8\}, \{2,6\}, \{3,7\}, \{10\}$ in the first subcase
and $\{1,8\}, \{2,6\}, \{3,7\}, \{5,9\}$ in the second.

For $C_6$, we split the second cycle of $B_6$:
$\mbox{0,4,10,3,9,5 } = \mbox{0,4,10,5} \cup \mbox{ 3,9,5,10}$
giving, first, a $\DD$ on $\{0\}, \{1,8\}, \{2\}, \{4,5,10\}$ 
and, second, a $\DD$ on $\{1,8\}, \{2\},$ $ \{3,9\}, \{5,10\}$.

\smallskip
\noindent%
{\bf Step VIII:} Introduce $F_i$ pairs to eliminate $B_4$ and complete the argument.

Since $B_1, \ldots, B_7$, represent the pairs of cycles
in an embedding of $P_9$, we know by \cite{S} that at least
one pair must have odd linking number. We have just argued 
that all but $B_4$ are not linked, so we can conclude that
it is $B_4$ that has odd linking number in our embedding 
of $G_2$. We will now derive a contradiction by using a final
Petersen family graph minor.

\begin{table}[htb]
    \centering
    \begin{tabular}{c|l}
         $F_1$ & 0,5,1,7 -- 2,6,10,3,8 \\
         $F_2$ & 0,4,8,1,5 -- 2,6,10,3,7 \\
         $F_3$ & 0,2,8,4 -- 1,6,10,3,7 \\
         $F_4$ & 0,2,7 -- 1,6,10,3,8 \\
         $F_5$ & 1,5,10,6 -- 2,7,3,8 \\
         $F_6$ & 1,7,3,8 -- 0,2,6,10,5 \\
         $F_7$ & 1,6,2,8 -- 0,5,10,3,7 \\
         $F_8$ & 1,6,2,7 -- 0,4,8,3,10,5 \\
    \end{tabular}
    \caption{Eight pairs of cycles in $G_2$ called $F_1, \ldots, F_8$.}
    \label{tab:G2Fn}
\end{table}

Our last set of cycles comes from a $P_8$ minor. This is the
Petersen family graph on eight vertices that is not $K_{4,4}^-$.
Using the labelling of $G_2$ in Figure~\ref{fig:G2}, 
contracting edges $(0,4)$ and $(0,5)$ and deleting vertex 9
results in a graph with a $P_8$ subgraph. This graph has
eight pairs of cycles shown in Table~\ref{tab:G2Fn}.

Using $B_4$ will derive a $\DD$ with each $F_i$. 
For the first five $F_i$ we immediately find a $\DD$:

\begin{align*}
F_1 & \{0\}, \{1,2,6\}, \{3\}, \{4,10\} &
F_2 & \{0,2\}, \{1,5,6\}, \{3\}, \{10\} \\
F_3 & \{0,5\}, \{1,2,6\}, \{3\}, \{4,10\} &
F_4 & \{0,2\}, \{1,5,6\}, \{3\}, \{10\} \\
F_5 & \{0,5\}, \{1,2,6\}, \{3\}, \{4,10\}. \\
\end{align*}

Since $B_4$ is linked, we deduce that the pair 
$B_4' = 2,7,3,9 -- 0,4,8,1,5$, obtained by the 
symmetry of $G_2$, is also linked. Using $B_4'$
we have a $\DD$ with the remaining cycles of $F$:

\begin{align*}
F_6 & \{0,5\}, \{1,8\}, \{2\}, \{3,7\} &
F_7 & \{0,5\}, \{1,8\}, \{2\}, \{3,7\} \\
F_8 & \{0,4,5,8\}, \{1\}, \{2,7\}, \{3\} \\
\end{align*}

We have shown that there is a $\DD$ with each $F_i$ using
the pairs $B_4$ or $B_4'$, both of which must be linked. 
Since the $F_1, \ldots, F_8$
represent the cycle pairs of a $P_8$ minor, at least one
of them has odd linking number~\cite{S}. By 
Lemma~\ref{lem:D4}, our embedding of $G_2$ has a
knotted cycle. This contradicts our assumption
that we were working with a knotless embedding. 
The contradiction shows that there is no such knotless
embedding and $G_2$ is IK. This completes the proof
that $G_2$ is MMIK.

\section{Maxnik graphs of order ten}
\label{sec:appmnik}


In this section we describe the maxnik graphs of order ten. Recall (see~\cite{EFM}) that a maximal $2$-apex graph is maxnik. 
There are 14 maximal $2$-apex graphs formed as the join $K_2 \ast T_8$ of $K_2$ with one of the 14 triangulations
on eight vertices, see Bowen and Fisk~\cite{BF}. 
We list the 14 maximal $2$-apex graphs of order ten in Appendix C.
Aside from those 14 there are 35 additional maxnik graphs. Our computer search, described in 
Section~\ref{sec:ord10} above, shows that there are no other maxnik
graphs beyond these 49. In this section we argue that the 35
non $2$-apex graphs that
we have found are indeed maxnik. The graphs are listed below.

To show a graph is maxnik requires two things. Using Naimi's implementation~\cite{NW}
of Miller and Naimi's~\cite{MN} algorithm, we have verified for each graph $G$
that whenever we add
an edge $e \not\in E(G)$, the graph $G+e$ is IK. It remains to show that each of the graphs is nIK. We divide the graphs into three bins. For the first
14 graphs we argue the graph is nIK by demonstrating a $2$-apex child. We handle the next three graphs using two lemmas from \cite{EFM}. For the 
remaining 18 graphs, we give a knotless embedding.

\subsection{Graphs with a $2$-apex child}
In this subsection we list the first 14 of the 35 maxnik graphs of order ten  that are not $2$-apex. We show these graphs are nIK by 
demonstrating a $2$-apex child.

For each graph $G$ in this list of 14, 
we provide the size, graph6 format~\cite{sage}, edge list, and the
vertices of a 
triangle in the graph. Making a $\ty$ move on that triangle 
results in a child $H$ that is $2$-apex. In each case, $H$ becomes planar
on deleting vertex $9$ and the new degree $3$ vertex. 
By Lemma~\ref{lem:tyyt}, since $G$ has a $2$-apex (hence nIK) child, 
$G$ is also nIK. 

\begin{enumerate}
    \item Size: 33; graph6 format: \verb"ICf^f\~~w"; triangle: $0,3,6$
$$[(0, 3), (0, 4), (0, 5), (0, 6), (0, 7), (0, 9), (1, 5), (1, 6), (1, 7), (1, 8), (1, 9),$$ 
$$(2, 6), (2, 7), (2, 8), (2, 9), (3, 4), (3, 5), (3, 6), (3, 8), (3, 9), (4, 5), (4, 7),$$
$$(4, 8), (4, 9), (5, 7), (5, 8), (5, 9), (6, 7), (6, 8), (6, 9), (7, 8), (7, 9), (8, 9)]$$
    \item Size: 33; graph6 format: \verb"ICxu|~{~w"; triangle: $0,3,8$
$$[(0, 3), (0, 4), (0, 6), (0, 7), (0, 8), (0, 9), (1, 4), (1, 5), (1, 6), (1, 8), (1, 9),$$
$$(2, 4), (2, 5), (2, 7), (2, 8), (2, 9), (3, 5), (3, 6), (3, 7), (3, 8), (3, 9), (4, 6),$$
$$(4, 7), (4, 8), (4, 9), (5, 6), (5, 7), (5, 8), (5, 9), (6, 7), (6, 9), (7, 9), (8, 9)]$$
    \item Size: 33; graph6 format: \verb"ICvbm~}~w"; triangle $0,5,7$
$$[(0, 3), (0, 4), (0, 5), (0, 7), (0, 8), (0, 9), (1, 4), (1, 5), (1, 6), (1, 7), (1, 8),$$ 
$$(1, 9), (2, 5), (2, 6), (2, 8), (2, 9), (3, 4), (3, 6), (3, 7), (3, 8), (3, 9), (4, 7),$$ 
$$(4, 8), (4, 9), (5, 6), (5, 7), (5, 8), (5, 9), (6, 7), (6, 8), (6, 9), (7, 9), (8, 9)]$$
    \item Size: 33; graph6 format: \verb"IEirt~}~w"; triangle $0,4,7$
$$[(0, 3), (0, 4), (0, 5), (0, 7), (0, 8), (0, 9), (1, 3), (1, 6), (1, 8), (1, 9), (2, 4),$$ 
$$(2, 5), (2, 6), (2, 7), (2, 8), (2, 9), (3, 5), (3, 6), (3, 7), (3, 8), (3, 9), (4, 6),$$ 
$$(4, 7), (4, 8), (4, 9), (5, 7), (5, 8), (5, 9), (6, 7), (6, 8), (6, 9), (7, 9), (8, 9)]$$
    \item Size: 33; graph6 format: \verb"IEhuV~}~w"; triangle $0,3,7$
$$[(0, 3), (0, 4), (0, 6), (0, 7), (0, 8), (0, 9), (1, 3), (1, 5), (1, 6), (1, 7), (1, 8),$$ 
$$(1, 9), (2, 4), (2, 5), (2, 7), (2, 8), (2, 9), (3, 5), (3, 7), (3, 8), (3, 9), (4, 6),$$ 
$$(4, 7), (4, 8), (4, 9), (5, 7), (5, 8), (5, 9), (6, 7), (6, 8), (6, 9), (7, 9), (8, 9)]$$
    \item Size: 33; graph6 format: \verb"IEhvVn}~w"; triangle $0,3,7$
$$[(0, 3), (0, 4), (0, 6), (0, 7), (0, 8), (0, 9), (1, 3), (1, 5), (1, 6), (1, 7), (1, 8),$$ 
$$(1, 9), (2, 4), (2, 5), (2, 6), (2, 7), (2, 8), (2, 9), (3, 5), (3, 7), (3, 8), (3, 9),$$ 
$$(4, 6), (4, 8), (4, 9), (5, 7), (5, 8), (5, 9), (6, 7), (6, 8), (6, 9), (7, 9), (8, 9)]$$
    \item Size: 33; graph6 format: \verb"IEh~f]}~w"; triangle $0,4,7$
$$[(0, 3), (0, 4), (0, 6), (0, 7), (0, 8), (0, 9), (1, 3), (1, 5), (1, 6), (1, 7), (1, 9),$$ 
$$(2, 4), (2, 5), (2, 6), (2, 7), (2, 8), (2, 9), (3, 5), (3, 6), (3, 8), (3, 9), (4, 5),$$
$$(4, 7), (4, 8), (4, 9), (5, 7), (5, 8), (5, 9), (6, 7), (6, 8), (6, 9), (7, 9), (8, 9)]$$
    \item Size: 33; graph6 format: \verb"IQjnex~~w"; triangle $0,2,6$
$$[(0, 2), (0, 4), (0, 5), (0, 6), (0, 7), (0, 9), (1, 3), (1, 5), (1, 6), (1, 7), (1, 8),$$ 
$$(1, 9), (2, 4), (2, 5), (2, 6), (2, 8), (2, 9), (3, 6), (3, 7), (3, 8), (3, 9), (4, 5),$$ 
$$(4, 7), (4, 8), (4, 9), (5, 7), (5, 8), (5, 9), (6, 8), (6, 9), (7, 8), (7, 9), (8, 9)]$$
    \item Size: 33; graph6 format: \verb"IQzTuz}~w"; triangle $3,6,8$
$$[(0, 2), (0, 4), (0, 5), (0, 6), (0, 7), (0, 8), (0, 9), (1, 3), (1, 4), (1, 5), (1, 7),$$
$$(1, 8), (1, 9), (2, 4), (2, 6), (2, 8), (2, 9), (3, 5), (3, 6), (3, 7), (3, 8), (3, 9),$$
$$(4, 6), (4, 7), (4, 8), (4, 9), (5, 7), (5, 8), (5, 9), (6, 8), (6, 9), (7, 9), (8, 9)]$$
    \item Size: 33; graph6 format: \verb"IQzTvz]~w"; triangle $3,6,8$
$$[(0, 2), (0, 4), (0, 5), (0, 6), (0, 7), (0, 8), (0, 9), (1, 3), (1, 4), (1, 5), (1, 7),$$ 
$$(1, 8), (1, 9), (2, 4), (2, 6), (2, 7), (2, 9), (3, 5), (3, 6), (3, 7), (3, 8), (3, 9),$$
$$(4, 6), (4, 7), (4, 8), (4, 9), (5, 7), (5, 8), (5, 9), (6, 8), (6, 9), (7, 9), (8, 9)]$$
    \item Size: 33; graph6 format: \verb"IQzTu~]~w"; triangle $0,5,7$
$$[(0, 2), (0, 4), (0, 5), (0, 6), (0, 7), (0, 8), (0, 9), (1, 3), (1, 4), (1, 5), (1, 7),$$ 
$$(1, 8), (1, 9), (2, 4), (2, 6), (2, 9), (3, 5), (3, 6), (3, 7), (3, 8), (3, 9), (4, 6),$$ 
$$(4, 7), (4, 8), (4, 9), (5, 7), (5, 8), (5, 9), (6, 7), (6, 8), (6, 9), (7, 9), (8, 9)]$$
    \item Size: 33; graph6 format: \verb"IQyuvx}~w"; triangle $0,4,6$
$$[(0, 2), (0, 4), (0, 5), (0, 6), (0, 7), (0, 9), (1, 3), (1, 4), (1, 6), (1, 7), (1, 8),$$ 
$$(1, 9), (2, 4), (2, 5), (2, 7), (2, 8), (2, 9), (3, 5), (3, 6), (3, 7), (3, 8), (3, 9),$$ 
$$(4, 6), (4, 7), (4, 8), (4, 9), (5, 7), (5, 8), (5, 9), (6, 8), (6, 9), (7, 9), (8, 9)]$$
    \item Size: 33; graph6 format: \verb"IQyvux|~w"; triangle $1,3,6$
$$[(0, 2), (0, 4), (0, 5), (0, 6), (0, 7), (0, 9), (1, 3), (1, 4), (1, 6), (1, 7), (1, 8),$$
$$(1, 9), (2, 4), (2, 5), (2, 6), (2, 8), (2, 9), (3, 5), (3, 6), (3, 7), (3, 8), (3, 9),$$ 
$$(4, 6), (4, 7), (4, 8), (4, 9), (5, 7), (5, 8), (5, 9), (6, 9), (7, 8), (7, 9), (8, 9)]$$
    \item Size: 33; graph6 format: \verb"IUZurzm~w"; triangle $0,3,6$
$$[(0, 2), (0, 3), (0, 5), (0, 6), (0, 8), (0, 9), (1, 3), (1, 4), (1, 5), (1, 6), (1, 7),$$ 
$$(1, 8), (1, 9), (2, 4), (2, 5), (2, 7), (2, 8), (2, 9), (3, 5), (3, 6), (3, 7), (3, 9),$$ 
$$(4, 6), (4, 7), (4, 8), (4, 9), (5, 7), (5, 8), (5, 9), (6, 8), (6, 9), (7, 9), (8, 9)]$$
\end{enumerate}

\subsection{Lemmas from \cite{EFM}}
In this subsection, we argue that graphs 15, 16, and 17 (listed below) are nIK using
ideas from \cite{EFM}. 

Graphs 16 and 17 have a degree $2$ vertex and
we can recognize them as clique sums. In both cases, the graph is the sum
of $K_3$ and the Heawood family graph $E_9$ over $K_2$. Since $K_3$
and $E_9$ are both nIK, by \cite[Lemma 3.1]{EFM} graphs 16 and 17 are nIK.

Graph 15 has a degree $3$ vertex and is the clique sum over $K_3$ of $K_4$
and $E_9$. In the embedding of $E_9$ in \cite[Figure 2]{EFM}
the $K_3$ is given by the vertices $a,b,c$, which bound a disk
whose interior is disjoint from the graph. By \cite[Lemma 3.4]{EFM},
Graph 15 is also nIK.

\begin{enumerate}
\setcounter{enumi}{14}

    \item Size: 24; graph6 format: \verb"ICRffQmn_"
$$[(0, 3), (0, 5), (0, 6), (0, 7), (0, 8), (0, 9), (1, 4), (1, 5), (1, 6), (1, 7), (2, 5), (2, 6),$$ $$(2, 7), (2, 8), (2, 9), (3, 6), (3, 9), (4, 7), (4, 8), (4, 9), (5, 8), (5, 9), (6, 8), (6, 9)]$$
    \item Size: 23; graph6 format: \verb"I?qtdo}^_"
$$[(0, 4), (0, 5), (0, 6), (0, 7), (1, 4), (1, 9), (2, 5), (2, 6), (2, 7), (2, 8), (2, 9), (3, 5),$$
$$(3, 6), (3, 7), (3, 8), (3, 9), (4, 7), (4, 8), (4, 9), (5, 8), (5, 9), (6, 8), (6, 9)]$$
    \item Size: 23; graph6 format: \verb"I?qtfo}N_"
$$[(0, 4), (0, 5), (0, 6), (0, 7), (1, 4), (1, 7), (2, 5), (2, 6), (2, 7), (2, 8), (2, 9), (3, 5),$$ $$(3, 6), (3, 7), (3, 8), (3, 9), (4, 7), (4, 8), (4, 9), (5, 8), (5, 9), (6, 8), (6, 9)]$$
\end{enumerate}

\subsection{Knotless embeddings}

\begin{figure}[htb]
\centering
\includegraphics[scale=.8]{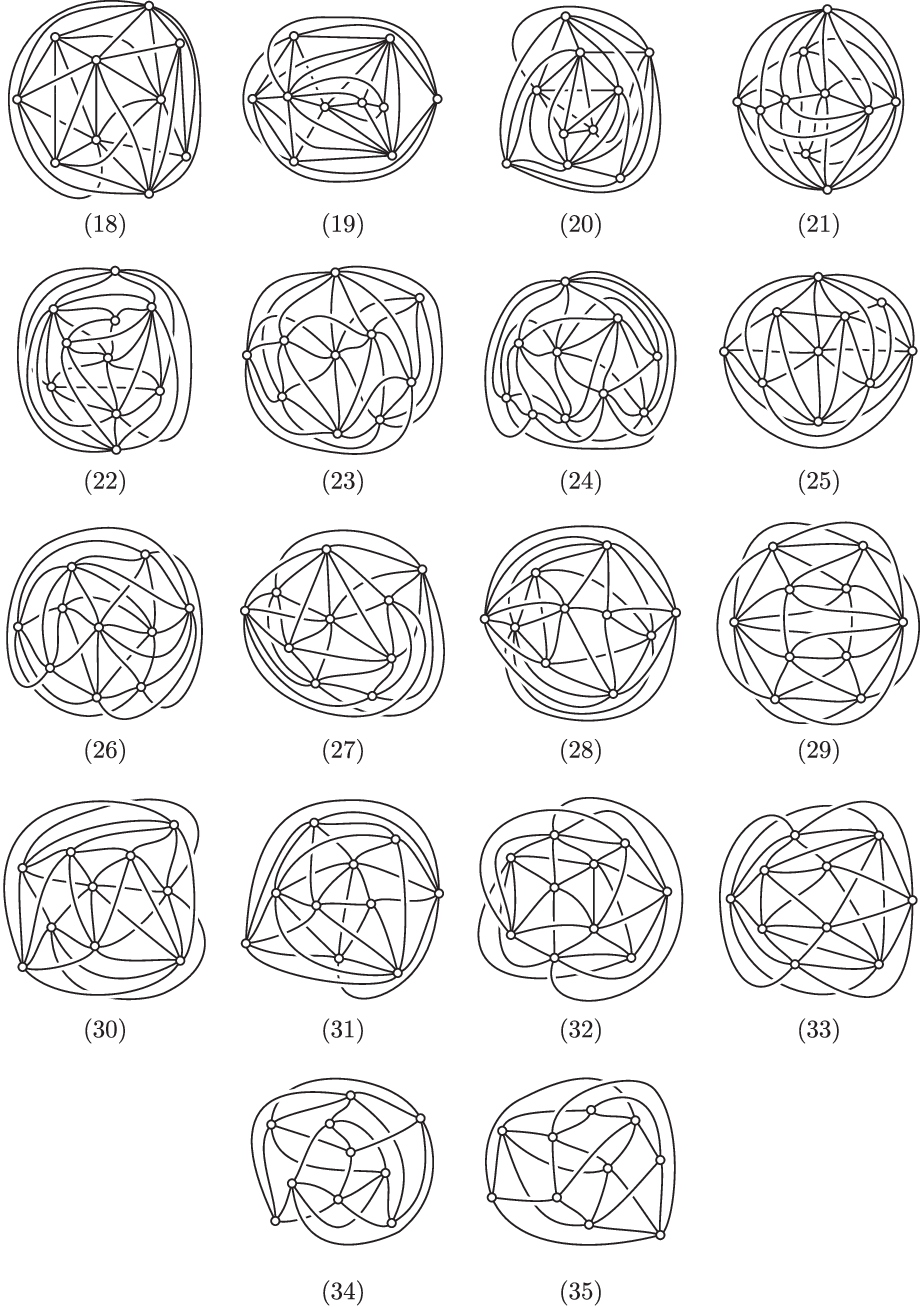}
\caption{Knotless embeddings of 18 graphs.} 
\label{fig:18nik}
\end{figure}

We show that the remaining 18 graphs (listed below) are nIK by 
presenting knotless
embeddings in Figure~\ref{fig:18nik}. Recall that this means when 
we apply Naimi's findEasyKnots program~\cite{NW}
to the embedding, it confirms that every cycle in the graph
is an unknot.

\begin{enumerate}
\setcounter{enumi}{17}
    \item Size: 34; graph6 format: \verb"IQjuz~nnw"
$$[(0, 2), (0, 4), (0, 5), (0, 6), (0, 8), (0, 9), (1, 3), (1, 5), (1, 6), (1, 7), (1, 8), (2, 4),$$ $$(2, 5), (2, 7), (2, 8), (2, 9), (3, 5), (3, 6), (3, 7), (3, 9), (4, 6), (4, 7), (4, 8),$$
$$(4, 9), (5, 6), (5, 7), (5, 8), (5, 9), (6, 7), (6, 8), (6, 9), (7, 8), (7, 9), (8, 9)]$$
    \item Size: 33; graph6 format: \verb"IQjUnzz~w"
$$[(0, 2), (0, 4), (0, 5), (0, 6), (0, 7), (0, 8), (0, 9), (1, 3), (1, 5), (1, 6), (1, 7),$$ 
$$(1, 8), (1, 9), (2, 4), (2, 7), (2, 8), (2, 9), (3, 5), (3, 6), (3, 7), (3, 8), (3, 9),$$ 
$$(4, 7), (4, 8), (4, 9), (5, 6), (5, 7), (5, 9), (6, 8), (6, 9), (7, 8), (7, 9), (8, 9)]$$
    \item Size: 33; graph6 format: \verb"IQjne~^^w"
$$[(0, 2), (0, 4), (0, 5), (0, 6), (0, 7), (0, 8), (1, 3), (1, 5), (1, 6), (1, 7), (1, 8),$$ 
$$(1, 9), (2, 4), (2, 5), (2, 6), (2, 9), (3, 6), (3, 7), (3, 8), (3, 9), (4, 5), (4, 7),$$ 
$$(4, 8), (4, 9), (5, 7), (5, 8), (5, 9), (6, 7), (6, 8), (6, 9), (7, 8), (7, 9), (8, 9)]$$
    \item Size: 33; graph6 format: \verb"IQjne|~~W"
$$[(0, 2), (0, 4), (0, 5), (0, 6), (0, 7), (0, 9), (1, 3), (1, 5), (1, 6), (1, 7), (1, 8),$$ 
$$(1, 9), (2, 4), (2, 5), (2, 6), (2, 8), (2, 9), (3, 6), (3, 7), (3, 8), (3, 9), (4, 5),$$ 
$$(4, 7), (4, 8), (4, 9), (5, 7), (5, 8), (5, 9), (6, 7), (6, 8), (7, 8), (7, 9), (8, 9)]$$
    \item Size: 33; graph6 format: \verb"IQjne|~zw"
$$[(0, 2), (0, 4), (0, 5), (0, 6), (0, 7), (0, 9), (1, 3), (1, 5), (1, 6), (1, 7), (1, 8),$$ 
$$(1, 9), (2, 4), (2, 5), (2, 6), (2, 8), (2, 9), (3, 6), (3, 7), (3, 8), (4, 5), (4, 7),$$ 
$$(4, 8), (4, 9), (5, 7), (5, 8), (5, 9), (6, 7), (6, 8), (6, 9), (7, 8), (7, 9), (8, 9)]$$
    \item Size: 33; graph6 format: \verb"IQzTvz^~o"
$$[(0, 2), (0, 4), (0, 5), (0, 6), (0, 7), (0, 8), (0, 9), (1, 3), (1, 4), (1, 5), (1, 7),$$ 
$$(1, 8), (1, 9), (2, 4), (2, 6), (2, 7), (2, 9), (3, 5), (3, 6), (3, 7), (3, 8), (3, 9),$$ 
$$(4, 6), (4, 7), (4, 8), (4, 9), (5, 7), (5, 8), (5, 9), (6, 8), (6, 9), (7, 8), (7, 9)]$$
    \item Size: 33; graph6 format: \verb"IQzTvv^~o"
$$[(0, 2), (0, 4), (0, 5), (0, 6), (0, 7), (0, 8), (0, 9), (1, 3), (1, 4), (1, 5), (1, 7),$$ 
$$(1, 8), (1, 9), (2, 4), (2, 6), (2, 7), (2, 9), (3, 5), (3, 6), (3, 7), (3, 8), (3, 9),$$ 
$$(4, 6), (4, 7), (4, 8), (4, 9), (5, 8), (5, 9), (6, 7), (6, 8), (6, 9), (7, 8), (7, 9)]$$
    \item Size: 33; graph6 format: \verb"IQzTu~^~o"
$$[(0, 2), (0, 4), (0, 5), (0, 6), (0, 7), (0, 8), (0, 9), (1, 3), (1, 4), (1, 5), (1, 7),$$ 
$$(1, 8), (1, 9), (2, 4), (2, 6), (2, 9), (3, 5), (3, 6), (3, 7), (3, 8), (3, 9), (4, 6),$$ 
$$(4, 7), (4, 8), (4, 9), (5, 7), (5, 8), (5, 9), (6, 7), (6, 8), (6, 9), (7, 8), (7, 9)]$$
    \item Size: 33; graph6 format: \verb"IQyuz~{~o"
$$[(0, 2), (0, 4), (0, 5), (0, 6), (0, 8), (0, 9), (1, 3), (1, 4), (1, 6), (1, 7), (1, 8),$$ 
$$(1, 9), (2, 4), (2, 5), (2, 7), (2, 8), (2, 9), (3, 5), (3, 6), (3, 7), (3, 8), (3, 9),$$ 
$$(4, 6), (4, 7), (4, 8), (4, 9), (5, 6), (5, 7), (5, 8), (5, 9), (6, 7), (6, 9), (7, 9)]$$
    \item Size: 33; graph6 format: \verb"IQyuz~{zw"
$$[(0, 2), (0, 4), (0, 5), (0, 6), (0, 8), (0, 9), (1, 3), (1, 4), (1, 6), (1, 7), (1, 8),$$ 
$$(1, 9), (2, 4), (2, 5), (2, 7), (2, 8), (2, 9), (3, 5), (3, 6), (3, 7), (3, 8), (4, 6),$$ 
$$(4, 7), (4, 8), (4, 9), (5, 6), (5, 7), (5, 8), (5, 9), (6, 7), (6, 9), (7, 9), (8, 9)]$$
    \item Size: 33; graph6 format: \verb"IQyuz~{vw"
$$[(0, 2), (0, 4), (0, 5), (0, 6), (0, 8), (0, 9), (1, 3), (1, 4), (1, 6), (1, 7), (1, 8),$$ 
$$(1, 9), (2, 4), (2, 5), (2, 7), (2, 8), (3, 5), (3, 6), (3, 7), (3, 8), (3, 9), (4, 6),$$ 
$$(4, 7), (4, 8), (4, 9), (5, 6), (5, 7), (5, 8), (5, 9), (6, 7), (6, 9), (7, 9), (8, 9)]$$
    \item Size: 32; graph6 format: \verb"IQzTrj~~o"
$$[(0, 2), (0, 4), (0, 5), (0, 6), (0, 8), (0, 9), (1, 3), (1, 4), (1, 5), (1, 7), (1, 8),$$ 
$$(1, 9), (2, 4), (2, 6), (2, 7), (2, 8), (2, 9), (3, 5), (3, 6), (3, 7), (3, 8), (3, 9),$$ 
$$(4, 6), (4, 8), (4, 9), (5, 7), (5, 8), (5, 9), (6, 8), (6, 9), (7, 8), (7, 9)]$$
    \item Size: 32; graph6 format: \verb"IUZuvzmno"
$$[(0, 2), (0, 3), (0, 5), (0, 6), (0, 7), (0, 8), (0, 9), (1, 3), (1, 4), (1, 5), (1, 6),$$ 
$$(1, 7), (1, 8), (2, 4), (2, 5), (2, 7), (2, 8), (2, 9), (3, 5), (3, 6), (3, 7), (3, 9),$$
$$ (4, 6), (4, 7), (4, 8), (4, 9), (5, 7), (5, 8), (5, 9), (6, 8), (6, 9), (7, 9)]$$
    \item Size: 31; graph6 format: \verb"IEivux~zo"
$$[(0, 3), (0, 4), (0, 5), (0, 6), (0, 7), (0, 9), (1, 3), (1, 6), (1, 7), (1, 8), (1, 9),$$ 
$$(2, 4), (2, 5), (2, 6), (2, 8), (2, 9), (3, 5), (3, 6), (3, 7), (3, 8), (4, 6), (4, 7),$$ 
$$(4, 8), (4, 9), (5, 7), (5, 8), (5, 9), (6, 8), (6, 9), (7, 8), (7, 9)]$$
    \item Size: 31; graph6 format: \verb"IEhvuzn^o"
$$[(0, 3), (0, 4), (0, 6), (0, 7), (0, 8), (1, 3), (1, 5), (1, 6), (1, 7), (1, 8), (1, 9), $$
$$(2, 4), (2, 5), (2, 6), (2, 8), (2, 9), (3, 5), (3, 6), (3, 7), (3, 9), (4, 6), (4, 7), $$
$$(4, 8), (4, 9), (5, 7), (5, 8), (5, 9), (6, 8), (6, 9), (7, 8), (7, 9)]$$
    \item Size: 31; graph6 format: \verb"IEnb~jm}W"
$$[(0, 3), (0, 4), (0, 5), (0, 7), (0, 8), (0, 9), (1, 3), (1, 5), (1, 6), (1, 7), (1, 8), $$
$$(1, 9), (2, 4), (2, 5), (2, 6), (2, 7), (2, 8), (2, 9), (3, 4), (3, 6), (3, 7), (3, 9), $$
$$(4, 6), (4, 8), (4, 9), (5, 6), (5, 7), (5, 8), (6, 8), (7, 9), (8, 9)]$$
    \item Size: 23; graph6 format: \verb"ICpVbrkN_"
$$[(0, 3), (0, 4), (0, 6), (0, 8), (1, 4), (1, 5), (1, 6), (1, 7), (1, 8), (2, 6), (2, 7), (2, 8),$$ 
$$(2, 9), (3, 5), (3, 6), (3, 7), (3, 9), (4, 7), (4, 8), (4, 9), (5, 8), (5, 9), (6, 9)]$$
    \item Size: 23; graph6 format: \verb"ICpvbqkN_"
$$[(0, 3), (0, 4), (0, 6), (0, 8), (1, 4), (1, 5), (1, 6), (1, 7), (2, 5), (2, 6), (2, 7), (2, 8),$$ 
$$(2, 9), (3, 5), (3, 6), (3, 7), (3, 9), (4, 7), (4, 8), (4, 9), (5, 8), (5, 9), (6, 9)]$$
\end{enumerate}

\end{document}